\renewcommand\bibfont\footnotesize
\par\addvspace{18pt}
\uppercase\expandafter{references}\par \nobreak \addvspace{11pt} \nobreak \par}}
\newcommand{\Omicron}{\mathrm{O}}
\newcommand{\assign}{:=}
\newcommand{\backassign}{=:}
\newcommand{\mathd}{\mathrm{d}}
\newcommand{\mathi}{\mathrm{i}}
\newcommand{\nosymbol}{}
\newcommand{\of}{:}
\newcommand{\tmem}[1]{{\em #1\/}}
\newcommand{\tmmathbf}[1]{\ensuremath{\boldsymbol{#1}}}
\newcommand{\tmop}[1]{\ensuremath{\operatorname{#1}}}
\newcommand{\tmscript}[1]{\text{\scriptsize{$#1$}}}
\newenvironment{enumeratenumeric}{\begin{enumerate}[1.] }{\end{enumerate}}
\newcommand{\minmaj}[1]{{}^{\sharp} #1}
\newcommand{\abs}[1]{\mathopen|#1\mathclose|}
\title{Rounding Error Analysis of Linear Recurrences\\
using Generating Series}
\author{Marc Mezzarobba\footnotemark[2]}
\shorttitle{Error analysis of recurrences using generating series}
\begin{document}

\maketitle

\begingroup
\renewcommand{\thefootnote}{\fnsymbol{footnote}}
\footnotetext[2]{%
  Sorbonne Université, CNRS, LIP6, F-75005 Paris, France;
  \emph{current address:} LIX, CNRS, École polytechnique, Institut polytechnique de Paris, 91120 Palaiseau, France
  (\texttt{\href{mailto:marc@mezzarobba.net}{marc@mezzarobba.net}})
}
\endgroup

\begingroup
\renewcommand{\thefootnote}{\fnsymbol{footnote}}
\footnotetext{%
This work is licensed under the Creative Commons Attribution 4.0 International License. To view a copy of this license, visit
\url{http://creativecommons.org/licenses/by/4.0/}
or send a letter to Creative Commons, PO Box 1866, Mountain View, CA 94042, USA.}
\endgroup

\begin{abstract}
  We develop a toolbox for the error analysis of linear recurrences with
  constant or polynomial coefficients, based on generating series, Cauchy's
  method of majorants, and simple results from analytic combinatorics. We
  illustrate the power of the approach by several nontrivial application
  examples. Among these examples are a new worst-case analysis of an algorithm
  for computing Bernoulli numbers, and a new algorithm for evaluating
  differentially finite functions in interval arithmetic while avoiding
  interval blow-up.
\end{abstract}

\begin{keywords}
    rounding error,
    rigorous computing,
    complex variable,
    majorant series,
    Bernoulli numbers,
    vibrating string,
    differentially finite function
\end{keywords}

\begin{AMS}
65G50; 65Q30; 65L70; 05A15
\end{AMS}

\section{Introduction}

This article aims to illustrate a technique for bounding round-off errors in
the floating-point evaluation of linear recurrence sequences that we found to
work well on a number of interesting examples. The main idea is to encode as
generating series both the sequence of ``local'' errors committed at each step
and that of ``global'' errors resulting from the accumulation of local errors.
While the resulting bounds are unlikely to be surprising to specialists,
generating series techniques, curiously, do not seem to be classical in this
context.

As is well-known, in the evaluation of a linear recurrence sequence, rounding
errors typically cancel out to a large extent instead of purely adding up. It
is crucial to take this phenomenon into account in the analysis in order to
obtain realistic (worst-case) bounds, which makes it necessary to study the propagation of
local errors in the following steps of the algorithm somewhat finely. In the
classical language of sequences, this tends to involve complicated
manipulations of nested sums and yield opaque expressions.

Generating series prove a convenient alternative for several reasons. Firstly,
they lead to more manageable formulae: convolutions become products, and the
relation between the local and the accumulated errors can often be expressed
exactly as an algebraic or differential equation involving their generating
series. Secondly, such an equation opens the door to powerful analytic
techniques like singularity analysis or Cauchy's method of majorants. Thirdly,
as illustrated in Section~\ref{sec:bernoulli}, a significant part of the
laborious calculations involved in obtaining explicit constants can be carried
out with the help of computer algebra systems when the calculation is
expressed using series.

In this article, we substantiate our claim that generating series are an
adequate language for error analysis by applying it to a selection of examples
from the literature.
Our focus is on true mathematical bounds (as opposed, in particular, to linearized bounds) on worst-case errors (with no assumptions on the distribution of individual rounding errors).
As detailed below, some of the examples yield results
that appear to be new and may be of independent interest.

The text is organized as follows. In order to get a concrete feeling of the
basic idea, we start in Section~\ref{sec:toy abs} with an elementary example,
postponing to Section~\ref{sec:related} the discussion of related work. We
continue in Sections \ref{sec:genseries}~to~\ref{sec:fp} with a review of
classical facts about generating series, asymptotics, the Cauchy majorant
method, and floating-point error analysis that constitute our basic toolbox.
Building on this background, we then illustrate the approach outlined in
Section~\ref{sec:toy abs} in situations involving polynomial coefficients
(Legendre polynomials, Section~\ref{sec:legendre}) and floating-point
arithmetic (with a revisit of the toy example in Section~\ref{sec:toy rel}). A
reader only interested in understanding the method can stop there.

The second half of the article presents more substantial applications of the same idea.
It consists of three sections that can be read independently, referring to Sections
\ref{sec:genseries}~to~\ref{sec:fp} for basic results as necessary.
Section~\ref{sec:bernoulli} answers a question of R.~P.~Brent and
P.~Zimmermann on the floating-point computation of Bernoulli numbers.
Section~\ref{sec:wave} discusses variations on Boldo's~{\cite{Boldo2009}}
error analysis of a finite difference scheme for the 1D wave equation, using
series with coefficients in a normed algebra to encode a bivariate recurrence.
Finally, in Section~\ref{sec:dfinite}, we take a slightly different
perspective and ask how to evaluate the sum of a series whose coefficients
satisfy a recurrence when the recurrence is part of the input. Under mild
assumptions, we give an algorithm that computes a rigorous enclosure of the
sum while avoiding the exponential blow-up that would come with a naive use of
interval arithmetic.

\section{A Toy Example}\label{sec:toy abs}

Our first example is borrowed from Boldo~{\cite[Section~2.1]{Boldo2009}} and
deals with the evaluation of a very simple, explicit linear recurrence
sequence with constant coefficients in a simple model of approximate
arithmetic. It is not hard to carry out the error analysis in classical
sequence notation, cf.~{\cite{Boldo2009}}, and the reader is encouraged to
duplicate the reasoning in his or her own favorite language.

Consider the sequence $(c_n)_{n \geqslant - 1}$ defined by the recurrence
\begin{equation}
  c_{n + 1} = 2 c_n - c_{n - 1} \label{eq:toy exact}
\end{equation}
with $c_{- 1} = 0$ and a certain initial value~$c_0$.
The exact solution is $c_n = (n + 1) c_0$.
Let us assume that we
are computing this sequence iteratively, so that each iteration generates a
small {\tmem{local error}} corresponding to the evaluation of the right-hand
side. We denote by $(\tilde{c}_n)$ the sequence of computed values. The local
errors accumulate over the course of the computation, and our goal is to bound
the {\tmem{global error}} $\delta_n = \tilde{c}_n - c_n$.

We assume that each arithmetic operation produces an error bounded by a fixed
quantity~$u$.
(This model is similar to fixed-point arithmetic, but our example is simplified to the point of being completely unrealistic:
in an actual fixed-point implementation,
since the coefficients on the right-hand side of~\eqref{eq:toy exact} are integers, this formula involves no rounding error at all.)
Thus, we have
\begin{equation}
  \tilde{c}_{n + 1} = 2 \tilde{c}_n - \tilde{c}_{n - 1} + \varepsilon_n,
  \qquad \abs{\varepsilon_n} \leqslant 2 u \label{eq:toy fxp}
\end{equation}
for all $n \geqslant 0$. We will also assume $\abs{\delta_0} \leqslant u$.
Subtracting \eqref{eq:toy exact} from \eqref{eq:toy fxp} yields
\begin{equation}
  \delta_{n + 1} = 2 \delta_n - \delta_{n - 1} + \varepsilon_n, \quad n
  \geqslant 0, \label{eq:toy rec err}
\end{equation}
with $\delta_{- 1} = 0$.

A naive forward error analysis would have us write $\abs{\delta_{n + 1}}
\leqslant 2 \abs{\delta_n} + \abs{\delta_{n - 1}} + 2 u$ and conclude by induction
that $\abs{\delta_n} \leqslant 3^{n + 1} u$, or, with a bit more effort,
\begin{equation}
  \abs{\delta_n} \leqslant (\lambda_+ \alpha_+^n + \lambda_- \alpha_-^n - 4) u,
  \qquad \alpha_{\pm} = 1 \pm \sqrt{2}, \quad \lambda_{\pm} = 4 \pm 3
  \sqrt{2} . \label{eq:toy fwd}
\end{equation}
Neither of these bounds is satisfactory. To see why, it may help to consider
the propagation of the first few rounding errors. Writing $\tilde{c}_0 = c_0 +
\delta_0$ and $\tilde{c}_1 = 2 \tilde{c}_0 + \varepsilon_0 = c_1 + 2 \delta_0
+ \varepsilon_0$, we have
\[ \tilde{c}_2 = 2 (2 c_0 + 2 \delta_0 + \varepsilon_0) - (c_0 + \delta_0) +
   \varepsilon_1 = c_2 + 3 \delta_0 + 2 \varepsilon_0 + \varepsilon_1 \]
and hence $\abs{\delta_2} \leqslant 9 u$. The naive analysis effectively puts
absolute values in this expression, leading to $\abs{\delta_2} \leqslant 5
\abs{\delta_0} + 2 \abs{\varepsilon_0} + \abs{\varepsilon_1} \leqslant 11 u$ instead.
Overestimations of this kind compound as $n$~increases. Somehow keeping track
of the expression of $\delta_n$ as a linear combination of the~$\varepsilon_i$
(and $\delta_0$) clearly should yield better estimates.

To do so, let us note that~\eqref{eq:toy rec err} is a linear recurrence with
the same homogeneous part as~\eqref{eq:toy exact} and the sequence of local
errors on the right-hand side, and rephrase this relation in terms of
generating series. Define the formal power series\footnote{While the sequence
$(\delta_n)$ naturally starts at $n = - 1$, the fact that~$\delta_{- 1} = 0$
allows us to use the same summation range for both series.}
\[ \delta (z) = \sum_{n \geqslant 0} \delta_n z^n, \qquad \varepsilon (z) =
   \sum_{n \geqslant 0} \varepsilon_n z^n . \]
The relation~\eqref{eq:toy rec err} implies
\[ (z^{- 1} - 2 + z) \delta (z) = \sum_{n \geqslant - 1} \delta_{n + 1} z^n -
   2 \sum_{n \geqslant 0} \delta_n z^n + \sum_{n \geqslant 1} \delta_{n - 1}
   z^n = \delta_0 z^{- 1} + \sum_{n \geqslant 0} \varepsilon_n z^n, \]
that is,
\begin{equation}\label{eq:toy abs delta}
  \delta (z) = \frac{\delta_0 + z \varepsilon (z)}{(1 - z)^2}.
\end{equation}
Since $\abs{\delta_0} \leqslant u$ and $\abs{\varepsilon_n} \leqslant 2 u$, we see
that the absolute values of the coefficients of the numerator are bounded by
those of the corresponding coefficients in the series expansion of $2 u / (1 -
z)$. Denoting by $\ll$ this termwise inequality relation, it follows that
\[ \delta (z) \ll \frac{2 u}{1 - z}  \frac{1}{(1 - z)^2} = \frac{2 u}{(1 -
   z)^3} . \]
Going back to the coefficient sequences, this bound translates into $|
\delta_n | \leqslant ({n + 1}) ({n + 2}) u$, a~much sharper result
than~\eqref{eq:toy fwd}.
This result is essentially optimal in our model, since the~$\varepsilon_n$ might all be equal to~$2u$ and \eqref{eq:toy abs delta} is an exact expression of the global error.

\section{Related Work}\label{sec:related}

There is a large body of literature on numerical aspects of linear recurrence
sequences, especially solutions of three-term recurrences.
The main focus is on on stability issues and backward recurrence
algorithms---algorithms where the recurrence relation is used for decreasing~$n$ and combined with
asymptotic information on the sequence, typically to compute minimal
solutions.
An important early example this nature is Olver's error analysis~\cite{Olver1964} of Miller's method for computing the minimal solution of a second-order recurrence.
We refer to Wimp's book~{\cite{Wimp1984}} for further references.

Here we only consider linear recurrences used in the
forward direction.
Comparatively little has been written on that subject, in Wimp's words, ``not because a forward algorithm is more
difficult to analyze, but rather for the opposite reason---that its analysis
was considered straightforward''~{\cite{Wimp1972}}. The first completely
explicit error analysis of general linear recurrences that we are aware of
appears in the work of Oliver~{\cite[Section~2]{Oliver1967}} (see
also~{\cite{Oliver1965}}).
However, the importance of using linearity to study
the propagation of local errors was recognized well before.
For example,
it is apparent in Clenshaw's discussion~\cite{Clenshaw1955} of his algorithm for computing partial sums of Chebyshev series,
and the
first of Henrici's books on numerical methods for differential
equations~{\cite[Section~1.4]{Henrici1962}} uses the terms ``local round-off
error'' and ``accumulated round-off error'' with the same meaning as we do.

In the same vein as Oliver's work,
Barrio, Melendo, and Serrano~\cite{BarrioMelendoSerrano2003}
analyze the floating-point evaluation of general linear recurrences of finite order.
Their result is a first-order bound, meaning that
terms of order~$\Omicron(u^2)$ where $u$~is the unit roundoff
are omitted.
Furthermore, due to its generality, the bound is complicated and expressed in terms of quantities that may be difficult to estimate.
We believe that the approach presented here offers at least a partial remedy to these limitations.
In more specific situations, though, readily exploitable bounds are available in the literature.
This includes in particular algorithms based on linear recurrences for evaluating finite generalized Fourier series, like Clenshaw's method
\cite[e.g.,][]{Elliott1968,Barrio2002}.

Linear recurrences can also be viewed as special cases of triangular systems of
linear equations. For example, computing the first $n$~terms of the sequence
$(c_n)$~of the previous section is the same as solving the banded Toeplitz
system
\begin{equation}\label{eq:linsys}
  \begin{bmatrix}
     1 &  &  &  & \\
     - 2 & 1 &  & \tmmathbf{0} & \\
     1 & - 2 & 1 &  & \\
     & \ddots & \ddots & \ddots & \\
     \tmmathbf{0} &  & 1 & - 2 & 1
   \end{bmatrix} \begin{bmatrix}
     x_0\\
     x_1\\
     x_2\\
     \vdots\\
     x_n
   \end{bmatrix} = \begin{bmatrix}
     c_0\\
     0\\
     0\\
     \vdots\\
     0
   \end{bmatrix}.
\end{equation}
The study of systems of this type is literally as old as error analysis:
the solution of triangular systems appears as an almost trivial subproblem in von Neumann and
Goldstine's {\cite{vonNeumannGoldstine1947}}\footnote{See also Grcar's
commentary~{\cite[Section 4.5]{Grcar2011a}}.} and (more explicitly) Turing's
{\cite[Section~12, p.~306]{Turing1948}} landmark analyses of linear system
solving, both concluding in a polynomial growth with~$n$ of the forward error
when some quantities related to the inverse or the condition number of the
matrix are fixed. We refer to the encyclopedic book by
Higham~{\cite[Chapter~8]{Higham2002}} for a detailed discussion of the error
analysis of triangular systems and further historical perspective.

Because of their dependency on condition numbers, these results do not, in
themselves, rule out an exponential buildup of errors in the case of
recurrences. In the standard modern proof, the forward error bound results
from the combination of a backward error bound and a perturbation analysis
that could in principle be refined to deal specifically with recurrences. An
issue with this approach is that, to view the numeric solution as the exact
solution corresponding to a perturbed input, one is led to perturb the matrix
in a fashion that destroys the structure inherited from the recurrence.
Experiments by
Barrio, Melendo, and Serrano~\cite{BarrioMelendoSerrano2003}
confirm that their bounds tend to be much sharper than bounds based on the condition number of systems of the type~\eqref{eq:linsys}.

It may nevertheless be the case that one can derive meaningful bounds for
recurrences from a refined variant of Theorem~8.5 in~{\cite{Higham2002}} better
taking into account the structure of the matrix.
Our claim is that
the tools of the present paper are better suited to the task. The use of
linearity to study error propagation can also be viewed as an instance of
backward error analysis, where one chooses to perturb the right-hand side of
the system instead of the matrix. From this perspective, the present paper is
about a convenient way of carrying out the perturbation analysis that enables
one to pass to a forward error bound.

Except for the earlier publication~{\cite{JohanssonMezzarobba2018}} of the
example considered again in Section~\ref{sec:legendre} below,
we are not aware of any prior example of error analysis conducted using
generating series in numerical analysis, scientific computing or computer
arithmetic. A~close analogue appears however in the realm of digital signal
processing, with the use of the Z\mbox{-}transform to study the propagation of
rounding errors in realization of digital filters starting with Liu and
Kaneko~{\cite{LiuKaneko1969}}. The focus in signal processing is rarely on
worst-case error bounds, with the notable exception of recent work by Hilaire
and collaborators {\cite[e.g.,][]{HilaireLopez2013}}.

\section{Generating Series}\label{sec:genseries}

Let $R$ be a ring, typically $R =\mathbb{R}$ or $R =\mathbb{C}$. We denote by
$R [[z]]$ the ring of formal power series
\[ u (z) = \sum_{n = 0}^{\infty} u_n z^n, \]
where $(u_n)_{n = 0}^{\infty}$ is an arbitrary sequence of elements of~$R$. A
series $u (z)$ used primarily as a convenient encoding of its coefficient
sequence~$(u_n)$ is called the {\tmem{generating series}} of~$(u_n)$.

It is often convenient to extend the coefficient sequence to negative indices
by setting $u_n = 0$ for $n < 0$. We can then write $u (z) = \sum_n u_n z^n$
with the implicit summation range extending from $- \infty$ to~$\infty$
(keeping in mind that the product of series of this form does not make sense
in general if the coefficients are allowed to take nonzero values for
arbitrary negative~$n$).

Given $u \in R [[z]]$ and $n \in \mathbb{Z}$, we denote by $u_n$ or $[z^n] u
(z)$ the coefficient of~$z^n$ in $u (z)$. Conversely, whenever $(u_n)$ is a
numeric sequence with integer indices, $u (z)$ is its generating series. We
occasionally consider sequences $u_0 (z), u_1 (z), \ldots$ of series, with
$u_{i, n} = [z^n] u_i (z)$ in this case. We often identify expressions
representing analytic functions with their series expansions at the origin.
For instance, $[z^n]  {(1 - \alpha z)^{- 1}}$ is the coefficient of~$z^n$ in the
Taylor expansion of $(1 - \alpha z)^{- 1}$ at~$0$, that is, $\alpha^n$.

We denote by~$S$ the forward shift operator mapping a sequence $(u_n)_{n \in \mathbb{Z}}$ to $(u_{n + 1})_{n
\in \mathbb{Z}}$, and by $S^{- 1}$ its inverse. Thus, $S \cdot (u_n)_{n \in
\mathbb{Z}}$ is the coefficient sequence of the series~$z^{- 1} u (z)$. More
generally, it is well-known that linear recurrence sequences with constant
coefficients correspond to rational functions in the realm of generating
series, as in the toy example from Section~\ref{sec:toy abs}.

It is also classical that the correspondence generalizes to recurrences with
variable coefficients depending polynomially on~$n$ as follows. We consider
recurrence relations
\begin{equation}\label{eq:rec}
  p_0 (n) u_n + p_1 (n) u_{n - 1} + \cdots + p_s (n) u_{n - s} = b_n, \quad n
  \in \mathbb{Z},
\end{equation}
where $p_0, \ldots, p_s \in R[X]$ are polynomials with $p_0 \neq 0$.
Given sequences expressed in terms
of an index called~$n$, we also denote by~$n$ the operator
\[ (u_n)_{n \in \mathbb{Z}} \quad \mapsto \quad (nu_n)_{n \in \mathbb{Z}} . \]
We then have $Sn = (n + 1) S$, where the product stands for the composition of
operators.

\begin{example}
  With these conventions,
  \[ (nS + n - 1) \cdot (u_n)_{n \in \mathbb{Z}} = (nu_{n + 1} + (n - 1)
     u_n)_{n \in \mathbb{Z}} = ((S + 1)  (n - 1)) \cdot (u_n)_{n \in
     \mathbb{Z}} \]
  is an equality of sequences that parallels the operator equality
  $nS + n - 1 = {(S + 1)}  (n - 1)$.
\end{example}

Any linear recurrence operator of finite order with polynomial coefficients
can thus be written as a polynomial in $n$ and $S^{\pm 1}$. Denoting with a
dot the action of operators on sequences, \eqref{eq:rec} thus rewrites as
\[ L (n, S^{- 1}) \cdot (u_n) = (b_n) \quad \text{where} \quad L = \sum_{k =
   0}^s p_k (X) Y^k \in R [X] [Y] . \]
When dealing with sequences that vanish eventually (or that converge fast
enough) as $n \rightarrow - \infty$, we can also consider operators of
infinite order
\[ p_0 (n) + p_1 (n) S^{- 1} + p_2 (n) S^{- 2} + \cdots = \sum_{i =
   0}^{\infty} p_i (n) S^{- i} = L (n, S^{- 1}) \]
where $L \in R [X] [[Y]]$.
\pagebreak[1]

In the same way as with recurrences, we view the multiplication by~$z$ of elements of $R [[z]]$ as a
linear operator that can be combined with the differentiation operator $\mathd
/ \mathd z$ to form linear differential operators with polynomial or
series coefficients. For example, we have
\[ \left( \frac{\mathd}{\mathd z}  \frac{1}{1 - z} \right) \cdot u (z) =
   \frac{\mathd}{\mathd z} \cdot \frac{u (z)}{1 - z} = \left( \frac{1}{1 - z} 
   \frac{\mathd}{\mathd z} + \frac{1}{(1 - z)^2} \right) \cdot u (z) \]
where the rational functions are to be interpreted as power series.

\begin{lemma}
  \label{lem:recdeq}Let $(u_n)_{n \in \mathbb{Z}}, (v_n)_{n \in \mathbb{Z}}$
  be sequences of elements of~$R$, with $u_n = v_n = 0$ for $n < 0$. Consider
  a recurrence operator of the form $L (n, S^{- 1})$ with $L (X, Y) \in R [X]
  [[Y]]$. The sequences $(u_n), (v_n)$ are related by the recurrence relation
  $L (n, S^{- 1}) \cdot (u_n) = (v_n)$ if and only if their generating series
  satisfy the differential equation
  \[ L \left( z \frac{\mathd}{\mathd z}, z \right) \cdot u (z) = v (z) . \]
\end{lemma}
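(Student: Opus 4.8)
The plan is to reduce the equivalence to one elementary fact: under the correspondence between sequences and generating series, the operators $n$ and $S^{-1}$ acting on sequences match, respectively, the operators $z\,\mathd/\mathd z$ and multiplication by~$z$ acting on power series. Everything else is bookkeeping through the polynomial-in-$n$, series-in-$S^{-1}$ structure of~$L$.

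First I would note that $(u_n)_{n\in\mathbb Z}\mapsto u(z)=\sum_{n\geqslant 0}u_nz^n$ is an $R$-linear bijection between the set of sequences with $u_n=0$ for $n<0$ and $R[[z]]$; so it suffices to show that the generating series of $L(n,S^{-1})\cdot(u_n)$ is $L(z\,\mathd/\mathd z,z)\cdot u(z)$, the claimed equivalence following by also encoding $(v_n)$ as $v(z)$ (the vanishing of $(v_n)$ for $n<0$ ensuring $v(z)$ faithfully encodes it, so the series identity can be read back as the sequence identity). Next I would establish the two generator correspondences. For $S^{-1}$: the coefficient sequence of $z^iu(z)$ is $(u_{n-i})_n=S^{-i}\cdot(u_n)$, as already observed in the text. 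For~$n$: since applying $z\,\mathd/\mathd z$ to $\sum_n a_nz^n$ yields $\sum_n na_nz^n$, an immediate induction shows that $(z\,\mathd/\mathd z)^j$ multiplies the coefficient of~$z^n$ by~$n^j$, whence by $R$-linearity $[z^n]\bigl(p(z\,\mathd/\mathd z)\cdot w(z)\bigr)=p(n)\,[z^n]w(z)$ for every polynomial $p\in R[X]$ and every $w\in R[[z]]$.

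Combining the two, the coefficient of~$z^n$ in $p_k(z\,\mathd/\mathd z)\cdot\bigl(z^ku(z)\bigr)$ equals $p_k(n)\,u_{n-k}$, which is exactly the $n$-th term of the sequence $p_k(n)\,S^{-k}\cdot(u_n)$. Here I would be explicit about the operator-ordering convention implicit on both sides: writing $L=\sum_k p_k(X)Y^k$ with the polynomial factor to the left of the power of~$Y$, the substitutions $L(n,S^{-1})=\sum_k p_k(n)\,S^{-k}$ and $L(z\,\mathd/\mathd z,z)=\sum_k p_k(z\,\mathd/\mathd z)\,z^k$ preserve the ``polynomial on the left'' order, which matters because $n$ and $S^{-1}$ (like $z\,\mathd/\mathd z$ and multiplication by~$z$) do not commute. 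Summing over $k\geqslant 0$ then gives
\[ [z^n]\Bigl(L\bigl(z\,\mathd/\mathd z,\,z\bigr)\cdot u(z)\Bigr)=\sum_{k\geqslant 0}p_k(n)\,u_{n-k}, \]
and the right-hand side is, by definition of~$L(n,S^{-1})$, the $n$-th term of $L(n,S^{-1})\cdot(u_n)$; comparing with $v(z)$ and $(v_n)$ finishes the argument.

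The one point requiring care — and the main, though mild, obstacle — is the legitimacy of the sum over~$k$ when $L$ genuinely has infinite order (is a power series, not a polynomial, in~$Y$). Here I would invoke the hypothesis $u_n=0$ for $n<0$: for a fixed~$n$, only the indices $k\leqslant n$ contribute to $\sum_{k}p_k(n)\,u_{n-k}$, and, on the series side, $z^ku(z)$ affects only coefficients of index~$\geqslant k$; hence both $L(n,S^{-1})\cdot(u_n)$ and $L(z\,\mathd/\mathd z,z)\cdot u(z)$ are well defined and the extraction of any given coefficient commutes with the summation over~$k$. With that in place, the term-by-term matching above is all that remains.
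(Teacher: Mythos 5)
Your proof is correct and follows essentially the same route as the paper's: it rests on the two basic correspondences $n \leftrightarrow z\,\mathd/\mathd z$ and $S^{-1} \leftrightarrow z$, extended through the polynomial-in-$n$, series-in-$S^{-1}$ structure of $L$, with the vanishing of $(u_n)$ for $n<0$ guaranteeing that the infinite-order sum is well defined. You simply spell out the bookkeeping (in particular the operator-ordering convention, with $p_k$ to the left of the power of $Y$) that the paper leaves implicit.
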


\begin{proof}
  This follows from the relations
  \[ \sum_{n = - \infty}^{\infty} nf_n z^n = z \frac{\mathd}{\mathd z} \sum_{n
     = - \infty}^{\infty} f_n z^n, \qquad \sum_{n = - \infty}^{\infty} f_{n -
     1} z^n = z \sum_{n = - \infty}^{\infty} f_n z^n, \]
  noting that the operators of infinite order with respect to $S^{- 1}$ that
  may appear when the coefficients of the differential equation are series are
  applied to sequences that vanish for negative~$n$.
\end{proof}

Generating series of sequences satisfying recurrences of the
form~\eqref{eq:rec}---in other words, by Lemma~\ref{lem:recdeq}, formal series
solutions of linear differential equations with polynomial coefficients---are
called {\tmem{differentially finite}} or {\tmem{holonomic}}. We refer the
reader to~{\cite{Kauers2013,Salvy2019}} for an overview of the powerful
techniques available to manipulate these series and their generalizations to
several variables.

\section{Asymptotics}

One of the main appeals of generating series is the access they give
to the {\tmem{asymptotics}} of the corresponding sequences. The basic
fact here is simply the Cauchy-Hadamard theorem stating that the inverse of
the radius of convergence of $u (z)$ is the limit superior of $\abs{u_n}^{1 /
n}$ as $n \rightarrow \infty$. Concretely, as soon as we have an expression of
$u (z)$ (or an equation satisfied by it) that makes it clear that it has a
positive radius of convergence and where the complex singularities of the
corresponding analytic function are located, the exponential growth order
of~$\abs{u_n}$ follows immediately.

Much more precise results are available when more is known about the nature of
singularities. We quote here a simple result of this kind that will be enough
for our purposes, and refer to the book by Flajolet and
Sedgewick~{\cite{FlajoletSedgewick2009}} for far-ranging generalizations (see
in particular {\cite[Corollary~VI.1, p.~392]{FlajoletSedgewick2009}} for a
statement containing the following lemma as a special case).

\begin{lemma}
  \label{lem:asympt}Assume that, for some $\rho > 0$, the series $u (z) =
  \sum_n u_n z^n$ converges for $\abs{z} < \rho$ and that its sum has a single
  singularity $\alpha \in \mathbb{C}$ with $\abs{\alpha} = \rho$. Let $\Omega$
  denote a disk of radius $\rho' > \rho$, slit along the ray $\{ t \alpha \of
  t \in [\rho, \rho'] \}$, and assume that $u (z)$ extends analytically
  to~$\Omega$. If for some $C \in \mathbb{C}$ and $m \in
  \mathbb{C}\backslash\mathbb{Z}_{\leqslant 0}$, one has
  \[ u (z) \sim \frac{C}{(1 - \alpha^{- 1} z)^m} \]
  as $z \rightarrow \alpha$ from within~$\Omega$, then the corresponding
  coefficient sequence satisfies
  \[ u_n \sim \frac{C}{\Gamma (m)} n^{m - 1} \alpha^{- n} \]
  as $n \rightarrow \infty$,
  where $\Gamma$~is the Euler Gamma function.
\end{lemma}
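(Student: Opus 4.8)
The plan is to deduce the asymptotic estimate for $u_n$ by a classical singularity analysis argument: subtract off the singular part, control the remainder by a contour integral on a Hankel-type path, and recognize the main term as the coefficient asymptotics of $(1-\alpha^{-1}z)^{-m}$, which are known in closed form.

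First I would reduce to the case $\alpha = 1$ by the substitution $z \mapsto \alpha z$, which sends $u(z)$ to $\tilde u(z) = u(\alpha z)$ with coefficients $\tilde u_n = \alpha^n u_n$, turns the singularity into $1$, and turns the slit disk $\Omega$ into a standard Delta-domain (a disk of radius $\rho'/|\alpha| > 1$ slit along $[1,\rho'/|\alpha|]$). So it suffices to prove that if $u(z)$ is analytic in such a Delta-domain $\Delta$ and $u(z) \sim C(1-z)^{-m}$ as $z \to 1$ in $\Delta$, then $u_n \sim \frac{C}{\Gamma(m)} n^{m-1}$. Next I would record the benchmark estimate $[z^n](1-z)^{-m} = \binom{n+m-1}{n} = \frac{\Gamma(n+m)}{\Gamma(n+1)\Gamma(m)} \sim \frac{n^{m-1}}{\Gamma(m)}$, valid for $m \in \mathbb{C}\setminus\mathbb{Z}_{\leqslant 0}$; for noninteger $m$ this is the binomial series, and the asymptotic equivalence follows from Stirling's formula for the Gamma function (this is where the hypothesis $m \notin \mathbb{Z}_{\leqslant 0}$ is used, so that $\Gamma(m)$ is finite and nonzero).

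The core step is to show that the error series $r(z) = u(z) - C(1-z)^{-m}$ has coefficients $r_n = o(n^{\operatorname{Re} m - 1})$. Here I would extract $r_n$ by Cauchy's formula $r_n = \frac{1}{2\pi\mathi}\oint r(z) z^{-n-1}\,\mathd z$ and deform the circular contour into a Hankel contour $\gamma$ inside $\Delta$: two rays hugging the slit from $1 + 1/n$ out to near the boundary circle of radius $\rho'$ (strictly a bit less), joined by a small circle of radius $1/n$ around $z=1$ and closed off along an arc of the outer circle. On the outer-circle arc, $|z^{-n-1}|$ decays like $(\rho'')^{-n}$ for some $\rho'' > 1$, so that part is exponentially negligible. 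On the two rays and the small circle, one uses the hypothesis $r(z) = o((1-z)^{-m})$ as $z \to 1$: after scaling $z = 1 + t/n$, the integral over the Hankel part is, up to the $o(\cdot)$, a scaled copy of the Hankel-contour integral for $(1-z)^{-m}$ itself, which by Hankel's formula for $1/\Gamma$ evaluates to something of order $n^{m-1}$; the $o(1)$ factor from the hypothesis then makes it $o(n^{\operatorname{Re} m - 1})$. Combining, $r_n = o(n^{\operatorname{Re} m - 1})$, and since $C(1-z)^{-m}$ contributes exactly $\frac{C}{\Gamma(m)} n^{m-1}(1+o(1))$, we get $u_n \sim \frac{C}{\Gamma(m)} n^{m-1}$, and undoing the rescaling restores the factor $\alpha^{-n}$.

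The main obstacle is making the contour-deformation/Hankel estimate fully rigorous with the given hypotheses: one must (i) justify that the scaled Hankel integral for $(1-z)^{-m}$ really does converge to the Hankel representation of $1/\Gamma(m)$ as $n \to \infty$ — a uniformity argument that requires splitting the contour at a fixed distance from $1$ and handling the ``near'' part (where the $o(1)$ from the hypothesis helps) separately from the ``far'' part (where $(1-z)^{-m}$ is bounded and $z^{-n-1}$ is exponentially small) — and (ii) check that all the $o(\cdot)$'s are uniform enough over the relevant portions of the contour, which is exactly the content of the notion of a Delta-analytic ``singularity of a standard type.'' Since the excerpt permits us to invoke Corollary~VI.1 of~\cite{FlajoletSedgewick2009}, the cleanest exposition is to simply verify that the hypotheses of that corollary are met — analyticity in a Delta-domain and the stated asymptotic equivalence near $\alpha$ — after the normalization $\alpha = 1$, and to cite it, rather than to reproduce the Hankel-contour computation in full.
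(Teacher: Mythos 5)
The paper does not actually supply a proof of this lemma: it is quoted as a known result, with the text referring to Corollary~VI.1 of Flajolet and Sedgewick's book for (a generalization of) the statement. Your sketch of the normalization to $\alpha=1$, the benchmark coefficient asymptotics of $(1-z)^{-m}$, and the Hankel-contour estimate for the $o$-term is precisely the argument behind that corollary, and your closing observation that the cleanest path is to verify the hypotheses and cite Corollary~VI.1 matches the paper's own treatment.
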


\section{Majorant Series}\label{sec:maj}

While access to identities of sequences and to their asymptotic behavior is
important for error analysis, we are primarily interested in
{\tmem{inequalities}}. A natural way to express bounds on sequences encoded by
generating series is by majorant series, a classical idea of ``19th century''
analysis.

\begin{definition}
  \label{def:majorant}Let $f = \sum_n f_n z^n \in \mathbb{C} [[z]]$.
  \begin{enumerate}[(a)]
  \item A formal series with nonnegative coefficients
  $\hat{f} = \sum_n \hat{f}_n z \in \mathbb{R}_{\geqslant 0} [[z]]$ is said to
  be a {\tmem{majorant series}} of $f$ when we have $\abs{f_n} \leqslant
  \hat{f}_n$ for all~$n \in \mathbb{N}$. We then write $f \ll \hat{f}$.
  
  \item We denote by $\minmaj{f}$ the minimal majorant series of~$f$, that is,
  $\minmaj{f} = \sum_n \abs{f_n} z^n$.
  \end{enumerate}
\end{definition}

We also write $f \gg 0$ to indicate simply that $f$ has real, nonnegative
coefficients. Series denoted with a hat always have nonnegative coefficients,
and $\hat{f}$ is typically some kind of bound on~$f$, though not necessarily a
majorant series in the sense of the above definition.
While, for simplicity, we limit ourselves here to $f \in \mathbb{C}[[z]]$,
one can extend the definition to series~$f$ with coefficients in a normed algebra
(see Section~\ref{sec:wave}).

The following properties are classical and easy to check (see, e.g.,
Hille~{\cite[Section~2.4]{Hille1976}}).

\begin{lemma}
  \label{lem:maj-series}Let $f, g \in \mathbb{C} [[z]]$, $\hat{f}, \hat{g} \in
  \mathbb{R}_{\geqslant 0} [[z]]$ be such that $f \ll \hat{f}$ and $g \ll
  \hat{g}$.
  \begin{enumerate}
    \item \label{item:maj-series:basic}The following assertions hold, where
    $f_{N :} (z) = \sum_{n \geqslant N} f_n z^n$:
    \[ \begin{array}{lll}
         \text{(a)} \quad f + g \ll \hat{f} + \hat{g}, & \text{(b)} \quad
         \gamma f \ll \abs{\gamma}  \hat{f} \: \text{for $\gamma \in \mathbb{C}$},
         & \\
         \text{(c)} \quad f_{N :} (z) \ll \hat{f}_{N :} (z) \: \text{for $N \in
         \mathbb{N}$}, & \text{(d)} \quad f' (z) \ll \hat{f}' (z), & \\
         \text{(e)} \quad \left( \int_0^z f \right) \ll \left( \int_0^z
         \hat{f} \right), & \text{(f)} \quad fg \ll \hat{f}  \hat{g} . & 
       \end{array} \]
    \item \label{item:maj-series:compose}The disk of convergence~$\hat{D}$ of
    $\hat{f}$ is contained in that of~$f$, and when $\hat{g}_0 \in \hat{D}$,
    we have $f (g (z)) \ll \hat{f} (\hat{g} (z))$. In particular, $\abs{f (\zeta)
   }$ is bounded by $\hat{f} (\abs{\zeta})$ for all $\zeta \in \hat{D}$.
  \end{enumerate}
\end{lemma}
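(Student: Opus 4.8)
The plan is to verify each assertion directly on coefficients, using that $\ll$ is by definition a coefficientwise relation, and then to obtain the composition statement in part~\ref{item:maj-series:compose} by bootstrapping from the product rule~(f). Everything rests on the triangle inequality applied to the explicit formulas for the coefficients of the series involved.

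For the assertions of part~\ref{item:maj-series:basic}, I would dispatch (a)--(e) in one stroke by writing out the relevant coefficients: $[z^n](f+g) = f_n+g_n$, $[z^n](\gamma f) = \gamma f_n$, $[z^n] f'(z) = (n+1)f_{n+1}$, and $[z^n]\int_0^z f = f_{n-1}/n$ (with the obvious convention at $n=0$). In each case $|f_n|\leqslant\hat f_n$ and $|\gamma f_n| = |\gamma|\,|f_n|$ give the claim, while (c) is merely the observation that discarding some of the valid inequalities $|f_n|\leqslant\hat f_n$ leaves valid inequalities. For (f), the Cauchy product gives $[z^n](fg) = \sum_{k=0}^{n} f_k g_{n-k}$, whence $\bigl|[z^n](fg)\bigr| \leqslant \sum_{k=0}^{n} |f_k|\,|g_{n-k}| \leqslant \sum_{k=0}^{n} \hat f_k \hat g_{n-k} = [z^n](\hat f\hat g)$, the last equality using that $\hat f$ and $\hat g$ have nonnegative coefficients.

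For part~\ref{item:maj-series:compose}, I would first note that $|f_n|^{1/n}\leqslant\hat f_n^{1/n}$ forces $\limsup_n |f_n|^{1/n}\leqslant\limsup_n \hat f_n^{1/n}$, so by the Cauchy--Hadamard theorem the radius of convergence of $f$ is at least that of $\hat f$; hence $\hat D\subseteq D$, where $D$ is the disk of convergence of $f$. Then, iterating~(f) yields $g(z)^k\ll\hat g(z)^k$, and combining with~(b) gives $f_k\,g(z)^k\ll\hat f_k\,\hat g(z)^k$ for each $k$. Since $\hat g(0)=\hat g_0\in\hat D$, the function $\hat f\circ\hat g$ is analytic near $0$ and its Taylor coefficients are $[z^n]\hat f(\hat g(z)) = \sum_k \hat f_k\,[z^n]\hat g(z)^k < \infty$; this simultaneously shows that the formal composition $f(g(z)) = \sum_k f_k\,g(z)^k$ has well-defined coefficients and that, taking absolute values termwise, $\bigl|[z^n] f(g(z))\bigr| \leqslant \sum_k \hat f_k\,[z^n]\hat g(z)^k = [z^n]\hat f(\hat g(z))$, i.e. $f(g(z))\ll\hat f(\hat g(z))$. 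The final ``in particular'' statement is the special case where $g$ and $\hat g$ are the constant series $\zeta$ and $|\zeta|$, read off from the constant term.

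The one point I would treat with care is the well-definedness of the composition when $\hat g_0\neq 0$ (equivalently $g_0\neq 0$): then infinitely many of the series $g(z)^k$ contribute to every coefficient of $f(g(z))$, so one genuinely needs the hypothesis $\hat g_0\in\hat D$ --- not merely $g_0\in D$ --- to guarantee absolute convergence of the series of $n$-th coefficients simultaneously with the majorization. Apart from that, the proof is a mechanical chain of triangle-inequality estimates, and nothing deeper is required; I would likely just cite Hille for the routine parts and only spell out the composition argument.
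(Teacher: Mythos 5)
Your proof is correct. The paper does not give its own proof of this lemma, declaring the assertions ``classical and easy to check'' and citing Hille~\cite[Section~2.4]{Hille1976}; your coefficientwise verification via the triangle inequality, with the bootstrapping of composition from product and the careful note about why $\hat{g}_0 \in \hat{D}$ (rather than just $g_0 \in D$) is the right hypothesis for the formal composition to be well defined, is precisely the standard argument that reference contains.
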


While majorant series are a concise way to express some types of inequalities
between sequences, their true power comes from Cauchy's {\tmem{method of
majorants}}~{\cite{Cauchy1841,Cauchy1842}}\footnote{See Cooke~{\cite{Cooke}}
for an interesting account of the history of this method and its extensions,
culminating in the Cauchy-Kovalevskaya theorem on partial differential
equations.}. This method is a way of computing majorant series of solutions of
functional equations that reduce to fixed-point equations. The idea is that
when the terms of a series solutions can be determined iteratively from the
previous ones, it is often possible to ``bound'' the equation by a simpler
``model equation'' whose solutions (with suitable initial values) then
automatically majorize those of the original equation.

A very simple result of this kind states that the solution~$y$ of a
{\tmem{linear}} equation $y = ay + b$ is bounded by the solution~$\hat{y}$ of
$\hat{y} = \hat{a}  \hat{y} + \hat{b}$ when $a \ll \hat{a}$, $b \ll \hat{b}$
and $\hat{a}_0 = 0$. Let us prove a variant of this fact. The previous
statement follows by applying the lemma to~$\minmaj{y}$.

\begin{lemma}
  \label{lem:maj linear}Let $\hat{a}, \hat{b}, y \in \mathbb{R}_{\geqslant 0}
  [[z]]$ be power series with $\hat{a}_0 = 0$ such that (note the $\ll$ sign)
  \[ y (z) \ll \hat{a} (z) y (z) + \hat{b} (z) . \]
  Then one has
  \[ y (z) \ll \hat{y} (z) \assign \frac{\hat{b} (z)}{1 - \hat{a} (z)} . \]
\end{lemma}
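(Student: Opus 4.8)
The plan is to bound the coefficients $y_n$ by induction on $n$, comparing them against the coefficients $\hat{y}_n$ of the candidate majorant $\hat{y} = \hat{b}/(1-\hat{a})$. First I would record the two facts that make the argument go through. On the one hand, expanding $\hat{y}(1-\hat{a}) = \hat{b}$ gives $\hat{y} = \hat{a}\hat{y} + \hat{b}$ as an identity of series in $\mathbb{R}_{\geqslant 0}[[z]]$ (legitimate because $\hat{a}_0 = 0$, so $1-\hat{a}$ is invertible and $\hat{y}$ has nonnegative coefficients); in particular $\hat{y}_n = \sum_{k=0}^{n} \hat{a}_k \hat{y}_{n-k} + \hat{b}_n$. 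On the other hand, the hypothesis $y \ll \hat{a} y + \hat{b}$ unwinds coefficientwise to $y_n \leqslant \sum_{k=0}^{n} \hat{a}_k\, y_{n-k} + \hat{b}_n$ for every $n$ (here $y \gg 0$, so $|y_n| = y_n$).

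Next I would run the induction. The base case $n=0$: since $\hat{a}_0 = 0$, both relations reduce to $y_0 \leqslant \hat{b}_0 = \hat{y}_0$. For the inductive step, assume $y_m \leqslant \hat{y}_m$ for all $m < n$. In the sum $\sum_{k=0}^{n} \hat{a}_k\, y_{n-k}$, the term $k=0$ vanishes because $\hat{a}_0 = 0$, so every surviving term has $n-k < n$ and we may apply the induction hypothesis together with $\hat{a}_k \geqslant 0$ to get $\sum_{k=0}^{n}\hat{a}_k\, y_{n-k} \leqslant \sum_{k=0}^{n}\hat{a}_k\,\hat{y}_{n-k}$. Adding $\hat{b}_n$ and using the two displayed relations yields $y_n \leqslant \hat{y}_n$, closing the induction and hence proving $y \ll \hat{y}$.

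The only place that needs care — and what I expect to be the main (minor) obstacle — is the vanishing of the $k=0$ term: it is exactly the hypothesis $\hat{a}_0 = 0$ that both makes $\hat{y} = \hat{b}/(1-\hat{a})$ a well-defined element of $\mathbb{R}_{\geqslant 0}[[z]]$ and decouples $y_n$ from itself in the convolution, so that the induction has something to bite on. Everything else is routine coefficient bookkeeping, and one could alternatively phrase the whole thing as a fixed-point statement: the map $\Phi(w) = \hat{a}w + \hat{b}$ is monotone for $\ll$ among series with nonnegative coefficients, is a contraction in the $z$-adic topology since $\hat{a}_0 = 0$, has unique fixed point $\hat{y}$, and the hypothesis says $\minmaj{y} \ll \Phi(\minmaj{y})$; iterating $\Phi$ and passing to the limit gives $\minmaj{y} \ll \hat{y}$. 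I would present the explicit induction, as it is shortest.
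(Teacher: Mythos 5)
Your proof is correct and is essentially the same as the paper's: both extract the coefficient of $z^n$ from the hypothesis and from the defining relation $\hat{y} = \hat{a}\hat{y} + \hat{b}$, and both run an induction on $n$ that goes through precisely because $\hat{a}_0 = 0$ makes the term with index $n$ drop out of the convolution. The fixed-point reformulation you sketch at the end is a nice aside but, as you note, the coefficientwise induction is the shortest route and is what the paper does.
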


\begin{proof}
  Extracting the coefficient of $z^n$ on both sides of the inequality on~$y
  (z)$ yields
  \begin{equation}
    \abs{y_n} \leqslant \sum_{i = 1}^n \hat{a}_i  \abs{y_{n - i}} + \hat{b}_n,
    \label{eq:maj linear rec}
  \end{equation}
  where the sum starts at $i = 1$ due to the assumption that~$\hat{a}_0 = 0$.
  Similarly, the $\hat{y}_n$ satisfy
  \begin{equation}
    \hat{y}_n = \sum_{i = 1}^n \hat{a}_i  \hat{y}_{n - i} + \hat{b}_n .
    \label{eq:maj linear maj rec}
  \end{equation}
  We see by comparing \eqref{eq:maj linear rec}~and~\eqref{eq:maj linear maj
  rec} that $\abs{y_k} \leqslant \hat{y}_k$ for all $k < n$ implies $\abs{y_n}
  \leqslant \hat{y}_n$ (including the trivial case $\abs{y_0} \leqslant
  \hat{b}_0 = \hat{y}_0$) so that, by induction, one has $\abs{y_n} \leqslant
  \hat{y}_n$ for all~$n$.
\end{proof}

Another classical instance of the method applies to nonsingular linear
differential equations with analytic coefficients. In combination with
Lemma~\ref{lem:recdeq} above, it allows us to derive bounds on linear
recurrence sequences with polynomial coefficients.
Note that, since the correspondence described in Lemma~\ref{lem:recdeq}
maps~$S$ to~$z^{-1}$, Proposition~\ref{prop:maj deq} covers the case of
recurrences of infinite order.

\begin{proposition}
  \label{prop:maj deq}Let $a_0, \ldots, a_{r - 1}, b \in \mathbb{C} [[z]]$,
  $\hat{a}_0, \ldots, \hat{a}_{r - 1}, \hat{b} \in \mathbb{R}_{\geqslant 0}
  [[z]]$ be such that $a_k \ll \hat{a}_k$ for $0 \leqslant k < r$ and $b \ll
  \hat{b}$. Assume that $\hat{y} \in \mathbb{R} [[z]]$ is a solution of the
  equation
  \begin{equation}
    \hat{y}^{(r)} (z) - \hat{a}_{r - 1} (z)  \hat{y}^{(r - 1)} (z) - \cdots -
    \hat{a}_1 (z)  \hat{y}' (z) - \hat{a}_0 (z)  \hat{y} (z) = \hat{b} (z) .
    \label{eq:maj diff eq}
  \end{equation}
  Then, any solution~$y \in \mathbb{C} [[z]]$ of
  \begin{equation}
    y^{(r)} (z) - a_{r - 1} (z) y^{(r - 1)} (z) - \cdots - a_1 (z) y' (z) -
    a_0 (z) y (z) = b (z) \label{eq:diff eq}
  \end{equation}
  with $\abs{y_0} \leqslant \hat y_0, \ldots, \abs{y_{r - 1}} \leqslant
  \hat{y}_{r - 1}$ satisfies $y \ll \hat{y}$.
\end{proposition}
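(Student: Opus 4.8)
The plan is to reduce the differential equation to a fixed-point equation to which Lemma~\ref{lem:maj linear} applies, after first turning the $r$-th order equation into something first-order-like by integrating $r$ times. Concretely, I would rewrite \eqref{eq:diff eq} by moving all terms but $y^{(r)}$ to the right-hand side, so that $y^{(r)} = \sum_{k=0}^{r-1} a_k y^{(k)} + b$, and then integrate $r$ times from~$0$, using the initial data $y_0,\dots,y_{r-1}$ to pin down the constants of integration. Each repeated integral $\int_0^z\cdots\int_0^z a_k(z) y^{(k)}(z)$ can be rewritten, via integration by parts (the Cauchy formula for iterated integrals), as a finite sum of terms of the form $(\text{explicit polynomial or iterated integral of } a_k)\cdot y$, plus lower-order integrated pieces; the upshot is an equation $y = \Phi\cdot y + \psi$ where $\Phi \in \mathbb{C}[[z]]$ has $\Phi_0 = 0$ (every surviving operator has been integrated at least once, or carries a factor that vanishes at $0$) and $\psi \in \mathbb{C}[[z]]$ collects the contributions of $b$ and of the initial values.

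The key point is to carry out the \emph{same} manipulation on the hatted equation \eqref{eq:maj diff eq}, obtaining $\hat y = \hat\Phi\cdot\hat y + \hat\psi$ with $\hat\Phi,\hat\psi\gg 0$ and $\hat\Phi_0 = 0$, in such a way that the majorant relations $a_k \ll \hat a_k$, $b \ll \hat b$, and $|y_j|\leqslant \hat y_j$ propagate through the construction to give $\Phi \ll \hat\Phi$ and $\psi \ll \hat\psi$. This uses only the elementary closure properties of $\ll$ from Lemma~\ref{lem:maj-series}\eqref{item:maj-series:basic}: stability under sums, scalar multiplication, products, and integration $\int_0^z$. In particular $\widehat{y^{(k)}} \ll \hat y^{(k)}$ by part~(d) applied $k$ times, so the monomials $z^j$ coming from the initial data are controlled by $|y_j|\leqslant \hat y_j$. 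Since $\hat y$ solves $\hat y = \hat\Phi \hat y + \hat\psi$ exactly, it equals $\hat\psi/(1-\hat\Phi)$, and we have $y \ll \minmaj{y}$ with $\minmaj{y} \ll \hat\Phi\,\minmaj{y} + \hat\psi \ll \hat\Phi\,\hat y + \hat\psi = \hat y$ — wait, more carefully: from $y = \Phi y + \psi$ and the majorant bounds one gets ${}^{\sharp}y \ll \hat\Phi\,{}^{\sharp}y + \hat\psi$, and then Lemma~\ref{lem:maj linear} (with $\hat a = \hat\Phi$, $\hat b = \hat\psi$, which is legitimate because $\hat\Phi_0 = 0$) yields ${}^{\sharp}y \ll \hat\psi/(1-\hat\Phi) = \hat y$, hence $y \ll \hat y$.

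The main obstacle I anticipate is bookkeeping rather than conceptual: writing the iterated integrals $\int_0^z\!\!\cdots\!\int_0^z a_k(t)\,y^{(k)}(t)$ in a form where the coefficient of $y$ is visibly a series vanishing at the origin, and simultaneously a majorant of the corresponding hatted quantity. Integration by parts does this but generates a proliferation of boundary terms and lower-order integrated kernels; one must be careful that \emph{all} of them either acquire a positive power of $z$ or are scalar multiples of the (bounded) low-order coefficients $y_0,\dots,y_{r-1}$. An alternative that sidesteps the messiest part of the calculation is to argue coefficient-by-coefficient directly: the recurrence obtained by extracting $[z^n]$ from \eqref{eq:diff eq} expresses $(n)(n-1)\cdots(n-r+1)\,y_n$ — equivalently, for $n\geqslant r$, $y_n$ — as a fixed linear combination, with coefficients built from the $a_{k,i}$, of $y_0,\dots,y_{n-1}$ and $b_{n-r}$; extracting $[z^n]$ from \eqref{eq:maj diff eq} gives the same recurrence with everything replaced by its hatted counterpart; and then a single induction on~$n$, comparing the two recurrences term by term exactly as in the proof of Lemma~\ref{lem:maj linear}, gives $|y_n|\leqslant\hat y_n$. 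I would present whichever of these is shorter; the induction version has the advantage of needing no integration-by-parts gymnastics, only the triangle inequality and the hypotheses $a_k\ll\hat a_k$, $b\ll\hat b$, $|y_j|\leqslant\hat y_j$.
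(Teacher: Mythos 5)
Your second route—extracting $[z^n]$ from both equations and inducting on $n$ exactly as in Lemma~\ref{lem:maj linear}—is precisely the paper's proof; present that one. Be sure to make explicit the one point your sketch leaves implicit: after the index shift, the coefficient of $y_{n-k'-j}$ in the recurrence is $a_{k,j}(n-k')^{\underline{r-k'}}$, and the induction step requires the falling-factorial factors $(n-k')^{\underline{r-k'}}$ (for $1\leqslant k'\leqslant r$), as well as $n^{\underline{r}}$ itself, to be \emph{nonnegative}; this holds once $n\geqslant r$, which is why the base cases are exactly $n=0,\dots,r-1$. Without that sign check the triangle-inequality comparison between the two recurrences does not go through.

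Your first route, as written, has a real gap: integrating $r$ times and applying integration by parts does \emph{not} produce an equation of the form $y=\Phi\,y+\psi$ with $\Phi\in\mathbb{C}[[z]]$ a multiplication operator. Each integration by parts on $\int_0^z a_k\,y^{(k)}$ leaves behind a term of the form $\int_0^z(\cdots)\,y$, so $\Phi$ would have to be a Volterra-type integral operator, not a power series, and Lemma~\ref{lem:maj linear} does not apply to it directly. One can in fact run a majorant argument at the level of such integral operators (this is closer to the classical Cauchy presentation), but it would need a separate lemma, not the one you cite. Since the coefficientwise induction gives the result with much less machinery, it is the better choice here.
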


\begin{proof}
  Write $y^{(k)} (z) = \sum_n (n + k)^{\underline{k}} y_{n + k} z^n$, where
  $n^{\underline{k}} = n (n - 1) \cdots (n - k + 1)$. The equation on~$y (z)$
  translates into
  \[ \sum_n (n + r)^{\underline{r}} y_{n + r} z^n - \sum_{k = 0}^{r - 1}
     \sum_n \sum_{j = 0}^{\infty} a_{k, j}  (n + k)^{\underline{k}} y_{n + k -
     j} z^n = \sum_n b_n z^n, \]
  whence
  \[ n^{\underline{r}} y_n = \sum_{j = 0}^{\infty} \sum_{k' = 1}^r a_{k, j} 
     (n - k')^{\underline{r - k'}} y_{n - k' - j} + b_{n - r}, \]
  and similarly for $\hat{y} (z)$. As with Lemma~\ref{lem:recdeq}, these
  formulae hold for $n \in \mathbb{Z}$. The right-hand side only involves
  coefficients $y_j$ with $j < n$, and the polynomial coefficients $(n -
  k')^{\underline{r - k'}}$, including $n^{\underline{r}}$, are nonnegative as
  soon as $n \geqslant r$. For $n \geqslant r$ and assuming $\abs{y_k} \leqslant
  \hat{y}_k$ for all $k < n$, we thus have
  \begin{align*}
    n^{\underline{r}}  \abs{y_n} & \leqslant \sum_{j = 0}^{\infty} \sum_{k' =
    1}^r \abs{a_{k, j}}  (n - k')^{\underline{r - k'}}  \abs{y_{n - k' - j}} + |
    b_{n - r} |\\
    & \leqslant \sum_{j = 0}^{\infty} \sum_{k' = 1}^r \hat{a}_{k, j}  (n -
    k')^{\underline{r - k'}}  \hat{y}_{n - k' - j} + \hat{b}_{n - r}\\
    & = n^{\underline{r}}  \hat{y}_n .
  \end{align*}
  The result then follows by induction from the inequalities $|
  y_0 | \leqslant \hat y_0, \ldots, \allowbreak {\abs{y_{r - 1}} \leqslant \hat{y}_{r -
  1}}$.
\end{proof}

Like in the case of linear algebraic equations, this result admits variants
that deal with differential inequalities. We limit ourselves to first-order
equations here.

\begin{lemma}
  \label{lem:maj diff ineq}Consider power series $\hat{a}_0, \hat{a}_1, b, y
  \in \mathbb{R}_{\geqslant 0} [[z]]$ with nonnegative coefficients such that
  $\hat{a}_1 (0) = 0$ and
  \begin{equation}
    y' (z) \ll \hat{a}_1 (z) y' (z) + \hat{a}_0 (z) y (z) + \hat{b} (z) .
    \label{eq:diff ineq}
  \end{equation}
  The equation
  \begin{equation}
    \hat{y}' (z) = \hat{a}_1 (z)  \hat{y}' (z) + \hat{a}_0 (z)  \hat{y} (z) +
    \hat{b} (z) \label{eq:maj diff ineq}
  \end{equation}
  admits a unique solution $\hat{y}$ with $\hat{y} (0) = y (0)$, and one has
  $y \ll \hat{y}$.
\end{lemma}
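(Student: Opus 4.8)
The plan is to mimic the proof of Lemma~\ref{lem:maj linear}, extracting coefficients and arguing by induction, but now the coefficient recurrences that arise are of a slightly different shape because of the presence of $y'$ on the right-hand side. First I would establish existence and uniqueness of $\hat y$: extracting the coefficient of $z^n$ from~\eqref{eq:maj diff ineq} and using $\hat a_1(0) = 0$, the coefficient of $\hat y'$ on the left is $(n+1)\hat y_{n+1}$ while on the right the contribution of $\hat a_1(z)\hat y'(z)$ only involves $\hat y_1, \dots, \hat y_n$ (since $[z^0]\hat a_1 = 0$). Hence each $\hat y_{n+1}$ is uniquely determined by $\hat y_0, \dots, \hat y_n$ and the data, and with $\hat y_0 = y(0)$ prescribed the solution exists and is unique as a formal power series. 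Because all the $\hat a_{i,j}, \hat b_n$ are nonnegative and the recurrence expresses $\hat y_{n+1}$ as a nonnegative combination of earlier coefficients, one also gets $\hat y \gg 0$ for free, which is what makes it a legitimate majorant candidate.

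Next I would write out the two coefficient recurrences side by side. From~\eqref{eq:diff ineq}, extracting $[z^n]$ gives
\[
  (n+1)\,|y_{n+1}| \leqslant \sum_{j\geqslant 1} \hat a_{1,j}\,(n+1-j)\,|y_{n+1-j}|
  + \sum_{j\geqslant 0} \hat a_{0,j}\,|y_{n-j}| + \hat b_n,
\]
where the first sum starts at $j=1$ precisely because $\hat a_1(0)=0$, and all the indices appearing on the right are at most $n$. The corresponding identity for $\hat y$ is the same with $|y_k|$ replaced by $\hat y_k$ and ``$\leqslant$'' replaced by ``$=$''. Then a straightforward induction on $n$: assuming $|y_k| \leqslant \hat y_k$ for all $k \leqslant n$, the displayed inequality together with nonnegativity of every coefficient $\hat a_{1,j}, \hat a_{0,j}, \hat b_n$ and of the integer factors $(n+1-j)$ gives $(n+1)|y_{n+1}| \leqslant (n+1)\hat y_{n+1}$, hence $|y_{n+1}| \leqslant \hat y_{n+1}$; the base case is $|y_0| \leqslant \hat y_0$, which holds by hypothesis. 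This yields $y \ll \hat y$.

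The one place needing a little care — the main obstacle, such as it is — is the bookkeeping around the shift of index coming from differentiation: one must check that no coefficient $\hat y_{n+1}$ (or worse, $\hat y_{n+2}$) sneaks onto the right-hand side, since otherwise the recurrence would not be solvable step by step and the induction would break. This is exactly where the hypothesis $\hat a_1(0) = 0$ is used: without it, $[z^0]\hat a_1(z)\hat y'(z)$ would contribute $(n+1)\hat y_{n+1}$ to the right-hand side and the coefficient of $\hat y_{n+1}$ would be $1 - \hat a_{1,0}$ rather than~$1$. With $\hat a_{1,0}=0$ this term is absent and the triangular structure is preserved. I would also remark, as in the discussion preceding Lemma~\ref{lem:maj linear}, that the version for arbitrary $y \in \mathbb{C}[[z]]$ satisfying the differential inequality with $|y_0|$ prescribed follows by applying the lemma to $\minmaj{y}$, since $\minmaj{y}' = \minmaj{(y')}$ by Lemma~\ref{lem:maj-series}\eqref{item:maj-series:basic}(d) applied with equality on the minimal majorant.
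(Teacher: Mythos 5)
Your proposal is correct and follows essentially the same route as the paper's proof: extract the coefficient of $z^n$ from both the inequality and the majorant equation, observe that $\hat a_1(0)=0$ keeps $\hat y_{n+1}$ off the right-hand side so the recurrence is triangular, and conclude by induction. The paper is terser; the only content you add beyond it is the useful but routine observation that $\hat y \gg 0$ and the closing remark on deducing the $\mathbb{C}[[z]]$ version via $\minmaj{y}$, both of which are consistent with the paper's surrounding discussion.
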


\begin{proof}
  Since $\hat{a}_{1, 0} = 0$, the right-hand side of the inequality
  \[ (n + 1) y_{n + 1} \leqslant \sum_{j = 1}^n \hat{a}_{1, j}  (n - j + 1)
     y_{n - j + 1} + \sum_{j = 0}^n \hat{a}_{0, j} y_{n - j} + \hat{b}_n \]
  corresponding to the extraction of the coefficient of $z^n$ in~\eqref{eq:diff
  ineq} only involves coefficients $y_j$ with $j \leqslant n$. Equation
  \eqref{eq:maj diff ineq} corresponds to a recurrence of a similar shape (and
  therefore has a unique solution), and one concludes by comparing these
  relations.
\end{proof}

Solving majorant equations of the type~\eqref{eq:maj diff eq}, \eqref{eq:maj diff
ineq} yields majorants involving antiderivatives. The following observation
can be useful to simplify the resulting expressions.

\begin{lemma}
  \label{lem:ipp}For $\hat{f}, \hat{g} \in \mathbb{R}_{\geqslant 0} [[z]]$,
  one has $\int_0^z (\hat{f}  \hat{g}) \ll \hat{f}  \int_0^z \hat{g}$. In
  particular, $\int_0^z \hat{f}$ is bounded by~$z \hat{f} (z)$.
\end{lemma}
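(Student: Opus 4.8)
The plan is to prove the inequality by a direct comparison of coefficients, using crucially that both sides carry nonnegative coefficients, so that a termwise domination may be summed. Write $\hat{f}(z) = \sum_{n \geqslant 0} f_n z^n$ and $\hat{g}(z) = \sum_{n \geqslant 0} g_n z^n$ with all $f_n, g_n \geqslant 0$. Both $\int_0^z (\hat{f}\hat{g})$ and $\hat{f}\int_0^z \hat{g}$ have zero constant term, so it suffices to compare $[z^m]$ of each side for every $m \geqslant 1$.

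On the left, the Cauchy product gives $[z^{m-1}](\hat{f}\hat{g}) = \sum_{k=0}^{m-1} f_k\, g_{m-1-k}$, whence after integration
\[ [z^m] \int_0^z (\hat{f}\hat{g}) = \frac{1}{m} \sum_{k=0}^{m-1} f_k\, g_{m-1-k}. \]
On the right, $\int_0^z \hat{g} = \sum_{n \geqslant 0} \frac{g_n}{n+1}\, z^{n+1}$, and multiplying by $\hat{f}$ and collecting the terms of total degree $m$ — pairing a factor $z^k$ from $\hat{f}$ with $z^{m-k}$ from $\int_0^z \hat{g}$, i.e.\ $n+1 = m-k$ — yields
\[ [z^m] \left( \hat{f} \int_0^z \hat{g} \right) = \sum_{k=0}^{m-1} \frac{f_k\, g_{m-1-k}}{m-k}. \]

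To finish, I would observe that for $0 \leqslant k \leqslant m-1$ one has $1 \leqslant m-k \leqslant m$, hence $\frac{1}{m} \leqslant \frac{1}{m-k}$; since every product $f_k g_{m-1-k}$ is nonnegative, the inequality holds term by term, and summing over $k$ gives $[z^m]\int_0^z (\hat{f}\hat{g}) \leqslant [z^m](\hat{f}\int_0^z \hat{g})$, which is precisely $\int_0^z(\hat{f}\hat{g}) \ll \hat{f}\int_0^z \hat{g}$. The ``in particular'' clause then drops out by taking $\hat{g} = 1$, so that $\hat{f}\hat{g} = \hat{f}$ and $\int_0^z \hat{g} = z$. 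I do not anticipate a genuine obstacle here: the only points requiring a little care are the index bookkeeping in the Cauchy product after integration, and the explicit appeal to nonnegativity of the coefficients, without which the termwise comparison — and indeed the statement — would fail.
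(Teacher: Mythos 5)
Your proof is correct, and the coefficient bookkeeping is right: $[z^m]\int_0^z(\hat f\hat g)=\tfrac1m\sum_{k=0}^{m-1}f_k g_{m-1-k}$, while $[z^m]\bigl(\hat f\int_0^z\hat g\bigr)=\sum_{k=0}^{m-1}\tfrac{1}{m-k}f_k g_{m-1-k}$, and termwise $\tfrac1m\leqslant\tfrac{1}{m-k}$ combined with nonnegativity of the $f_k g_{m-1-k}$ gives the inequality. The paper takes a different, slicker route: it invokes integration by parts, i.e.\ writes
\[
\hat f(z)\int_0^z\hat g - \int_0^z(\hat f\hat g) \;=\; \int_0^z \hat f'(w)\Bigl(\int_0^w\hat g\Bigr)\,\mathd w,
\]
and then simply observes that the right-hand side has nonnegative coefficients, since $\hat f'\gg 0$ and $\int_0^z\hat g\gg 0$. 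Both arguments are essentially one step; the paper's buys brevity and stays within the formal calculus of Lemma~\ref{lem:maj-series} (nonnegativity is preserved by differentiation, integration, and products), avoiding the explicit index arithmetic, while yours is more self-contained and makes the mechanism — the factor $1/m$ versus $1/(m-k)$ — completely transparent. Either is a fine proof; the only thing to note is that the paper's proof reveals exactly what is being discarded, namely $\int_0^z \hat f' \cdot (\int_0^w \hat g)$, which your coefficient computation reproduces implicitly as $\sum_k\bigl(\tfrac{1}{m-k}-\tfrac1m\bigr)f_k g_{m-1-k}$.
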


\begin{proof}
  Integration by parts shows that $\int_0^z (\hat{f}  \hat{g}) - \hat{f} 
  \int_0^z \hat{g} \in \mathbb{R}_{\geqslant 0} [[z]]$.
\end{proof}

It is possible to state much more general results along these lines, and
cover, among other things, general implicit functions, solutions of partial
differential equations, and various singular equations. We refer to
{\cite[Chap.~VII]{Cartan1961}}, {\cite{vanderHoeven2003}},
{\cite{WarneWarneSochackiParkerCarothers2006}},
and~{\cite[Appendix~A]{GiustiLecerfSalvyYakoubsohn2005}} for some
results that may turn out to be useful in more complicated error analyses.

\section{Floating-Point Errors}\label{sec:fp}

The toy example from Section~\ref{sec:toy abs} illustrates error propagation
in fixed-point arithmetic, where each elementary operation introduces a
bounded absolute error. In a floating-point setting, the need to deal with the
propagation of relative errors complicates the analysis. Thorough treatments
of the analysis of floating-point computations can be found in the books of
Wilkinson~{\cite{Wilkinson1963}} and Higham~{\cite{Higham2002}}. We will use
the following definitions and properties.

For simplicity, we assume that we are working in binary floating-point
arithmetic with unbounded exponents. We make no attempt at covering
underflows%
\footnote{It would be interesting to extend the methodology to
this case.
Doing so might require adapting the results of this section and the previous one to deal with equations mixing features of absolute and relative error analysis.}.
Following standard practice, our error bounds are mainly based on the
following inequalities that link the approximate version~$\tilde{\ast}$ of
each arithmetic operation $\ast \nosymbol \in \{ +, -, \times, / \}$ to the
corresponding exact mathematical operation: \medskip
\begin{center}
\begin{tabular}{llll}
  $x \tilde{\ast} y = (x \ast y)  (1 + \delta_1)$, & $\abs{\delta_1} \leqslant
  u$ & --- & ``first standard model'' \cite[e.g.,][(2.4)]{Higham2002},\\
  $x \tilde{\ast} y = (x \ast y)  (1 + \delta_2)^{- 1}$, & $\abs{\delta_2}
  \leqslant u$ & --- &  ``modified standard model'' \cite[e.g.,][(2.5)]{Higham2002}.
\end{tabular}
\medskip
\end{center}
In addition, we occasionally use the fact that multiplications by
powers of two are exact.

The quantity~$u$ that appears in the above bounds is called the {\tmem{unit
roundoff}} and depends only on the precision and rounding mode. For example,
in standard $t$\mbox{-}bit round-to-nearest binary arithmetic, one can take $u
= 2^{- t}$.

The two ``standard models'' are somewhat redundant, and most error analyses in
the literature proceed exclusively from the first standard model. However,
working under the modified standard model sometimes helps avoid assumptions
that $nu < 1$ when studying the effect of a chain of $n$~operations, which is
convenient when working with generating series.

Suppose that a quantity $x \neq 0$ is affected by successive relative errors
$\delta_1, \delta_2, \ldots$ resulting from a chain of dependent operations,
with $\abs{\delta_i} \leqslant u$ for all~$i$. The cumulative relative
error~$\eta$ after $n$~steps is given by
\[ 1 + \eta = \prod_{i = 1}^n (1 + \delta_i) . \]
This leads us to introduce the following notation.
Definition~\ref{def:theta-hat}(\ref{item:theta}) below is
adapted from Higham's notation~\cite[Chapter~3]{Higham2002}, but more restrictive
compared with the assumptions of Lemmas 3.1~and~3.3 in~\cite{Higham2002}.

\begin{definition}
  \label{def:theta-hat}When the roundoff error~$u$ is fixed and clear from the
  context:
  \begin{enumerate}[(a)]
  \item \label{item:theta}
  We write $\eta = \theta_n$ to indicate that $1 +
  \eta = \prod_{i = 1}^n (1 + \delta_i)$ for some $\delta_1, \ldots, \delta_n$
  with $\abs{\delta_i} \leqslant u$.
  
  \item We define $\hat{\theta}_n = (1 + u)^n - 1$ for $n \geqslant 0$. As
  usual, this sequence is extended to $n \in \mathbb{Z}$ by setting
  $\hat{\theta}_n = 0$ for $n < 0$, and we also consider its generating series
  \[ \hat{\theta} (z) = \frac{1}{1 - (1 + u) z} - \frac{1}{1 - z} . \]
  \item More generally, we set
  \[ \hat{\theta}^{(p, q)} (z) = \sum_{n = 0}^{\infty} \hat{\theta}_{pn + q}
     z^n = \frac{(1 + u)^q}{1 - (1 + u)^p z} - \frac{1}{1 - z} . \]
  \end{enumerate}
\end{definition}

Thus, a series whose coefficient of index~$n$ is of the form~$\theta_n$
satisfies
\begin{equation}
  \sum_{n=0}^{\infty} \theta_n z^n \ll \hat{\theta} (z) . \label{eq:theta hat}
\end{equation}
Similarly, the generating series corresponding to a regularly spaced
subsequence of~$(\theta_n)$ (e.g., cumulative errors after every second operation)
is bounded by~$\hat{\theta}^{(p, q)}$ for appropriate $p$~and~$q$.
The inequality~\eqref{eq:theta hat} is closely related to that between
quantities $\abs{\theta_n}$ and $\gamma_n = nu / (1 - nu)$ used extensively in
Higham's book, and one has $\hat{\theta}_n \leqslant \gamma_n$ for $nu < 1$.
However, compared to {\cite[Lemma~3.1]{Higham2002}}, our definition
of~$\theta_n$ only allows for nonnegative powers of $1 + \delta_i$,
and \eqref{eq:theta hat} would fail to hold without this restriction.

To rewrite the relation $\tilde{x}_n = x_n  (1 + \theta_n)$ in terms of
generating series, we can use the {\tmem{Hadamard product}} of series, defined
by
\[ (f \odot g) (z) = \sum_n f_n g_n z^n . \]
An immediate calculation starting from Definition~\ref{def:theta-hat} yields a
closed-form expression of the Hadamard product with~$\hat{\theta}^{(p, q)}$.

\begin{lemma}
  \label{lem:hadamard theta hat}For any power series~$\hat{f} (z)$ with
  nonnegative coefficients, it holds that
  \[ (\hat{\theta}^{(p, q)} \odot \hat{f}) (z) = \sum_{n = 0}^{\infty}
     \hat{\theta}_{pn + q}  \hat{f}_n z^n = (1 + u)^q  \hat{f} ((1 + u)^p z) -
     \hat{f} (z) . \]
\end{lemma}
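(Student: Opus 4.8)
The plan is to unwind the definitions and compute the Hadamard product coefficient by coefficient. First I would write out $\hat\theta^{(p,q)}_n = \hat\theta_{pn+q} = (1+u)^{pn+q} - 1$ for $n \geqslant 0$, using Definition~\ref{def:theta-hat}(b)--(c); this is the key identity, and it holds for all $n \geqslant 0$ since $pn+q \geqslant 0$ in that range (and the sequence $\hat\theta_m$ vanishes for $m<0$, but that case does not arise). Then the $n$-th coefficient of $\hat\theta^{(p,q)} \odot \hat f$ is $\bigl((1+u)^{pn+q} - 1\bigr)\hat f_n = (1+u)^q \bigl((1+u)^p\bigr)^n \hat f_n - \hat f_n$.

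Next I would recognize the two pieces as rescalings of $\hat f$. Summing $(1+u)^q\bigl((1+u)^p\bigr)^n \hat f_n\, z^n$ over $n \geqslant 0$ gives $(1+u)^q \hat f\bigl((1+u)^p z\bigr)$ — this is just the substitution rule $[z^n]\hat f(\lambda z) = \lambda^n \hat f_n$, and it is legitimate as a formal-series manipulation; one can also invoke Lemma~\ref{lem:maj-series}(\ref{item:maj-series:compose}) to see that the composition makes sense and that convergence is inherited from $\hat f$. Summing $\hat f_n z^n$ gives $\hat f(z)$. Subtracting yields the claimed closed form $(1+u)^q \hat f\bigl((1+u)^p z\bigr) - \hat f(z)$.

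There is essentially no obstacle here: the statement is an immediate consequence of the geometric-series closed form for $\hat\theta^{(p,q)}$ recorded in the definition, combined with linearity of the Hadamard product and the fact that Hadamard multiplication by $z \mapsto (1-\alpha z)^{-1}$ amounts to the substitution $z \mapsto \alpha z$. The only point deserving a word of care is that the identity is being asserted at the level of formal power series (so no convergence issue needs to be addressed), and that $\hat f \gg 0$ guarantees all the manipulations stay within $\mathbb R_{\geqslant 0}[[z]]$; the hypothesis that $\hat f$ has nonnegative coefficients is in fact not needed for the algebraic identity itself, but it matches the intended use where the right-hand side should again be a bona fide majorant-type series. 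I would therefore present the proof as a two-line computation: expand $\hat\theta^{(p,q)}_n \hat f_n$, split the sum, and read off the two rescaled copies of $\hat f$.
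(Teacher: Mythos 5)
Your proof is correct and is exactly the ``immediate calculation'' the paper alludes to without spelling out: expand $\hat\theta_{pn+q} = (1+u)^{pn+q}-1$, multiply by $\hat f_n$, split the sum, and recognize the two rescaled copies of $\hat f$. Your remark that the nonnegativity of $\hat f$ is not needed for the algebraic identity itself is also accurate.
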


\section{Variable Coefficients: Legendre Polynomials}\label{sec:legendre}

With this background in place, we now consider a ``real'' application
involving a recurrence with polynomial coefficients (that, additionally, depend on a
parameter~$x$). The example is adapted%
\footnote{Proposition~\ref{prop:rec-error} and its proof are identical, up to
presentation details, to {\cite[Proposition~5]{JohanssonMezzarobba2018}},
and included here for expository reasons only.
The work eventually leading to the present paper actually started first
and found an unexpected application in~\cite{JohanssonMezzarobba2018},
which motivated us to develop it further.}
from material previously published
in~{\cite{JohanssonMezzarobba2018}}. Let $P_n$ denote the Legendre
polynomial of index~$n$, defined by
\[ \sum_{n = 0}^{\infty} P_n (x) z^n = \frac{1}{\sqrt{1 - 2 xz + z^2}} . \]
Fix $x \in [- 1, 1]$, and let $p_n = P_n (x)$. The classical three-term
recurrence
\begin{equation}
  p_{n + 1} = \frac{1}{n + 1}  ((2 n + 1) xp_n - np_{n - 1}), \qquad n \geqslant 0
  \label{eq:legendre exact}
\end{equation}
allows us to compute~$p_n$ for any~$n$ starting from $p_0 = 1$ and an
arbitrary $p_{- 1} \in \mathbb{R}$. Suppose that we run this computation in
fixed-point arithmetic, with an absolute error~$\varepsilon_n$ at step~$n$, in
the sense that the computed values~$\tilde{p}_n$ satisfy
\begin{equation}
  \tilde{p}_{n + 1} = \frac{1}{n + 1}  ((2 n + 1) x \tilde{p}_n - n
  \tilde{p}_{n - 1}) + \varepsilon_n, \qquad n \geqslant 0. \label{eq:legendre
  pert}
\end{equation}
An analysis not taking into account the dependencies between the errors at each
step yields $\abs{\tilde p_n - p_n} \lessapprox (1 + \sqrt2)^n \bar\varepsilon/4$
where $\bar\varepsilon = \max_n \varepsilon_n$.

\begin{proposition}
  \label{prop:rec-error}Let $(\tilde{p}_n)_{n \geqslant - 1}$ be a sequence of real
  numbers satisfying~\eqref{eq:legendre pert}, with $\tilde{p}_0 = 1$. Assume
  that $\abs{\varepsilon_n} \leqslant \bar{\varepsilon}$ for all~$n$. Then, for all
  $n \geqslant 0$, the global absolute error satisfies
  \[ \abs{\tilde{p}_n - p_n} \leqslant \frac{(n + 1)  (n + 2)}{4}  \bar{\varepsilon}
     . \]
\end{proposition}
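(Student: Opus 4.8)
I would begin by setting $\delta_n = \tilde{p}_n - p_n$, so that $\delta_0 = 0$ since $p_0 = P_0(x) = 1 = \tilde{p}_0$. Subtracting~\eqref{eq:legendre exact} from~\eqref{eq:legendre pert} and clearing the denominator gives
\[ (n+1)\,\delta_{n+1} = (2n+1)\,x\,\delta_n - n\,\delta_{n-1} + (n+1)\,\varepsilon_n, \qquad n \geqslant 0, \]
where the value of $\delta_{-1}$ is irrelevant because it occurs with the coefficient $n$, which vanishes at $n = 0$. With $\delta(z) = \sum_{n \geqslant 0} \delta_n z^n$ and $\varepsilon(z) = \sum_{n \geqslant 0} \varepsilon_n z^n$, summing the recurrence against $z^n$ over $n \geqslant 0$ (cf.\ Lemma~\ref{lem:recdeq}) turns it into the linear differential equation
\[ (1 - 2xz + z^2)\,\delta'(z) = (x-z)\,\delta(z) + \psi(z), \qquad \psi(z) \assign \bigl(z\,\varepsilon(z)\bigr)' = \sum_{n \geqslant 0} (n+1)\,\varepsilon_n z^n. \]
As a sanity check, the homogeneous part is exactly the equation satisfied by the Legendre generating function $(1 - 2xz + z^2)^{-1/2} = \sum_n p_n z^n$.

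Next I would divide through by $1 - 2xz + z^2$ to reach the normal form $\delta'(z) = a_0(z)\,\delta(z) + b(z)$ with $a_0 = (x-z)/(1-2xz+z^2)$ and $b = \psi/(1-2xz+z^2)$, and then build majorant series for $a_0$ and $b$ that are valid uniformly for $x \in [-1, 1]$. Writing $x = \cos\theta$ and factoring $1 - 2xz + z^2 = (1 - \mathrm{e}^{\mathi\theta}z)(1 - \mathrm{e}^{-\mathi\theta}z)$, the coefficient of $z^n$ in $(1-2xz+z^2)^{-1}$ is $\sum_{j=0}^{n} \mathrm{e}^{\mathi(2j-n)\theta}$, a sum of $n+1$ terms of modulus $1$, so $(1-2xz+z^2)^{-1} \ll (1-z)^{-2}$; multiplying by $x - z$ and simplifying, the coefficient of $z^n$ in $a_0$ is $\cos((n+1)\theta)$, whence $a_0 \ll (1-z)^{-1}$. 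From $|\varepsilon_n| \leqslant \bar{\varepsilon}$ one gets $\psi \ll \bar{\varepsilon}/(1-z)^2$, and then $b \ll \bar{\varepsilon}/(1-z)^4$ by the product rule for majorant series (Lemma~\ref{lem:maj-series}).

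Proposition~\ref{prop:maj deq}, applied with $r = 1$ and with the trivially satisfied initial condition $|\delta_0| = 0$, then gives $\delta(z) \ll \hat{y}(z)$, where $\hat{y}$ is the power series solution of
\[ \hat{y}'(z) - \frac{\hat{y}(z)}{1-z} = \frac{\bar{\varepsilon}}{(1-z)^4}, \qquad \hat{y}(0) = 0. \]
Solving this first-order linear equation with the integrating factor $1 - z$ yields $\hat{y}(z) = \tfrac{\bar{\varepsilon}}{2}\bigl((1-z)^{-3} - (1-z)^{-1}\bigr) \ll \tfrac{\bar{\varepsilon}}{2}(1-z)^{-3}$, so extracting the coefficient of $z^n$ gives $|\delta_n| \leqslant \tfrac{\bar{\varepsilon}}{2}\binom{n+2}{2} = \tfrac14 (n+1)(n+2)\,\bar{\varepsilon}$, as claimed. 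One could instead integrate the differential equation explicitly, as $\delta(z) = (1-2xz+z^2)^{-1/2} \int_0^z (1-2xt+t^2)^{-1/2}\,\psi(t)\,\mathd t$, and majorize that, but invoking Proposition~\ref{prop:maj deq} is the shortest route.

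The only step that demands real thought is the pair of majorant bounds $a_0 \ll (1-z)^{-1}$ and $(1-2xz+z^2)^{-1} \ll (1-z)^{-2}$: everything hinges on recognizing that the Taylor coefficients of these functions are, up to an index shift, the Chebyshev polynomials $\cos((n+1)\theta)$ and $\sin((n+1)\theta)/\sin\theta$ in $x = \cos\theta$, hence bounded by $1$ and $n+1$ respectively and uniformly in $x \in [-1, 1]$. Once that is established, the remainder is a mechanical application of Lemmas~\ref{lem:recdeq} and~\ref{lem:maj-series} and Proposition~\ref{prop:maj deq}.
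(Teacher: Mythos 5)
Your proof is correct. After translating the recurrence on $\delta_n = \tilde p_n - p_n$ to the ODE $(1-2xz+z^2)\,\delta'(z) = (x-z)\,\delta(z) + \eta(z)$, you and the paper diverge. The paper solves this explicitly by variation of parameters, $\delta(z) = p(z)\int_0^z \eta(w)\,p(w)\,\mathd w$ with $p(z)=(1-2xz+z^2)^{-1/2}$, then majorizes both occurrences of $p$ by $(1-z)^{-1}$ via the classical bound $|P_n|\leqslant 1$. You instead normalize to $\delta' = a_0\delta + b$, bound the rational coefficients directly ($a_0\ll(1-z)^{-1}$, $b\ll\bar\varepsilon(1-z)^{-4}$) using the Chebyshev identities for $(x-z)/(1-2xz+z^2)$ and $(1-2xz+z^2)^{-1}$, and invoke Proposition~\ref{prop:maj deq} with $r=1$ to pass to the majorant equation, which you solve. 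Both routes produce the same $\hat y(z) = \tfrac{\bar\varepsilon}{2}\bigl((1-z)^{-3}-(1-z)^{-1}\bigr)\ll\tfrac{\bar\varepsilon}{2}(1-z)^{-3}$ and hence the same coefficient bound. Yours is the more generic Cauchy-majorant pattern (bound the equation, then solve the model equation); its advantage is that one never needs to recognize the homogeneous solution in closed form, since Proposition~\ref{prop:maj deq} handles that bookkeeping, which would matter for an equation whose homogeneous solution is not as pleasant. The paper's route is a hair shorter here precisely because the homogeneous solution \emph{is} the familiar Legendre generating function, so the single fact $|P_n|\leqslant 1$ does double duty. You acknowledge the paper's variant yourself at the end of your argument.
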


\begin{proof}
  Let $\delta_n = \tilde{p}_n - p_n$ and $\eta_n = (n + 1) \varepsilon_n$.
  Subtracting \eqref{eq:legendre exact}~from~\eqref{eq:legendre pert} gives
  \begin{equation}
    \label{eq:rec-error} (n + 1) \delta_{n + 1} = (2 n + 1) x \delta_n - n
    \delta_{n - 1} + \eta_n,
  \end{equation}
  with $\delta_0 = 0$. Note that~\eqref{eq:rec-error} holds for all $n \in
  \mathbb{Z}$ if the sequences $(\delta_n)$ and $(\eta_n)$ are extended by~$0$
  for $n < 0$. By Lemma~\ref{lem:recdeq}, it translates into
  \[ (1 - 2 xz + z^2) z \frac{\mathd}{\mathd z} \delta (z) = z (x - z) \delta
     (z) + z \eta (z) . \]
  The solution of this differential equation with $\delta (0) = 0$ reads
  \[ \delta (z) = p (z)  \int_0^z \eta (w)  \hspace{0.17em} p (w)
     \hspace{0.17em} \mathd w, \qquad p (z) = \sum_{n = 0}^{\infty} p_n z^n =
     \frac{1}{\sqrt{1 - 2 xz + z^2}} . \]
  It is well known that $\abs{P_n}$ is bounded by~$1$ on $[- 1, 1]$, so that $p
  (z) \ll (1 - z)^{- 1}$, and the definition of~$\eta_n$ implies $\eta (z) \ll
  \bar{\varepsilon}  (1 - z)^{- 2}$. It follows by Lemma~\ref{lem:maj-series}
  that
  \[ \delta (z) \ll \frac{1}{1 - z}  \int_0^z \frac{\bar{\varepsilon}}{(1 -
     w)^3} \hspace{0.17em} \mathd w = \frac{\bar{\varepsilon}}{2 (1 - z)^3} \]
  and therefore $\abs{\delta_n} \leqslant (n + 1)  (n + 2)  \bar{\varepsilon} / 4.$
\end{proof}

Reasoning in the same way but using the inequality
$p(z) \ll \sqrt2 (1-x^2)^{-1/4} (1-z)^{-1/2}$
\cite[e.g.,][proof of Proposition~3]{JohanssonMezzarobba2018}
instead of $p(z) \ll (1-z)^{-1}$, we can also prove that one has
$|\tilde p_n - p_n| < \frac43 (1-x^2)^{-1/2} n \bar\varepsilon$
when $|x| < 1$.
For comparison, Hrycak and Schmutzhard~\cite{HrycakSchmutzhard2018} show that
when \eqref{eq:legendre exact} is evaluated in \emph{floating-point}
arithmetic with unit roundoff~$u$,
the absolute error~$|\tilde p_n - p_n|$ is bounded
by~$21 n^2 u$,
and by $129 (1-x^2)^{-1/2} n u$ for $\abs{x} < 1$,
in both cases assuming that $5 n u^{1/2} \leqslant 1$.

\section{Relative Errors: The Toy Example Revisited}\label{sec:toy rel}

Let us return to the recurrence
\begin{equation}
  c_{n + 1} = 2 c_n - c_{n - 1} \label{eq:toy exact bis}
\end{equation}
considered in Section~\ref{sec:toy abs}, but now look at what happens when the
computation is carried out in floating-point arithmetic, using the observations
made in Section~\ref{sec:fp}.

We assume binary floating-point arithmetic with unit roundoff~$u$, and
consider the iterative computation of the sequence defined by~\eqref{eq:toy
exact bis} with $c_{- 1} = 0$ and (for simplicity) an exactly representable initial
value~$c_0$.
We have $c_n = (n + 1) c_0$, that is,
\[ c (z) = \frac{c_0}{(1 - z)^2} . \]
The floating-point computation produces a sequence of
approximations $\tilde{c}_n \approx c_n$ with $\tilde{c}_0 = c_0$. Using the
standard model recalled in Section~\ref{sec:fp} and the fact that
multiplication by~$2$ is exact, the analogue of the local estimate~\eqref{eq:toy
fxp} reads
\begin{equation}
  \tilde{c}_{n + 1} = (2 \tilde{c}_n - \tilde{c}_{n - 1})  (1 +
  \varepsilon_n), \qquad \abs{\varepsilon_n} \leqslant u. \label{eq:toy fp}
\end{equation}
Let $\delta_n = \tilde{c}_n - c_n$. By subtracting $(1 + \varepsilon_n)$
times~\eqref{eq:toy exact bis} from \eqref{eq:toy fp} and reorganizing, we get
\[ \delta_{n + 1} - 2 \delta_n + \delta_{n - 1} = \varepsilon_n  (c_{n + 1} +
   2 \delta_n - \delta_{n - 1}), \]
which rewrites
\[ (z^{- 1} - 2 + z) \delta (z) = \varepsilon (z) \odot (z^{- 1} c (z) + (2 -
   z) \delta (z)) . \]
We multiply this equation by~$z$ to get
\[ (1 - z)^2 \delta (z) = (z \varepsilon (z)) \odot (c (z) + z (2 - z) \delta
   (z)) . \]
Denote $\gamma (z) = (1 - z)^2 \delta (z)$ and $b (z) = (1 - z)^{- 2} - 1$.
Since $\abs{\varepsilon_n} \leqslant u$ and $b (z) \gg 0$, we have
\[ \gamma (z) = (z \varepsilon (z)) \odot (c (z) + b (z) \gamma (z)) \ll u
   \left( \minmaj{c} (z) + b (z)  \minmaj{\gamma} (z) \right), \]
and therefore
\[ \minmaj{\gamma} (z) \ll u \left( \minmaj{c} (z) + b (z)  \minmaj{\gamma}
   (z) \right) \]
where $b (0) = 0$. By Lemma~\ref{lem:maj linear}, it follows that
\[ \gamma (z) \ll \minmaj{\gamma} (z) \ll \minmaj{c} (z)  \frac{u}{1 - ub (z)}
   = \frac{\abs{c_0}}{(1 - z)^2}  \frac{(1 - z)^2 u}{1 - 2 (1 + u) z + (1 + u)
   z^2}, \]
whence
\begin{equation}
  \delta (z) \ll \frac{\abs{c_0}}{(1 - z)^2}  \frac{u}{(1 - \alpha z)  (1 -
  \beta z)} \backassign \hat{\delta} (z) \label{eq:toy fp first global bound}
\end{equation}
where $\alpha > \beta$ are the roots of $z^2 - 2 (1 + u) z + (1 + u)$.

From~\eqref{eq:toy fp first global bound}, a trained eye immediately reads off
the essential features the bound. Perhaps the most important information is
its asymptotic behavior as the working precision increases. For a bound that
depends on a problem dimension~$n$, it is customary to focus (sometimes
implicitly) on the leading order term as $u \rightarrow 0$ for
fixed\footnote{See Higham's book~{\cite{Higham2002}} for many
examples, and in particular the discussion of linearized bounds at the
beginning of Section~3.4. Already in equation~3.7, the implicit assumption $nu
< 1$ is not sufficient to ensure that the neglected term is $\Omicron (u^2)$,
if $n$ is allowed to grow while $u$ tends to zero.}~$n$, and in some cases to
further simplify it by looking at its asymptotic behavior for large~$n$.

In the present case, the definition of~$\alpha$ as a root of $z^2 - 2 (1 + u)
z + (1 + u)$ yields $\alpha = 1 + u^{1 / 2} + \Omicron (u)$, and hence
\begin{equation}
  \hat{\delta} (z) = \frac{\abs{c_0} u}{(1 - z)^4} + \Omicron (u^{3 / 2}) \qquad
  \text{as $u \rightarrow 0$}, \label{eq:toy fp asympt}
\end{equation}
or equivalently
\begin{equation}\label{eq:toy fp asympt 2}
  \hat{\delta}_n = \frac{1}{6}  (n + 1)  (n + 2)  (n + 3)  \abs{c_0} u +
   \Omicron (u^{3 / 2}), \qquad u \rightarrow 0, \quad \text{$n$ fixed} .
\end{equation}
The fact that $\hat{\delta}_n \approx n^3  \abs{c_0} u / 6$ in the sense that
$\lim_{u \rightarrow 0}  (u^{- 1}  \hat{\delta}_n) \sim_{n \rightarrow \infty}
n^3  \abs{c_0} u / 6$ also follows directly from~\eqref{eq:toy fp asympt} using
Lemma~\ref{lem:asympt}.
(One also sees that $\hat{\delta}_n = C \alpha^n +
\Omicron (n)$ as $n \rightarrow \infty$ for fixed~$u$, where $C$~could easily
be made explicit. This is however less relevant for our purposes,
cf.~Remark~\ref{rk:toy parfrac} below.)

As observed by one of the referees, \eqref{eq:toy exact bis}~is a special case of
the classical three-term recurrence for Chebyshev polynomials.
One has $c_n = c_0 \, U_n(1)$ where $U_n$~is the Chebyshev polynomial of the second kind.
One can thus view~$c_n$ as the value at~$1$ of a Chebyshev series reduced to a single term and compare~\eqref{eq:toy fp asympt 2} with known error bounds for the floating-point evaluation of Chebyshev series.
For example, the bound from \cite[Theorem~6]{Barrio2002} specializes in our setting to
$\abs{\delta_n} \leqslant \frac43 (n+1)(n^2+\frac{11}{4}n+3) \abs{c_0} u + \Omicron(u^2)$
and is hence just slightly worse than~\eqref{eq:toy fp asympt 2}, though much more general.

In order to obtain a bound that holds for all $n$~and~$u$, we can majorize both $(1 - z)^{-
1}$ and $(1 - \beta z)^{- 1}$ by $(1 - \alpha z)^{- 1}$ in~\eqref{eq:toy fp
first global bound}. We obtain $\hat{\delta} (z) \ll \abs{c_0} u (1 - \alpha
z)^{- 4}$, and therefore
\begin{equation}
  \abs{\delta_n} \leqslant \hat{\delta}_n \leqslant \abs{c_0}  \frac{(n + 1) 
  (n + 2)  (n + 3)}{6} \alpha^n u. \label{eq:toy bound}
\end{equation}
Recalling that $c_n = c_0  (n + 1)$, we conclude that $\tilde{c}_n = c_n  (1 +
\eta_n)$ where
\begin{equation} \label{eq:toy bound 2}
\abs{\eta_n} \leqslant \frac{(n + 2)  (n + 3)}{6} \alpha^n u.
\end{equation}
For an even more precise bound, one could also isolate the leading term of the
series expansion of~$\hat{\delta} (z)$ with respect to~$u$ and reason as above
to conclude that
\begin{equation} \label{eq:toy first order}
\abs{\eta_n} \leqslant \frac{1}{6}  {(n + 2)}  {(n + 3)} u + p (n) \alpha^n u^2
\end{equation}
for some explicit polynomial~$p (n)$.

With no other assumption on the error of subtraction
than the first standard model of floating-point arithmetic, the
bound~\eqref{eq:toy fp first global bound} is sharp: when $c_0 \geqslant 0$
and $\varepsilon_n \equiv u$, we have $\delta (z) = \hat{\delta} (z)$.
This means the exponential growth with~$n$ for fixed~$u$ is unavoidable
under these hypotheses.
However, the
exponential factor only starts contributing significantly when $n$~becomes
extremely large compared to the working precision. It is natural to control it
by tying the growth of~$n$ to the decrease of~$u$. In particular, it is clear
that $\alpha^n = \Omicron (1)$ if $n = \Omicron (u^{- 1 / 2})$. One can check
more precisely that $\alpha^n \leqslant e$ as long as $n \leqslant (\alpha -
1)^{- 1} \approx u^{- 1 / 2}$, and $\alpha^n \leqslant 3$ for all $n \leqslant
u^{- 1 / 2}$ provided that $u \leqslant 2^{- 7}$.
The leading term on the right-hand side
of~\eqref{eq:toy first order} is optimal as well, for the same reason.

\begin{figure}
  \centerline{\includegraphics{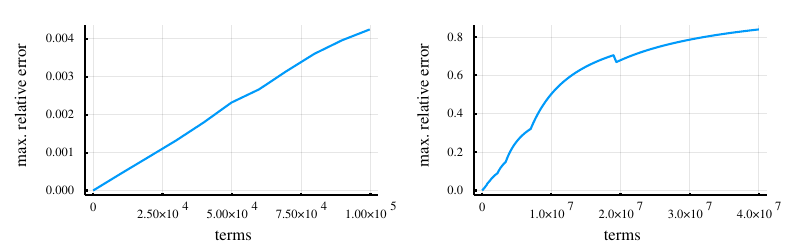}}
  \vspace*{-.5\baselineskip}
  \caption{%
    Measured error in the evaluation of the sequence~$(c_n)$ defined
    by~\eqref{eq:toy exact bis} in IEEE binary32 floating-point arithmetic
    ($u = 2^{-24} \simeq 6 \cdot 10^{-8}$).
    For each~$n$, we plot the maximum observed
    error on~$c_n$ for 997 exactly representable values of~$c_0$ regularly spaced
    in the interval~$[\pi/2, \pi]$.
    The script used to produce these plots is available in the supplementary material.
  }
  \label{fig:toy}
\end{figure}

Figure~\ref{fig:toy} illustrates, on two different scales, the accumulation of errors in numerical experiments.
We can see that the relative error actually growths roughly \emph{linearly} until it saturates around~$1$ (meaning that the computed results no longer have a single correct significant digit but remain of the correct order of magnitude).
Thus the bounds \eqref{eq:toy bound 2}, \eqref{eq:toy first order}
are still very pessimistic compared to reality.
This was to be expected: indeed, it would be quite surprising for all local errors~$\varepsilon_n$ to align to reach the worst case rather than be more or less evenly distributed in $[-u, u]$.
The standard model does not capture this fact.
We refer to Higham~\cite[Section~2.8]{Higham2002} and Higham and Mary~\cite{HighamMary2019} for a discussion of the kinds of bounds that can be derived if one is willing to make assumptions on the distribution of individual rounding errors.

\begin{remark}
  \label{rk:toy parfrac}Starting back from~\eqref{eq:toy fp first global bound},
  one may be tempted to write the partial fraction decomposition of
  $\hat{\delta} (z)$ and deduce an exact formula for $\hat{\delta}_n$. Doing
  so leads to
  \[ \hat{\delta}_n = (A \alpha^n - B \beta^n - u^{- 1} n)  \abs{c_0} u, \qquad
     A = \frac{\alpha^3}{(\alpha - 1)^2  (\alpha - \beta)}, \quad B = A - 1 -
     u^{- 1} . \]
  This expression is misleading, for it involves cancellations between terms
  that tend to infinity as $u$~goes to zero. In particular, we have $A \sim
  \frac{1}{2} u^{- 3 / 2}$ as $u \rightarrow 0$.
\end{remark}

\section{Relative Errors, Infinite Order: Scaled Bernoulli
Numbers}\label{sec:bernoulli}

For a more sophisticated application of the same idea, let us study the
floating-point computation of scaled Bernoulli numbers as described by
Brent~{\cite[Section~7]{Brent1980}} (see also Brent and
Zimmermann~{\cite[Section~4.7.2]{BrentZimmermann2010}}). This section is the
first where not only the method but the results are new.

The scaled Bernoulli numbers are defined in terms of the classical Bernoulli
numbers~$B_k$ by $b_k = B_{2 k} / (2 k) !$. Their generating series has a
simple explicit expression:
\begin{equation}
  b (z) = \sum_{k = 0}^{\infty} b_k z^k = \frac{\sqrt{z} / 2}{\tanh \left(
  \sqrt{z} / 2 \right)} . \label{eq:bernoulli gen-b}
\end{equation}
One possible algorithm for computing~$b_k$, suggested by Reinsch according to
Brent~{\cite[Section~12]{Brent1980}}, is to follow the recurrence
\begin{equation}
  b_k = \frac{1}{(2 k) !4^k} - \sum_{j = 0}^{k - 1} \frac{b_j}{(2 k + 1 - 2 j)
  !4^{k - j}} . \label{eq:bernoulli exact rec}
\end{equation}
In {\cite[Exercise~4.35]{BrentZimmermann2010}}, it is asked to ``prove (or
give a plausibility argument for)'' the fact that the relative error on~$b_k$
when computed using~\eqref{eq:bernoulli exact rec} in floating-point arithmetic
is~$\Omicron (k^2 u)$. The $\Omicron (k^2 u)$ bound is already mentioned
without proof in~{\cite{Brent1980}}, and again
in~{\cite[Section~2]{BrentHarvey2013}}. Paul Zimmermann (private
communication, June~2018) suggested that the dependency in~$k$ may actually be
linear rather than quadratic.

Our goal in this section is to prove a version of the latter conjecture. Like
in the previous section, it cannot be true if the $\Omicron (\cdot)$ is
interpreted as uniform as $u \rightarrow 0$ and $k \rightarrow \infty$
independently. It does hold, however, when $u$~and~$k$ are restricted to a
region where their product is small enough, as well as in the sense that the
relative error~$\eta_k$ for fixed~$k$ satisfies $\abs{\eta_k} \leqslant C_k u$
when~$u$ is small enough, for a sequence $C_k$ which itself satisfies $C_k =
\Omicron (k)$ as $k \rightarrow \infty$. We will in fact derive a fully
explicit, non-asymptotic bound in terms of $u$ and~$k$.

Based on the form of~\eqref{eq:bernoulli gen-b}, denote $w = \sqrt{z} / 2$,
and for any $f \in \mathbb{C} [[z]]$, define $f^{\ast}$ by $f^{\ast} (w) = f
(4 w^2)$, so that $f^{\ast} (w) = f (z)$. In particular, we have $b^{\ast} (w)
= w / \tanh w$. We will use the following classical facts about the numbers $|
b_k |$.

\begin{lemma}
  \label{lem:bernoulli abs}The absolute values of the scaled Bernoulli numbers
  satisfy
  \[ \minmaj{b}^{\ast} (w) = 2 - \frac{w}{\tan w}, \qquad \abs{b_k} \sim_{k
     \rightarrow \infty} \frac{2}{(2 \pi)^{2 k}}, \qquad \frac{2}{(2 \pi)^{2
     k}} \leqslant \abs{b_k} \leqslant \frac{4}{(2 \pi)^{2 k}}, \]
  where the last formula assumes $k \geqslant 1$.
\end{lemma}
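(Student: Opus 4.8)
The plan is to derive all three claims from the explicit expansion $b^{\ast}(w) = w/\tanh w$ together with the well-known fact that the tangent and cotangent have power series with signs that are controlled, so that passing to $\minmaj{b}^{\ast}$ amounts to a sign flip in the argument. First I would recall the classical expansions
\[
  \frac{w}{\tanh w} = 1 + \sum_{k=1}^{\infty} \frac{2^{2k} B_{2k}}{(2k)!} w^{2k},
  \qquad
  \frac{w}{\tan w} = 1 - \sum_{k=1}^{\infty} \frac{2^{2k} |B_{2k}|}{(2k)!} w^{2k},
\]
the point being that the Bernoulli numbers $B_{2k}$ alternate in sign, so $B_{2k} = (-1)^{k+1}|B_{2k}|$, and hence $b_k = B_{2k}/(2k)!$ has sign $(-1)^{k+1}$. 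Consequently $|b_k| = (-1)^{k+1} b_k$, and since the substitution $w \mapsto \mathi w$ (equivalently $z \mapsto -z$) sends $w/\tanh w$ to $w/\tan w$ up to exactly this alternation, one gets $\minmaj{b}^{\ast}(w) = \sum_k |b_k| 4^k w^{2k}$… wait—more precisely, $\minmaj{b}^{\ast}$ should be defined as the series $\sum_k |b_k^{\ast}\text{-coefficients}|$; I would instead argue directly that $\sum_{k\ge 0} |b_k| w^{2k}\cdot 4^k$ is obtained from $w/\tanh w$ by flipping the sign of every term with $k\ge 1$, which is $2 - w/\tan w$ once one accounts for the constant term $b_0 = 1$ contributing $+1$ and the flipped tail contributing $1 - w/\tan w$. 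This gives the first identity.

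Next, for the asymptotic estimate, I would invoke Lemma~\ref{lem:asympt} (singularity analysis) applied to $\minmaj{b}^{\ast}(w) = 2 - w/\tan w$: this function is meromorphic with its dominant singularities at $w = \pm\pi/2$, where $\tan w$ has a simple zero, so $w/\tan w$ has a simple pole with residue computable from $\tan w \sim -(w - \pi/2)$ near $w=\pi/2$. Translating back through $z = 4w^2$, the dominant singularity in the $z$-variable is at $z = \pi^2$, i.e. $z = (2\pi)^2/4$—let me recheck: $w = \pi/2 \Rightarrow z = 4\cdot \pi^2/4 = \pi^2$, so the radius of convergence of $b(z) = \sum b_k z^k$ is $\pi^2$, giving $|b_k|^{1/k} \to \pi^{-2}$, which is consistent with $|b_k|\sim 2/(2\pi)^{2k} = 2\cdot 4^{-k}\pi^{-2k}$ once the constant $2$ is pinned down by the residue. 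I would extract $C$ from the residue of $w/\tan w$ at $\pi/2$, which is $(\pi/2)/\cos(\pi/2)'\cdot(\dots)$—concretely $\operatorname{Res}_{w=\pi/2}\frac{w}{\tan w} = \frac{\pi/2}{(\cos w)'|_{\pi/2}/\cos'}$… I will just compute $\tan w = \frac{\sin w}{\cos w}$, $\cos(\pi/2)=0$, $\cos'(\pi/2) = -1$, so $\tan w \sim \frac{1}{-(w-\pi/2)}$, hence $\frac{w}{\tan w} \sim -(\pi/2)(w - \pi/2)$ near $w = \pi/2$—that's a zero, not a pole! The poles of $w/\tan w$ come from zeros of $\tan w$, i.e. $w = k\pi$, so the relevant one is $w = \pi$, giving $z = 4\pi^2 = (2\pi)^2$, radius $(2\pi)^2$, and $|b_k|^{1/k}\to (2\pi)^{-2}$, matching $2/(2\pi)^{2k}$. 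Near $w=\pi$: $\tan w \sim (w-\pi)$, so $w/\tan w \sim \pi/(w-\pi)$, a simple pole with residue $\pi$ in $w$; converting to $z$ via $z = 4w^2$ and extracting $[z^k]$ by Lemma~\ref{lem:asympt} yields exactly the constant $2$.

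Finally, for the two-sided bound $2(2\pi)^{-2k} \le |b_k| \le 4(2\pi)^{-2k}$ valid for $k\ge 1$, I would use the classical closed form $|b_k| = |B_{2k}|/(2k)! = 2\zeta(2k)/(2\pi)^{2k}$, which follows from Euler's formula $\zeta(2k) = \frac{(-1)^{k+1}B_{2k}(2\pi)^{2k}}{2(2k)!}$. Then the bound is immediate from $1 < \zeta(2k) \le \zeta(2) = \pi^2/6 < 2$ for all $k\ge 1$: the lower bound $\zeta(2k) > 1$ gives $|b_k| > 2(2\pi)^{-2k}$, and $\zeta(2k)\le\zeta(2) < 2$ gives $|b_k| < 4(2\pi)^{-2k}$. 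The main obstacle is not any single step but bookkeeping: making sure the substitution $z = 4w^2$ is tracked consistently through all three statements (so that singularities, residues, and the factor $4^k$ land in the right places), and confirming that the sign alternation of $B_{2k}$ is correctly used so that $\minmaj{b}^{\ast}$ really is $2 - w/\tan w$ rather than some other sign variant. Everything else is a direct appeal to Euler's formula and Lemma~\ref{lem:asympt}.
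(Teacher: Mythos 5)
Your argument is correct, and on two of the three claims it coincides with the paper's. For the closed form of $\minmaj{b}^{\ast}$, you and the paper both use the sign alternation $B_{2k} = (-1)^{k+1}|B_{2k}|$ together with $\tanh(\mathi w) = \mathi\tan w$, and your bookkeeping of the constant term ($b_0 = 1$ contributes $+1$, the sign-flipped tail contributes $1 - w/\tan w$, total $2 - w/\tan w$) is exactly right. For the two-sided bound, both you and the paper invoke Euler's formula $|b_k| = |B_{2k}|/(2k)! = 2\zeta(2k)/(2\pi)^{2k}$ and the elementary fact $1 < \zeta(2k) \le \zeta(2) = \pi^2/6 < 2$.

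The one place you diverge is the asymptotic $|b_k| \sim 2(2\pi)^{-2k}$: the paper reads it off directly from the zeta formula (since $\zeta(2k)\to 1$), whereas you take a detour through singularity analysis and Lemma~\ref{lem:asympt}. Your route does work—after the self-correction from $w=\pi/2$ (a zero of $w/\tan w$) to $w = \pi$ (the first pole), you land on the right dominant singularity $z = (2\pi)^2$—but it buys you nothing here, because you already have the zeta formula in hand for the bound, and the asymptotic is immediate from it. If you do go the singularity-analysis route, two pieces of bookkeeping should be made explicit: (i) why the singularity at $z = (2\pi)^2$ is unique on its circle even though $\minmaj{b}^{\ast}$ has two poles $w = \pm\pi$ at modulus $\pi$ (both map to the same $z$ because $\minmaj{b}^{\ast}$ is even); and (ii) the conversion from the local behaviour $\minmaj{b}^{\ast}(w) \sim (1 - w/\pi)^{-1}$ near $w = \pi$ to $\minmaj{b}(z) \sim 2(1 - z/(2\pi)^2)^{-1}$, which uses $1 - \sqrt{z}/(2\pi) \sim \tfrac12\bigl(1 - z/(2\pi)^2\bigr)$ as $z \to (2\pi)^2$ and is the source of the factor $2$ you "extract." Both gaps are small and fillable, but the zeta-function route avoids them entirely.
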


\begin{proof}
  The expression of $\minmaj{b}^{\ast} (w)$ can be deduced from that of
  $b^{\ast} (w)$ and the fact that $B_{2 n}$ has sign $(- 1)^{n + 1}$ for $n
  \geqslant 1$, using the relation $\tanh (iw) = i \tan (w)$. The other
  statements follow from the expression of Bernoulli numbers using the Riemann
  zeta function~{\cite[e.g.,][formula (25.6.2)]{DLMF}}.
\end{proof}

Let $\tilde{b}_k$ denote the approximate value of~$b_k$ computed
using~\eqref{eq:bernoulli exact rec}. We assume that the computed value of $n!$
is equal to $n! / (1 + q_n)$ with
$q_n = \theta_{n - 2}$, in the notation of
Section~\ref{sec:fp}, for $n\geqslant 2$,
and $q_0 = q_1 = 1$.
According to the modified standard model of
floating-point arithmetic, this holds true if $n!$ is computed as $(((2 \times
3) \times 4) \times \cdots \times n)$ and the working precision is at least
$\lceil \log_2 n \rceil$, even with no special treatment of multiplications by
powers of two. The local error introduced by one step of the
iteration~\eqref{eq:bernoulli exact rec} then behaves as follows.

\begin{lemma}
  \label{lem:bernoulli local}At every step
  of the iteration~\eqref{eq:bernoulli exact rec},
  the computed value~$\tilde{b}_k$ has the form
  \begin{equation}
    \tilde{b}_k = \frac{1 + s_k}{(2 k) !4^k} - \sum_{j = 0}^{k - 1}
    \frac{\tilde{b}_j  (1 + t_{k, j})}{(2 k + 1 - 2 j) !4^{k - j}}, \qquad
    \abs{s_k} \leqslant \hat{\theta}_{2 k}, \quad \abs{t_{k, j}} \leqslant
    \hat{\theta}_{3 (k - j) + 2} . \label{eq:bernoulli approx rec}
  \end{equation}
\end{lemma}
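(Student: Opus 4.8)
The statement is a bookkeeping exercise in the $\theta$-calculus of Section~\ref{sec:fp}: I would unwind one step of the recurrence~\eqref{eq:bernoulli exact rec} operation by operation, replace each elementary floating-point operation by a factor $1+\delta$ with $|\delta|\leqslant u$, and collect these factors into $s_k$ and the $t_{k,j}$ using the rules $(1+\theta_a)(1+\theta_b)=1+\theta_{a+b}$, $|\theta_m|\leqslant\hat\theta_m$ (inequality~\eqref{eq:theta hat}), and monotonicity of $(\hat\theta_m)_{m}$.

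First I would pin down the evaluation order that the bound refers to. Each factorial $n!$ is formed by repeated multiplication, so by hypothesis the stored value is $n!/(1+q_n)$ with $q_n=\theta_{n-2}$; each coefficient $1/((2k+1-2j)!\,4^{k-j})$ is obtained by computing the factorial, multiplying by $4^{k-j}$ (a power of two, hence exact), and taking a reciprocal; each summand is this coefficient times $\tilde b_j$; the $k$ summands are accumulated, say left to right; and the resulting sum is finally subtracted from the computed first coefficient $1/((2k)!\,4^k)$. For the first coefficient, computing $(2k)!$ contributes $\theta_{2k-2}$, the reciprocal one more factor $1+\delta$, and the concluding subtraction one more, so the numerator of the first term of~\eqref{eq:bernoulli approx rec} is $1+s_k$ with $s_k=\theta_{2k}$ and hence $|s_k|\leqslant\hat\theta_{2k}$. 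For the $j$-th summand, computing $(2k+1-2j)!$ contributes $\theta_{2(k-j)-1}$ (note $2k+1-2j-2=2(k-j)-1$), the multiplication by the power of two $4^{k-j}$ is exact, forming the coefficient and multiplying by $\tilde b_j$ contribute at most two further factors $1+\delta$, being carried through the running sum contributes at most $k-j$ addition factors, and the final subtraction one more; altogether the summand carries a factor $1+t_{k,j}$ with $t_{k,j}=\theta_{3(k-j)+2}$ at worst, that is, $|t_{k,j}|\leqslant\hat\theta_{3(k-j)+2}$.

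It remains to dispatch the trivial cases: for $k=0$ the sum is empty and $\tilde b_0=1$ is computed exactly, so $s_0=0=\hat\theta_0$; and since only indices with $2k\geqslant 2$ and $2k+1-2j\geqslant 3$ occur, the factorials $0!$ and $1!$ never enter the recurrence and the special values $q_0,q_1$ play no role. I do not expect any genuine difficulty here beyond discipline; the one point that deserves care is the summation accounting, where the single bound $\hat\theta_{3(k-j)+2}$ has to absorb simultaneously the $\approx 2(k-j)$ multiplications needed to build the large factorial $(2k+1-2j)!$ for small $j$ and the $\approx k-j$ additions that the same summand undergoes while the running sum is completed (with the $j=0$ summand actually seeing one fewer addition, which is harmless). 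One should check that these two contributions, plus the handful of remaining operations, stay within the stated margin for every $j$ and every $k\geqslant 1$; the slack in the exponents $2k$ and $3(k-j)+2$, rather than tighter values, leaves room for the exactness of the first multiplication in each factorial and for minor variants of the evaluation order.
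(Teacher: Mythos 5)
Your proof is correct and follows essentially the same operation-by-operation accounting as the paper, with the same evaluation order (factorial via repeated multiplication, exact scaling by a power of two, reciprocal, product with $\tilde b_j$, left-to-right accumulation, final subtraction) and the same tallies ($2(k-j)-1$ for the factorial, $2$ for reciprocal and product, at most $k-j$ for the running sum, and $1$ for the subtraction, totalling $3(k-j)+2$, respectively $2k$ for $s_k$). The paper's proof merely names the intermediate error factors ($r'_k$, $r_{k,j}$, $v_{k,i}$, $v'_k$) and writes out $1+t_{k,j}$ as their explicit product before invoking the bound $|\theta_m|\leqslant\hat\theta_m$.
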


\begin{proof}
  The computation of $b_0$ ($=1$) involves no rounding error.
  Assume $k \geqslant 1$, and
  first consider the term~$a_k = 1 / ((2 k) !4^k)$ outside the sum. By
  assumption, the computed value of $(2 k) !$ is $(2 k) ! / (1 + q_{2 k})$,
  and the multiplication by $4^k$ that follows is exact. Inverting the result
  introduces an additional rounding error. The computed value of the whole
  term is hence $a_k  (1 + r_k')$ where $r_k' = \theta_{2 k - 1}$. By the same
  reasoning, the term of index~$j$ in the sum is computed with a relative
  error~$r_{k, j} = \theta_{2 (k - j) + 1}$ for all $j, k$. If $v_{k, i}$
  denotes the relative error in the addition of the term of index~$i$ to the
  partial sum for $0 \leqslant j \leqslant i - 1$, the computed value of the
  sum is hence
  \[ \sum_{j = 0}^{k - 1} \frac{\tilde{b}_j  (1 + r_{k, j})}{(2 k + 1 - 2 j)
     !4^{k - j}}  \prod^{k - 1}_{i = j} (1 + v_{k, i}) . \]
  Taking into account the relative error $v'_k$ of the final subtraction leads
  us to~\eqref{eq:bernoulli approx rec}, with
  \[ 1 + s_k = (1 + r'_k)  (1 + v'_k), \qquad 1 + t_{k, j} = (1 + r_{k, j}) 
     (1 + v'_k)  \prod^{k - 1}_{i = j} (1 + v_{k, i}),
     \]
  which concludes the proof.
  \pagebreak[1]
\end{proof}

Let $\delta_k = \tilde{b}_k - b_k$. Comparison of \eqref{eq:bernoulli approx
rec}~with~\eqref{eq:bernoulli exact rec} yields
\[ \delta_k = \frac{s_k}{(2 k) !4^k} - \sum_{j = 0}^{k - 1} \frac{\delta_j +
   \tilde{b}_j t_{k, j}}{(2 k + 1 - 2 j) !4^{k - j}}, \]
which rearranges into
\begin{equation}
  \sum_{j = 0}^k \frac{\delta_j}{(2 k + 1 - 2 j) !4^{k - j}} = \frac{s_k}{(2
  k) !4^k} - \sum_{j = 0}^{k - 1} \frac{(b_j + \delta_j) t_{k, j}}{(2 k + 1 -
  2 j) !4^{k - j}} . \label{eq:bernoulli err rec}
\end{equation}
Using the bounds from Lemma~\ref{lem:bernoulli local}, with \ $\hat{\theta}_{3
(k - j) + 2}$ replaced by $\hat{\theta}_{4 (k - j) + 2}$ to obtain slightly
simpler expressions later, it follows that
\begin{equation}
  \left| \sum_{j = 0}^k \frac{\delta_j}{(2 k + 1 - 2 j) !4^{k - j}} \right|
  \leqslant \frac{\hat{\theta}_{2 k}}{(2 k) !4^k} + \sum_{j = 0}^{k - 1}
  \frac{(\abs{b_j} + \abs{\delta_j})  \hat{\theta}_{4 (k - j) + 2}}{(2 k + 1 - 2
  j) !4^{k - j}} . \label{eq:bernoulli err ineq}
\end{equation}
Let us introduce the auxiliary series
\[ C (z) = \sum_{k = 0}^{\infty} \frac{z^k}{(2 k) !4^k} = \cosh w, \quad S (z)
   = \sum_{k = 0}^{\infty} \frac{z^k}{(2 k + 1) !4^k} = \frac{\sinh w}{w},
   \quad \check{S} (z) = \frac{w}{\sin w}, \]
\[ \tilde{C} (z) = \hat{\theta}^{(2, 0)} (z) \odot C (z), \quad \tilde{S} (z)
   = \hat{\theta}^{(4, 2)} (z) \odot (S (z) - 1) . \]
The inequality~\eqref{eq:bernoulli err ineq} (note that the sum on the
right-hand side stops at $k - 1$) translates into
\[ \delta (z) S (z) \ll \tilde{C} (z) + \tilde{S} (z)  \left( \minmaj{b} (z) +
   \minmaj{\delta} (z) \right) . \]
Since $\check{S} (z)$ has nonnegative coefficients~{\cite[(4.19.4)]{DLMF}}, we
have $S (z)^{- 1} = iw / \sin (iw) \ll \check{S} (z)$, hence
\[ \delta (z) \ll \check{S} (z)  \tilde{C} (z) + \check{S} (z)  \tilde{S} (z) 
   \minmaj{b} (z) + \check{S} (z)  \tilde{S} (z)  \minmaj{\delta} (z) . \]
As $\tilde{S} (z) = \Omicron (z)$, Lemma~\ref{lem:maj linear} applies and
yields
\begin{equation}
  \delta (z) \ll \hat{\delta} (z) \assign \frac{\check{S} (z)  \tilde{C} (z) +
  \check{S} (z)  \tilde{S} (z)  \minmaj{b} (z)}{1 - \check{S} (z)  \tilde{S}
  (z)} . \label{eq:bernoulli ineq sol}
\end{equation}
Using Lemma~\ref{lem:hadamard theta hat} to rewrite the Hadamard products, we
have
\begin{align}
  \tilde{C} (z) &= C (a^2 z) - C (z) = \cosh (aw) - \cosh (w),
  \label{eq:benoulli Ct} \\
  \tilde{S} (z) &= a^2 S (a^4 z) - S (z) - (a^2 - 1) = \frac{\sinh (a^2 w) -
  \sinh (w)}{w} - (a^2 - 1), \label{eq:bernoulli St}
\end{align}
where $a = 1 + u$. In addition, Lemma~\ref{lem:bernoulli abs} gives a formula
for~$\minmaj{b} (z)$. Thus \eqref{eq:bernoulli ineq sol} yields an explicit, if
complicated, majorant series for $\delta (z)$.

It is not immediately clear how to extract a readable bound on $\delta_k$ in
the style of \eqref{eq:toy bound}. However, we can already prove an asymptotic
version of Zimmermann's observation. The calculations leading to Propositions
\ref{prop:bernoulli asympt}~and~\ref{prop:bernoulli main} can be checked with
the help of a computer algebra system. A~worksheet that illustrates how to do
that using Maple is provided in the supplementary material.

\begin{proposition}
  \label{prop:bernoulli asympt}When $k$~is fixed and for small enough~$u$, the
  relative error $\eta_k = \delta_k / b_k$ satisfies $\abs{\eta_k} \leqslant C_k
  u$ for some~$C_k$. In addition, the constants $C_k$ can be chosen such that
  $C_k = \Omicron (k)$ as $k \rightarrow \infty$.
\end{proposition}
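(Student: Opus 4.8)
The plan is to extract the asymptotic statement from the explicit majorant~\eqref{eq:bernoulli ineq sol} by expanding it in powers of~$u$ with $k$ held fixed. Write $\hat{\delta}(z) = N(z)/(1 - M(z))$ with $N = \check{S}\,(\tilde{C} + \tilde{S}\,\minmaj{b})$ and $M = \check{S}\,\tilde{S}$. First I would note that, by~\eqref{eq:benoulli Ct} and~\eqref{eq:bernoulli St}, both $\tilde{C}$ and $\tilde{S}$ vanish identically when $a = 1$, that is, when $u = 0$; since their coefficients are polynomials in~$u$, so are the $N_k(u)$ and $M_k(u)$, and all of them vanish at $u = 0$. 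Because $\tilde{S} = \Omicron(z)$, hence $M = \Omicron(z)$, as a power series, for each fixed~$k$ only finitely many terms of $1/(1 - M) = \sum_{\ell \geqslant 0} M^{\ell}$ contribute to the coefficient of~$z^k$, and those with $\ell \geqslant 1$ carry a factor $M^{\ell} = \Omicron(u^{\ell})$; thus $\hat{\delta}_k(u)$ is a polynomial in~$u$ with no constant term, of the form $\hat{\delta}_k(u) = N_k(u) + \Omicron(u^2)$. In particular $\hat{\delta}_k(u)/u \to \hat{\delta}_k'(0) \backassign L_k$ as $u \to 0$, where the prime denotes $\mathd/\mathd u$.

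Next I would identify $L_k$. As $M$ and $N$ vanish at $u = 0$, one has $\partial_u\hat{\delta}\big|_{u=0} = \partial_u N\big|_{u=0} \backassign L(z)$. Differentiating~\eqref{eq:benoulli Ct}--\eqref{eq:bernoulli St} with $a = 1 + u$ gives $\partial_u\tilde{C}\big|_{u=0} = w\sinh w$ and $\partial_u\tilde{S}\big|_{u=0} = 2(\cosh w - 1)$, while $\check{S}$ and $\minmaj{b}$ do not involve~$u$. Using $\check{S}^{\ast}(w) = w/\sin w$ and the expression of $\minmaj{b}^{\ast}$ in Lemma~\ref{lem:bernoulli abs}, this yields
\[ L^{\ast}(w) = \frac{w}{\sin w}\left( w\sinh w + 2(\cosh w - 1)\Bigl(2 - \frac{w}{\tan w}\Bigr) \right), \]
an even function of~$w$ regular at the origin, so $L(z) = \sum_k L_k z^k$, and $L \gg 0$ since it is assembled by sums and products from nonnegative-coefficient series.

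The heart of the argument is a singularity analysis of~$L(z)$. In the $w$\mbox{-}plane the only singularities of $L^{\ast}(w)$ are poles at $w \in \pi\mathbb{Z}\setminus\{0\}$, coming from $1/\sin w$ and $w\cos w/\sin w$; under $z = 4w^2$ these become $z = 4\pi^2, 16\pi^2, \dots$, so $L(z)$ is analytic for $|z| < 4\pi^2$ with a single dominant singularity at $z = 4\pi^2$. A local expansion near $w = \pi$ (where $\sin w$ has a simple zero while $\cosh w - 1 \neq 0$, so that the coefficient of $(w - \pi)^{-2}$ in $L^{\ast}$, which equals $2\pi^2(\cosh\pi - 1)$, does not vanish) shows this is genuinely a \emph{double} pole, with no order drop by cancellation; and since $z - 4\pi^2 \sim 8\pi\,(w - \pi)$ the pole order in~$z$ is again~$2$. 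Lemma~\ref{lem:asympt}, whose slit-disk hypothesis is met because the next singularities lie at $z = 16\pi^2$, then gives $L_k = \Omicron\bigl(k\,(4\pi^2)^{-k}\bigr)$. Combined with $|b_k| \geqslant 2\,(4\pi^2)^{-k}$ from Lemma~\ref{lem:bernoulli abs}, this yields $L_k/|b_k| = \Omicron(k)$. To finish: for each fixed~$k$, $\hat{\delta}_k(u)/(|b_k|\,u) \to L_k/|b_k|$ as $u \to 0$, so there is $u_k > 0$ with $\hat{\delta}_k(u) \leqslant C_k\,|b_k|\,u$ for $0 < u \leqslant u_k$, where $C_k \assign 1 + L_k/|b_k| = \Omicron(k)$; since $|\eta_k| = |\delta_k|/|b_k| \leqslant \hat{\delta}_k(u)/|b_k|$ by~\eqref{eq:bernoulli ineq sol}, this is the claim.

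The step I expect to be the real obstacle is this singularity analysis: checking that the dominant singularity at $z = 4\pi^2$ is exactly a double pole (no hidden cancellation lowering its order) and that it is the only singularity on its circle, so that Lemma~\ref{lem:asympt} applies. The remaining ingredients --- the polynomial dependence on~$u$, the truncation of $1/(1 - M)$, the passage from $\hat{\delta}_k$ to~$\eta_k$ --- are routine precisely because $k$ is fixed throughout. I would also stress that this only gives the ``$u \to 0$, $k$ fixed'' form of the result; the uniform, fully explicit bound in~$u$ and~$k$ promised earlier is what Proposition~\ref{prop:bernoulli main} is meant to establish by a more careful route.
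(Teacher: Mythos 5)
Your proof is correct and takes essentially the same route as the paper: expand $\hat{\delta}(z)$ to first order in~$u$, identify the coefficient series (your $L^{\ast}(w)$ is algebraically identical to the paper's $\hat{\xi}^{\ast}(w)$), locate its dominant double pole at $z = 4\pi^2$, apply Lemma~\ref{lem:asympt}, and compare with $|b_k| \geqslant 2(2\pi)^{-2k}$. The only difference is that you spell out a few points the paper leaves implicit (the truncation argument for $1/(1-M)$, the non-vanishing of the leading coefficient $2\pi^2(\cosh\pi-1)$ at the pole, and the slit-disk hypothesis of Lemma~\ref{lem:asympt}), which is a welcome but not substantively different level of detail.
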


\begin{proof}
  As $u$ tends to zero, we have
  \begin{align*}
    \hat{\delta} (z) & = \left( \frac{2 (1 - \cosh w) \cos (w)}{w^{- 2} \sin
    (w)^2} + \frac{4 (\cosh w - 1) + w \sinh w}{w^{- 1} \sin w} \right) u +
    \Omicron (u^2)\\
    & \backassign \hat{\xi} (z) u + \Omicron (u^2),
  \end{align*}
  where the coefficients are to be interpreted as formal series in~$z$. Hence,
  for fixed~$k$, the error~$\delta_k$ is bounded by $\hat{\xi}_k u + \Omicron
  (u^2)$ as $u \rightarrow 0$. The function $\hat{\xi}^{\ast} (w)$ is
  meromorphic, with double poles at $w = \pm \pi$, corresponding for
  $\hat{\xi} (z)$ to a unique pole of minimal modulus (also of order two) at
  $z = 4 \pi^2$. This implies (by Lemma~\ref{lem:asympt}) that $(2 \pi)^{2 k} 
  \hat{\xi}_k = \Omicron (k)$ as $k \rightarrow \infty$. The result follows
  using the growth estimates from Lemma~\ref{lem:bernoulli abs}.
\end{proof}

In other words, there exist a constant $A$ and a function~$R$ such that
$\eta_k \leqslant Aku + R (k, u)$, where $R (k, u) = o (u)$ as~$u \rightarrow
0$ for fixed~$k$, but $R (k, u)$ might be unbounded if $k$ tends to infinity
while $u$ tends to zero. (Remark~\ref{rk:bernoulli exp} below shows that an
exponential dependency in~$k$ is in fact unavoidable.) To get a bound valid
for all $u$~and~$k$ in a reasonable region, let us study the denominator
of~\eqref{eq:bernoulli ineq sol} more closely. A similar argument
applied to $\hat{\xi}$ in the place of~$\hat{\delta}$ would make the
constant~$A$ explicit.

\begin{lemma}
  \label{lem:bernoulli dom root}For small enough~$u \geqslant 0$, the function
  $h : w \mapsto \tilde{S}^{\ast} (w) - \check{S}^{\ast} (w)^{- 1}$ has
  exactly two simple zeros~$\pm \alpha = \pm \pi / (1 + \varphi (u))$ closest
  to the origin, with
  \begin{equation}
    \varphi (u) = 2 (\cosh (\pi) - 1) u + \Omicron (u^2), \quad u \rightarrow
    0. \label{eq:bernoulli asympt sing}
  \end{equation}
  Furthermore, if $u \leqslant 2^{- 16}$, then one has $0 \leqslant \varphi
  (u) \leqslant 2 (\cosh (\pi) - 1) u$, and $h$~has no other zero than $\pm
  \alpha$ in the disk $\abs{w} < \rho \assign 6.2$.
\end{lemma}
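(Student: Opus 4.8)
The statement concerns the location of the zeros of $h(w) = \tilde S^{\ast}(w) - \check S^{\ast}(w)^{-1}$, which governs the smallest singularity of the majorant $\hat\delta(z)$ in \eqref{eq:bernoulli ineq sol}. Using \eqref{eq:bernoulli St} and $\check S^{\ast}(w) = w/\sin w$, we have explicitly
\[
  h(w) = \frac{\sinh(a^2 w) - \sinh w}{w} - (a^2 - 1) - \frac{\sin w}{w},
  \qquad a = 1 + u.
\]
First I would rewrite this as $w\,h(w) = \sinh(a^2 w) - \sinh w - (a^2-1)w - \sin w$, an entire function of $w$, so that the zeros of $h$ in $\mathbb{C}\setminus\{0\}$ are exactly the nonzero zeros of $w\,h(w)$ (one checks $w\,h(w)$ vanishes to order $3$ at the origin, so $h$ is regular and nonzero there). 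At $u=0$ this reduces to $-\sin w$, whose nonzero zeros closest to the origin are $\pm\pi$, simple. The first assertion — existence of exactly two simple zeros $\pm\alpha$ near the origin, with $\alpha = \pi/(1+\varphi(u))$ and the asymptotics \eqref{eq:bernoulli asympt sing} — then follows from a standard implicit-function / Rouché perturbation argument: for small $u$, $w\,h(w)$ is a small analytic perturbation of $-\sin w$, so it has exactly one simple zero in a small disk around each of $\pi$ and $-\pi$; the symmetry $h(-w) = h(w)$ (clear from the formula, since $\sinh$ is odd and $\sin$ is odd) forces these to be $\pm\alpha$. Differentiating the defining relation $h(\pi/(1+\varphi)) = 0$ with respect to $u$ at $u = 0$, using $a^2 = 1 + 2u + u^2$, yields the coefficient $2(\cosh\pi - 1)$ after a short computation.

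The quantitative part — that $0 \leqslant \varphi(u) \leqslant 2(\cosh\pi-1)u$ for $u \leqslant 2^{-16}$, and that $h$ has no other zero in $|w| < 6.2$ — is where the real work lies. For the two-sided bound on $\varphi$, I would not rely on the asymptotic expansion but instead argue directly: set $g(w) = w\,h(w)$ and show $g$ is strictly negative just inside $\pi/(1+2(\cosh\pi-1)u)$ and strictly positive just outside $\pi$ (or vice versa, tracking signs carefully), so that by the intermediate value theorem the zero $\alpha$ lies in the stated interval; the sign computations amount to evaluating $g$ at two explicit points and bounding the elementary-function values, which is exactly the kind of finite verification the paper elsewhere delegates to a computer algebra system. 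The nonnegativity $\varphi(u) \geqslant 0$ (i.e.\ $\alpha \leqslant \pi$) should follow from monotonicity: $g(w) = \sinh(a^2 w) - \sinh w - (a^2-1)w - \sin w$ and one checks that on $(0,\pi]$ the perturbation terms push the first zero inward, e.g.\ by showing $g'(w) > 0$ near $w = \pi$ uniformly in small $u$, or more robustly by a monotonicity-in-$u$ argument: $\partial g/\partial u$ has a definite sign on the relevant range.

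For the "no other zero in $|w| < \rho$" claim with $\rho = 6.2$, I would use Rouché's theorem on the circle $|w| = \rho$: compare $g(w) = w\,h(w)$ with $-w\sin w$ (which, since $6.2 < 2\pi$, has exactly the two zeros $\pm\pi$ inside the disk, both simple, plus the triple zero at the origin — matching $g$'s zero structure at $0$). It then suffices to bound the perturbation $\sinh(a^2 w) - \sinh w - (a^2-1)w$ in modulus on $|w| = 6.2$ by $\min_{|w|=6.2}|w\sin w|$. Writing $\sinh(a^2 w) - \sinh w = \int_w^{a^2 w}\cosh\zeta\,\mathd\zeta$, the perturbation is $\Omicron(u)$ with an explicit constant involving $\cosh(a^2\rho)$ and $\sinh\rho$; one checks that for $u \leqslant 2^{-16}$ this is strictly smaller than the (strictly positive) minimum of $|w\sin w|$ on the circle of radius $6.2$ — a minimum that is bounded away from $0$ precisely because $6.2$ avoids the zeros $\{0, \pm\pi, \pm 2\pi\}$ of $w\sin w$, the nearest being $2\pi \approx 6.283$. \textbf{The main obstacle} is making these numeric inequalities genuinely rigorous: the bound $u \leqslant 2^{-16}$ is tight enough that the Rouché margin on $|w| = 6.2$ is not generous (since $6.2$ is close to $2\pi$), so one must bound $\cosh(a^2\cdot 6.2)$ and $\min_{|w|=6.2}|w\sin w|$ with enough care — most cleanly via interval arithmetic or a CAS-assisted verification, in the spirit of the Maple worksheet mentioned for Propositions \ref{prop:bernoulli asympt} and \ref{prop:bernoulli main}.
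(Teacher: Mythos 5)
Your overall strategy closely mirrors the paper's: perturb the $u=0$ case via the implicit function theorem and extract \eqref{eq:bernoulli asympt sing} by implicit differentiation; locate $\alpha$ in an explicit interval by a sign change; and exclude other zeros in $|w|<\rho$ by Rouch\'e. However, the details of your Rouch\'e step contain a genuine error, and a secondary step is vaguer than the paper's.

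\textbf{The Rouch\'e comparison is wrong as stated.} You claim $w\,h(w)$ ``vanishes to order $3$'' at the origin and that $-w\sin w$ has a ``triple zero'' there, ``matching $g$'s zero structure at $0$.'' Neither claim is correct. Expanding,
\[
  w\,h(w) = \sinh(a^2 w) - \sinh w - (a^2-1)w - \sin w = -w + \tfrac{a^6}{6}w^3 + \Omicron(w^5),
\]
a \emph{simple} zero at the origin (equivalently, $h(0)=\tilde S^{\ast}(0)-\check S^{\ast}(0)^{-1}=0-1=-1\neq 0$), while $-w\sin w = -w^2+\Omicron(w^4)$ has a \emph{double} zero. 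The mismatch is fatal: if the perturbation bound you describe did hold on $|w|=\rho$, Rouch\'e would force $w\,h(w)$ to have \emph{four} zeros in $|w|<\rho$ counted with multiplicity, contradicting the three zeros it actually has (one at $0$, plus $\pm\alpha$). Moreover, the perturbation $\sinh(a^2 w)-\sinh w-(a^2-1)w$ that you write down is the difference between $w\,h(w)$ and $-\sin w$, not $-w\sin w$, and the corresponding target minimum is $\min_{|w|=\rho}|\sin w|=|\sin\rho|$, not $\min_{|w|=\rho}|w\sin w|=\rho|\sin\rho|$; the available margin is therefore tighter than you indicate (roughly $0.047$ against $0.083$, not against $0.5$). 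The paper avoids all of this by comparing $h$ directly to $\check S^{\ast}(w)^{-1}=\sin(w)/w$ --- both functions are nonzero at the origin and have exactly two zeros in the disk at $u=0$ --- and bounding $|\tilde S^{\ast}(w)|$ against $|\sin(w)/w|$.

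\textbf{The lower bound $\varphi(u)\geqslant 0$.} Your suggested ``monotonicity-in-$u$'' argument may work, but the paper's route is both simpler and sharper: $\tilde S^{\ast}$ has nonnegative Taylor coefficients, so $h(\pi)=\tilde S^{\ast}(\pi)\geqslant 0$ immediately; paired with $h(w_0)<0$ (where $w_0=\pi/(1+2(\cosh\pi-1)u)$), this brackets $\alpha$ in $[w_0,\pi]$. Also note that establishing $h(w_0)<0$ is not an evaluation at fixed points as you suggest, since $w_0$ depends on $u$: the paper constructs an explicit rational function of $u$ that bounds $h(w_0)$ from above and verifies its negativity on $(0,2^{-16}]$.
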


\begin{proof}
  To start with, observe that when $u = 0$, the term $\tilde{S}^{\ast} (w)$ in
  the definition of~$h$ vanishes identically, leaving us with $h (w) =
  \check{S}^{\ast} (w)^{- 1} = w^{- 1} \sin w$. The zeros of~$1 /
  \check{S}^{\ast}$ closest to the origin are located at $w = \pm \pi$, the
  next closest, at $w = \pm 2 \pi$ and hence outside the disk $\abs{w} < \rho$.
  
  Let us focus on the zero at $w = \pi$. Since $h' (\pi) = 1 / \pi$ for $u =
  0$, the Implicit Function Theorem applies and shows that, locally, the zero
  varies analytically with~$u$. One obtains the asymptotic
  form~\eqref{eq:bernoulli asympt sing} by implicit differentiation.
  
  We turn to the bounds on $\varphi (u)$. The power series expansion
  of~$\tilde{S}^{\ast}$ with respect to~$w$ has nonnegative coefficients, showing that $h (\pi) =
  \tilde{S}^{\ast} (\pi) \geqslant 0$. Now, with $a = 1 + u$ as
  in~\eqref{eq:bernoulli St}, isolate the first nonzero term of that series and
  write
  \[ \tilde{S}^{\ast} (w) = \frac{a^6 - 1}{6} w^2 + \frac{G (a^2 w) - G
     (w)}{w}, \qquad G (w) = \sinh (w) - w - \frac{w^3}{6} . \]
  Let $K = 2 (\cosh (\pi) - 1)$ and $w_0 = \pi / (1 + Ku)$. One has
  \begin{equation} \label{eq:bernoulli diff G}
     G (a^2 w_0) - G (w_0) \leqslant (a^2 - 1) w_0 \max_{1 \leqslant t
     \leqslant a^2} \abs{G' (tw_0)} .
  \end{equation}
  Note that $a^2 w_0 \leqslant \pi$ for all~$u \leqslant 1 / 2$. Since $G (w) \gg 0$,
  it follows that
  \[ G (a^2 w_0) - G (w_0) \leqslant (a^2 - 1) w_0 G' (\pi), \]
  therefore the term $\tilde{S}^{\ast} (w_0)$ in $h (w_0)$ satisfies
  \[ \tilde{S}^{\ast} (w_0) \leqslant \frac{a^6 - 1}{6} w_0^2 + (a^2 - 1) G'
     (\pi) . \]
  As for the other term, the inequality $\sin (\pi - v) \geqslant v - v^3 / 6$
  ($u \geqslant 0$)
  applied to $v = \pi - w_0 = Kuw_0 $ yields
  \[ - \check{S}^{\ast} (w_0)^{- 1} \leqslant - Ku + (Ku)^3  \frac{w_0^2}{6} .
  \]
  Collecting both contributions and substituting in the value $G' (\pi) = (K -
  \pi^2) / 2$, we obtain
  \[ h (w_0) \leqslant \hat{h} := \frac{a^6 - 1}{6}  \frac{\pi^2}{(1 + Ku)^2} +
     \frac{a^2 - 1}{2}  (K - \pi^2) - Ku + (Ku)^3  \frac{w_0^2}{6} . \]
  The right-hand side is an explicit rational function of~$u$, satisfying
  \[ \hat{h} \sim (- 2 \pi^2  (K - 1) + K / 2) u^2 \]
  as $u \rightarrow 0$. One can check that it remains negative for $0 < u
  \leqslant 2^{- 16}$. Thus, one has $h (w_0) < 0 \leqslant h (\pi)$, and $h$
  has a zero in the interval $w \in [w_0, \pi]$, that is, a zero of the form $\alpha
  = \pi / (1 + \varphi (u))$ with $0 \leqslant \varphi (u) \leqslant Ku$, as
  claimed. The corresponding statement for $- \alpha$ follows by parity.
  
  It remains to show that $\pm \alpha$ are the only zeros of~$h$ in the
  disk~$\abs{w} < \rho$. We do it by comparing them to the zeros of~$1 /
  \check{S}^{\ast}$ using Rouché's theorem.
  
  To this end, note first that the expression $\abs{\sin (\rho e^{i \theta})}$
  reaches its minimum for $\theta \in [0, \pi / 2]$ when~$\theta = 0$. Indeed,
  one has $\abs{\sin (\rho e^{i \theta})}^2 = \cosh (\rho \sin \theta)^2 - \cos
  (\rho \cos \theta)^2$. The term $\cosh (\rho \sin \theta)^2$ is strictly
  increasing on the whole interval. For $\theta \leqslant \theta_0 \assign
  \arccos (3 \pi / (2 \rho))$, the term $- \cos (\rho \cos \theta)^2$ is
  increasing as well, so that $\abs{\sin (\rho e^{i \theta})} \geqslant \abs{\sin
  \rho}$ in that range, whereas for $\theta_0 \leqslant \theta \leqslant
  \pi$, we have $\abs{\sin (\rho e^{i \theta})}^2 \geqslant \cosh (\rho \sin
  \theta_0)^2 - 1 > 1 \geqslant \abs{\sin \rho}^2$. It follows that $\abs{\sin w}
  \geqslant \abs{\sin \rho}$, and hence $\abs{\check{S}^{\ast} (w)^{- 1}} \geqslant
  \rho^{- 1}  \abs{\sin \rho}$, for $w = \rho e^{i \theta}$ with $0 \leqslant
  \theta \leqslant \pi / 2$, and by symmetry on the whole circle~$\abs{w} =
  \rho$.
  
  Turning to $\tilde{S}^{\ast} (w)$
  and reasoning as in~\eqref{eq:bernoulli diff G}
  starting from~\eqref{eq:bernoulli St},
  one can write
  \[ \abs{\tilde{S}^{\ast} (w)} \leqslant (a^2 - 1) \sup_{1 \leqslant t
     \leqslant a^2} \abs{\cosh (tw) - 1} \leqslant (a^2 - 1) \cosh (a^2 \rho)
  \]
  for $\abs{w} = \rho$.
  Comparing these bounds leads to
  \[ \abs{\tilde{S}^{\ast} (w)} < 10^{- 2} < \abs{\check{S}^{\ast} (w)^{- 1}},
     \qquad \abs{w} = \rho, \quad 0 \leqslant u \leqslant 2^{- 16} . \]
  Therefore, $h (w)$ has the same number of zeros as $w^{- 1} \sin w$ inside
  the circle.
\end{proof}

\begin{proposition}
  \label{prop:bernoulli main}For all $0 < u \leqslant 2^{- 16}$, we have
  $\tilde{b}_k = b_k  (1 + \eta_k)$ where
  \[ \abs{\eta_k} \leqslant (1 + 21.2 u)^k  (1.1 k + 446) u. \]
\end{proposition}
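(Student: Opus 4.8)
The plan is to read off an explicit bound on the coefficients $\hat{\delta}_k$ of the majorant $\hat{\delta}(z)$ from~\eqref{eq:bernoulli ineq sol} (with the closed forms~\eqref{eq:benoulli Ct} and~\eqref{eq:bernoulli St} substituted in), and then to conclude via
\[ |\eta_k| = \frac{|\delta_k|}{|b_k|} \leqslant \frac{\hat{\delta}_k}{|b_k|} \leqslant \frac{(2\pi)^{2k}}{2}\,\hat{\delta}_k, \]
using the lower bound $|b_k| \geqslant 2\,(2\pi)^{-2k}$ from Lemma~\ref{lem:bernoulli abs} (the case $k=0$ being trivial, since $\tilde{b}_0 = b_0$). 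So everything reduces to bounding $\hat{\delta}_k$, and the restriction $u \leqslant 2^{-16}$ will be inherited from Lemma~\ref{lem:bernoulli dom root}.

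To bound $\hat{\delta}_k$ I would pass to the variable $w = \sqrt{z}/2$ and exploit the meromorphy of $\hat{\delta}^{\ast}(w)$ on the disk $|w| < \rho$, $\rho = 6.2$. Its denominator $1 - \check{S}^{\ast}(w)\tilde{S}^{\ast}(w) = -\check{S}^{\ast}(w)\,h(w)$ vanishes there, by Lemma~\ref{lem:bernoulli dom root}, only at the simple zeros $\pm\alpha = \pm\pi/(1+\varphi(u))$ of $h$, while the factors $\check{S}^{\ast}$ and $\minmaj{b}^{\ast}$ in the numerator of~\eqref{eq:bernoulli ineq sol} contribute a pole at $w = \pm\pi$; a short check shows these are the only singularities of $\hat{\delta}^{\ast}$ in $|w| < \rho$, all of them simple. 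I would then split off their principal parts,
\[ \hat{\delta}^{\ast}(w) = \frac{c_{\alpha}(u)}{\alpha^2 - w^2} + \frac{c_{\pi}(u)}{\pi^2 - w^2} + \psi(w), \]
with $\psi$ holomorphic on $|w| \leqslant \rho$. Returning to $z = 4 w^2$, the two rational terms contribute $c_{\alpha}(u)\,\alpha^{-2}(4\alpha^2)^{-k} + c_{\pi}(u)\,\pi^{-2}(4\pi^2)^{-k}$ to $\hat{\delta}_k$, and Cauchy's estimate on $|w| = \rho$ bounds the contribution of $\psi$ by $\bigl(\max_{|w|=\rho}|\psi|\bigr)\,\rho^{-2k}$.

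The decisive point---already implicit in the proof of Proposition~\ref{prop:bernoulli asympt}---is that the residues $c_{\alpha}(u)$ and $c_{\pi}(u)$, each of size $\Theta(1)$ as $u \to 0$, very nearly cancel: the perturbed pole $\alpha$ lies at distance $\Omicron(\varphi(u)) = \Omicron(u)$ from $\pi$, the combined principal part is $\Omicron(u)$ on $|w| = \rho$, and---what governs the coefficients---the sum of the two contributions above collapses to a quantity of the shape $\bigl((1+\varphi(u))^{2k+1} - 1\bigr)(4\pi^2)^{-k}$ up to further $\Omicron(u)$-small terms. Feeding in $(1+x)^{m} - 1 \leqslant m\,x\,(1+x)^{m-1}$, the explicit bound $0 \leqslant \varphi(u) \leqslant 2(\cosh\pi - 1)\,u$ of Lemma~\ref{lem:bernoulli dom root}, a uniform estimate $\max_{|w|=\rho}|\psi| = \Omicron(u)$ obtained by bounding $\check{S}^{\ast}$, $\minmaj{b}^{\ast}$, $\tilde{C}^{\ast}$, $\tilde{S}^{\ast}$ on $|w| = \rho$, and the fact that $\rho$ is close to $2\pi$ (so the ratio $(2\pi/\rho)^{2k}$ stays harmless), one reaches $\hat{\delta}_k \leqslant (A k + B)\,u\,(1+\varphi(u))^{ck}\,(2\pi)^{-2k} + (\text{smaller})$ for explicit $A$, $B$, $c$. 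Dividing by $|b_k| \geqslant 2\,(2\pi)^{-2k}$ and bounding the surviving exponential factor using $2(\cosh\pi - 1) \leqslant 21.2$ then yields the stated inequality once the constants are optimized.

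The main obstacle is exactly the explicit constant bookkeeping behind the last paragraph: (i)~quantifying how tightly $c_{\alpha}(u)$ and $c_{\pi}(u)$ cancel---this near-cancellation is the mechanism that keeps the bound linear in $k$ with a small slope instead of merely $\Theta(1)$---and (ii)~producing a clean uniform bound on $\max_{|w|=\rho}|\psi(w)|$, which forces one to estimate all the auxiliary functions on the circle $|w| = \rho$ while tracking their dependence on $u$. Neither step is conceptually hard, but both are heavy enough that the computation is best carried out with the help of the Maple worksheet accompanying the paper.
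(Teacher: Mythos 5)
Your approach coincides with the paper's: locate the four simple poles of $\hat{\delta}^{\ast}$ at $\pm\alpha$, $\pm\pi$ in $|w|<\rho$, split off the principal parts, exploit the near-cancellation between the $\alpha$- and $\pi$-contributions (which is what keeps the slope in $k$ small), bound the analytic remainder via Cauchy's inequality, and finish with computer-assisted numerical estimates of the constants. One slip worth fixing: the change of variable $z=4w^2$ costs a factor $4^{-k}$, so the Cauchy estimate on $|w|=\rho$ bounds the remainder's contribution to $\hat{\delta}_k$ by $\bigl(\max_{|w|=\rho}|\psi^{\ast}|\bigr)(2\rho)^{-2k}$ rather than $\rho^{-2k}$; after dividing by $|b_k|\geqslant 2(2\pi)^{-2k}$, the surviving ratio is $(\pi/\rho)^{2k}$, which decays since $\rho>\pi$, whereas the ratio $(2\pi/\rho)^{2k}$ you invoke would in fact grow (since $\rho<2\pi$) and is not harmless. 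The paper instead takes the Cauchy circle of radius $\lambda\pi$ with $\lambda=\sqrt{3/2}$, giving a decay factor $(2/3)^k$, but any radius strictly between $\pi$ and $2\pi$ would do; this is a detail in the bookkeeping.
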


\begin{proof}
  We use the notation of the previous lemma. In the
  expression~\eqref{eq:bernoulli ineq sol} of~$\hat{\delta} (z)$, the series
  $\tilde{S} (z)$ and $\tilde{C} (z)$ define entire functions, while
  $\minmaj{b}^{\ast} (w) = 2 - w / \tan w$ is a meromorphic function with
  poles at $w \in \pi \mathbb{Z}$. These observations combined with
  Lemma~\ref{lem:bernoulli dom root} imply that $\hat{\delta}^{\ast}$~is
  meromorphic in the disk $\abs{w} < 6.2$, with exactly four simple poles
  located at $w = \pm \alpha$ and $w = \pm \pi$.
  Only the first two poles depend on~$u$.
  
  One has $\hat{\delta}^{\ast} (w) \sim - (1 - w / \pi)^{- 1}$ as $w
  \rightarrow \pi$. Let $F^{\ast}$ denote the derivative of $w \mapsto w
  \tilde{S}^{\ast} (w)$. With the help of a computer algebra system, it is not
  too hard to determine that the singular expansion as $w \rightarrow \alpha$
  reads
  \[ \hat{\delta}^{\ast} (w) \sim \frac{\tilde{C}^{\ast} (\alpha^2) - \cos
     \alpha + 2 \alpha^{- 1} \sin \alpha}{F^{\ast}  (\alpha) - \cos \alpha} 
     \frac{1}{1 - w / \alpha} \backassign \frac{R (u)}{1 - w/\alpha} \]
  and one has $R (u) = 1 + (2 \cosh \pi - 2 - \pi \sinh \pi) u + \Omicron
  (u^2)$ as $u \rightarrow 0$. The expansions at $- \alpha$ and $- \pi$ follow
  since $\hat{\delta}^{\ast}$ is an even function. Set
  \[ \hat{\delta}^{\ast} (w) = \frac{R (u)}{1 - w / \alpha} + \frac{R (u)}{1 +
     w / \alpha} - \frac{1}{1 - w / \pi} - \frac{1}{1 + w / \pi} + g^{\ast}
     (u, w), \]
  where $g^{\ast} (u, \cdot)$ now is analytic for $\abs{w} < 6.2 \approx 1.97
  \pi$ and vanishes identically when $u = 0$. Since
  \[ \frac{R (u)}{1 \pm w / \alpha} - \frac{1}{1 \pm w / \pi} = \frac{R (u) - 1}{1
     \pm w / \alpha} \mp \frac{\varphi (u) w / \pi}{(1 \pm w / \alpha)  (1 \pm w /
     \pi)}, \]
  we have
  \[ \hat{\delta}^{\ast} (w) = \frac{2 (R (u) - 1)}{1 - (w / \alpha)^2} +
     \frac{2 \varphi (u)  (2 + \varphi (u))  (w / \pi)^2}{(1 - (w / \alpha)^2)
     (1 - (w / \pi)^2)} + g^{\ast} (u, w), \]
  that is,
  \[ \hat{\delta} (z) = \frac{2 (R (u) - 1)}{1 - z / (2 \alpha)^2} + \frac{2
     \varphi (u)  (2 + \varphi (u)) z / (2 \pi)^2}{(1 - z / (2 \alpha)^2)  (1
     - z / (2 \pi)^2)} + g (u, z) . \]
  By Cauchy's inequality, for any $\lambda < 1.9$, the Taylor coefficients~$g_k$
  of~$g (u, \cdot)$ satisfy
  \[ \abs{g_k} \leqslant \frac{A_{\lambda} (u)}{(2 \pi \lambda)^{2 k}}, \qquad
     A_{\lambda} (u) \assign \max_{\tmscript{\begin{array}{c}
       \abs{w} = \lambda \pi
     \end{array}}} \abs{g^{\ast} (u, w)}, \]
  and therefore
  \begin{equation}
    \hat{\delta} (z) \ll \frac{2 \abs{R (u) - 1}}{1 - z / (2 \alpha)^2} +
    \frac{2 \varphi (u)  (2 + \varphi (u)) z / (2 \alpha)^2}{(1 - z / (2
    \alpha)^2)^2} + \frac{A_{\lambda} (u)}{1 - z / (2 \pi \lambda)^2},
    \label{eq:bernoulli maj delta hat}
  \end{equation}
  where we have bounded $\nu z / (1 - \nu z)$ where $\nu = 1 / (2 \pi)^2$ by
  the same expression with $\nu = 1 / (2 \alpha)^2$.
  
  For $0 \leqslant u \leqslant 2^{- 16}$ and using the enclosure of~$\varphi
  (u)$ from Lemma~\ref{lem:bernoulli dom root}, a brute force evaluation using
  interval arithmetic yields the bounds
  \[ \abs{R (u) - 1} \leqslant (\max_{0 \leqslant v \leqslant u} \abs{R' (v)}) u
     \leqslant 72 u, \qquad \frac{2 \varphi (u)  (2 + \varphi (u))}{(2 \alpha)^2}
     \leqslant 2.2 u. \]
  SageMath code for computing these estimates can be found in the
  supplementary material. By the same method, choosing $\lambda = \sqrt{3 /
  2}$ and making use of the fact that $g^{\ast} (0, w) = 0$, we get
  \[ A_{\lambda} (u) \leqslant u \max_{\tmscript{\begin{array}{c}
       0 \leqslant v \leqslant u\\
       \abs{w} = \lambda \pi
     \end{array}}} \left| \frac{\partial g^{\ast}}{\partial u} (v, w) \right|
     \leqslant 747 u. \]
  We substitute these bounds in~\eqref{eq:bernoulli maj delta hat} to conclude
  that
  \begin{align*}
    \hat{\delta}_k & \leqslant (2 \times 72 (2 \alpha)^{- 2 k} + 2.2 k (2
    \alpha)^{- 2 k} + 747 (2 \pi \lambda)^{- 2 k}) u\\
    & \leqslant \frac{1}{(2 \pi)^{2 k}}  ((2.2 k + 2 \times 72)  (1 + 2
    (\cosh \pi - 1) u)^{2 k} + 747 (2 / 3)^k) u\\
    & \leqslant \frac{(1 + 21.2 u)^{2 k}}{(2 \pi)^{2 k}}  (2.2 k + 891) u.
  \end{align*}
  The claim follows since, as noted in Lemma~\ref{lem:bernoulli abs}, $\abs{b_k}
  \geqslant 2 (2 \pi)^{- 2 k}$ for all~$k \geqslant 1$.
\end{proof}

\begin{corollary} \label{cor:bernoulli main}
  For all $u$ and $k$ satisfying $0 < u \leqslant 2^{- 16}$ and $43 ku
  \leqslant 1$, one has $\tilde{b}_k = b_k  (1 + \eta_k)$ with $\abs{\eta_k}
  \leqslant (3 k + 1213) u$.
\end{corollary}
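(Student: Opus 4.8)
This corollary is a cosmetic simplification of Proposition~\ref{prop:bernoulli main}: all the analytic content is already contained there, and what is left is to trade the exponential prefactor $(1 + 21.2 u)^k$ for a plain numerical constant by exploiting the extra hypothesis $43 k u \leqslant 1$. The quantity $\eta_k$ is the same one as in Proposition~\ref{prop:bernoulli main}.

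The case $k = 0$ should be dealt with first: as recorded in the proof of Lemma~\ref{lem:bernoulli local}, the computation of $b_0$ introduces no rounding error, so $\eta_0 = 0$ and the asserted inequality holds trivially (the hypothesis $43 k u \leqslant 1$ being vacuous in this case).

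For $k \geqslant 1$, the plan is to start from the bound $| \eta_k | \leqslant (1 + 21.2 u)^k (1.1 k + 446) u$ furnished by Proposition~\ref{prop:bernoulli main} and invoke the elementary estimate $(1 + x)^k \leqslant e^{k x}$ for $x \geqslant 0$. The constraint $43 k u \leqslant 1$ gives $21.2 k u \leqslant 21.2 / 43 < 1/2$, whence $(1 + 21.2 u)^k \leqslant \exp(21.2 k u) \leqslant \exp(21.2/43) < 1.64$. Substituting this in,
\[ | \eta_k | \leqslant 1.64 \, (1.1 k + 446) \, u = (1.804 k + 731.44) \, u \leqslant (3 k + 1213) \, u , \]
where the last inequality amounts to $1.196 k + 481.56 \geqslant 0$, which holds for every $k \geqslant 0$.

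I do not expect any genuine obstacle here; the only point requiring care is numerical hygiene — one should check that rounding $21.2/43$ and $e^{21.2/43}$ upward to, say, $0.494$ and $1.64$ is safe, and that the final linear comparison is set up with the inequalities pointing the right way. These verifications are of the same interval-arithmetic flavour as those already carried out in the proof of Proposition~\ref{prop:bernoulli main} and could, if desired, be delegated to the accompanying SageMath worksheet.
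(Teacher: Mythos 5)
Your argument is correct and follows the same route as the paper: bound the prefactor $(1+21.2u)^{\cdot}$ via $(1+x)\leqslant e^x$ and the constraint $43ku\leqslant 1$, then absorb the resulting constant into the linear term. One small remark: the proof of Proposition~\ref{prop:bernoulli main} actually arrives at $(1+21.2u)^{2k}$ (not $(1+21.2u)^{k}$), and the paper's own one-line proof of the corollary indeed bounds $(1+21.2u)^{2k}\leqslant e$; your argument still goes through under that reading since $e\cdot 1.1<3$ and $e\cdot 446<1213$, merely with less slack than the $1.64$ you obtained.
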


\begin{proof}
  The assumption on $ku$ implies $(1 + 21.2 u)^{2 k} \leqslant e$.
\end{proof}

\begin{figure}
  \centerline{\includegraphics{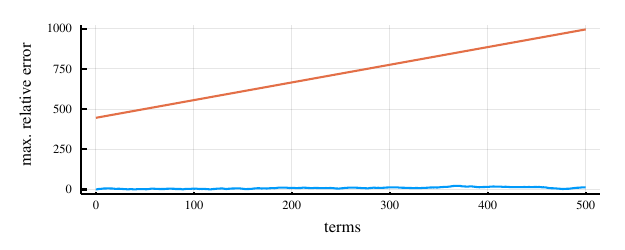}}
  \vspace*{-.5\baselineskip}
  \caption{%
    Bound from Proposition~\ref{prop:bernoulli main} (top curve) and
    measured error (bottom curve), in multiples of the unit roundoff~$u$,
    for the evaluation of the scaled Bernoulli numbers
    in 53-bit floating-point arithmetic with unbounded exponents.
    The script used to produce this plot is available in the supplementary material.
  }
  \label{fig:bernoulli}
\end{figure}

Proposition~\ref{prop:bernoulli main} and Corollary~\ref{cor:bernoulli main}
seem difficult to improve significantly, at least if we keep reasoning
analytically, without bringing into play the discrete low-level behavior of
rounding errors.
Figure~\ref{fig:bernoulli} illustrates that, as in the previous section, the bounds nevertheless overestimate the actual accumulated errors.

\begin{remark}
  \label{rk:bernoulli exp}Let us now prove our claim that no bound of the
  form~$\eta_k = \Omicron (ku)$ can hold uniformly with respect to
  $u$~and~$k$. In the notation of Lemma~\ref{lem:bernoulli local}, suppose
  that we have $s_k = u$ and $t_{k, j} = \hat{\theta}_{k - j}$ for all $k, j$.
  These values are reached, e.g., by taking $v_{k, i} = u$ and $v'_k = r_{k,
  j} = r'_k = 0$ for all~$i,j,k$. Since the $v_{k, i}$ and $v'_k$ represent
  individual rounding errors, this corresponds to a feasible situation in our
  model. Then, \eqref{eq:bernoulli err rec} translates into $\delta (z) S (z) =
  C (z) u - (\hat{\theta} \odot S) (z)  (b (z) + \delta (z))$, that is,
  \[ \delta (z) = \frac{C (z) u - \bigl(S ((1 + u) z) - S (z)\bigr) b (z)}{S ((1 + u)
     z)} . \]
  For small $u$, the denominator vanishes at $z = \beta \assign - 4 \pi^2 / (1
  + u)$, while the numerator is analytic for $\abs{w} < \pi$ and does not vanish
  at~$\beta$ (it tends to $-1$ as $u \rightarrow 0$, since $S(\beta) \sim u/2$ and $\delta(\beta) \sim -2 u^{-1}$). Thus,
  the radius of convergence of~$\delta (z)$ is at most $4 \pi^2 / (1 + u)$.
  This implies that $\eta_k$ grows exponentially for fixed~$u$.
\end{remark}

\section{Two Variables: The Equation of a Vibrating String}\label{sec:wave}

As part of an interesting case study of formal program verification
in scientific computing, Boldo {\tmem{et
al.}}~{\cite{Boldo2009,BoldoClementFilliatreMayeroMelquiondWeis2013,BoldoClementFilliatreMayeroMelquiondWeis2014}}
give a full worst-case rounding error analysis of a simple explicit finite
difference scheme for the one-dimensional wave equation
\begin{equation}
  \frac{\partial^2 p}{\partial t^2} - c^2  \frac{\partial^2 p}{\partial x^2} =
  0. \label{eq:wave}
\end{equation}
Such a finite difference scheme is nothing but a multi-dimensional linear
recurrence---in the present case, a two-dimensional one, with a time index
ranging over the natural numbers while the space index is restricted to a
finite domain. We rephrase the relatively subtle error analysis using a slight
extension of the language introduced in the previous sections. Doing so does
not change the essence of the argument, but possibly makes it more palatable.

We use the notation and assumptions of
{\cite[Section~3.2]{BoldoClementFilliatreMayeroMelquiondWeis2013}},
{\cite[Section~5]{BoldoClementFilliatreMayeroMelquiondWeis2014}}, except that
we flip the sign of $\delta_i^k$ to keep with our usual conventions. Time and
space are discretized into a grid with time step $\Delta t$ and space step
$\Delta x$. To the continuous solution~$p (x, t)$ corresponds a double
sequence $(p_i^k)$ where $k \geqslant 0$ is the space index and $0 \leqslant i
\leqslant n$ is the time index. Taking central differences for the derivatives
in~\eqref{eq:wave} and letting $a = (c \Delta t / \Delta x)^2$ leads to
\begin{equation}
  \left\{\begin{aligned}
    p_i^1 &= p_i^0 + \dfrac{a}{2}  (p_{i + 1} - 2 p_i + p_{i - 1}), & \\
    p_i^{k + 1} &= 2 p_i^k - p_i^{k - 1} + a (p_{i + 1}^k - 2 p_i^k + p_{i -
    1}^k), && k \geqslant 1.
  \end{aligned}\right. \label{eq:wave exact scheme}
\end{equation}
The problem is subject to the boundary conditions $p_0^k = p_n^k = 0$, $k \in
\mathbb{N}$, and we are given initial data~$(p_i^0)_{i = 1}^{n - 1}$. In
accordance with the Courant-Friedrichs-Lewy condition, we assume $0 < a
\leqslant 1$.

Boldo {\tmem{et al.}} study an implementation of~\eqref{eq:wave exact scheme} in
double-precision floating-point arithmetic, but their focus is on the propagation of absolute
errors. Their local error analysis shows that the computed
values~$(\tilde{p}_i^k)$ corresponding to~$(p_i^k)$ satisfy\footnote{The
correcting term involving $\delta_i^0$ in the expression of $\tilde{p}_i^1$
will help make the expression of the overall error more uniform.}
\begin{equation}
  \left\{\begin{aligned}
    \tilde{p}_i^0 &= p_i^0 + \delta_i^0\\
    \tilde{p}_i^1 &= \tilde{p}_i^0 + \dfrac{a}{2}  (\tilde{p}_{i + 1}^0 - 2
    \tilde{p}_i^0 + \tilde{p}_{i - 1}^0) + \delta_i^1 - \left( \delta_i^0 +
    \dfrac{a}{2}  (\delta_{i + 1}^0 - 2 \delta_i^0 + \delta_{i - 1}^0)
    \right),\\
    \tilde{p}_i^{k + 1} &= 2 \tilde{p}_i^k - \tilde{p}_i^{k - 1} + a
    (\tilde{p}_{i + 1}^k - 2 \tilde{p}_i^k + \tilde{p}_{i - 1}^k) +
    \delta_i^{k + 1}, \qquad k \geqslant 1
  \end{aligned}\right. \label{eq:wave approx scheme}
\end{equation}
with $\abs{\delta_i^k} \leqslant \bar{\delta} \assign 78 \cdot 2^{- 52}$ for all
$i$~and all~$k \geqslant 1$. While the discussion of error propagation
in~{\cite{BoldoClementFilliatreMayeroMelquiondWeis2013,BoldoClementFilliatreMayeroMelquiondWeis2014}}
assumes~$\delta_i^0 = 0$, the machine-checked proof actually allows for
initial errors $\abs{\delta_i^0} \leqslant \bar{\delta}^0 \assign 14 \cdot 2^{-
52}$ and gives the additional bound $\abs{\delta_i^1} \leqslant \bar{\delta}^1
\assign 81 \cdot 2^{- 53}$.
Both in~{\cite{BoldoClementFilliatreMayeroMelquiondWeis2013,BoldoClementFilliatreMayeroMelquiondWeis2014}}
and here, the computation of these local bounds is the only step of the analysis
that depends on the assumption that the computation is run in double precision.

From there, they prove~\cite[Theorem~5.2]{BoldoClementFilliatreMayeroMelquiondWeis2014}
that one has
$\abs{\tilde p_i^k - p_i^k} \leqslant \frac{1}{2}  \bar{\delta}  (k + 1)  (k + 2)$
for all $i$ and $k$.
(As usual, a naive analysis would lead to an exponential bound.)
The main aim of this section is to give a new proof of this result.

The iterations~\eqref{eq:wave exact scheme} and \eqref{eq:wave approx scheme} are,
a priori, valid for $0 < i < n$ only, but it is not hard to see that they can
be made to hold for all~$i \in \mathbb{Z}$ by extending the
sequences~$(p_i^k)$, $(\tilde{p}_i^k)$, and $(\delta_i^k)$ by odd symmetry and
$(2 n)$-periodicity with respect to~$i$. Viewing time as the main variable, we
encode space-periodic sequences of period $2 n$ by generating series of the
form
\begin{equation}\label{eq:bivgen}
f (x, t) = \sum_{k = 0}^{\infty} \sum_{i = - n}^{n - 1} f_i^k x^i t^k \in
   \Omega [[t]]
\end{equation}
where $\Omega =\mathbb{R} [x] / \langle x^{2 n} - 1 \rangle$ is the ring of
polynomials modulo $x^{2n} - 1$.
When $f$ is an element of $\Omega [[t]]$, we denote
\[ f^{u :} (x, t) = \sum_{k \geqslant u} \sum_i f_i^k x^i t^k, \qquad f_i (t)
   = \sum_k f_i^k t^k, \qquad f^k (x) = \sum_i f_i^k x^i . \]
Multiplication by $x$~and~$t$ in $\Omega [[t]]$ respectively corresponds to
backward shifts of the indices $i$~and~$k$.

Let $\Delta (x, t) = \tilde{p} (x, t) - p (x, t)$
(where $\Delta$, $\tilde p$, and~$p$ all are special cases of~\eqref{eq:bivgen})
be the generating series of
the global error. Our goal is to obtain bounds on the $\Delta_i^k$. We start
by expressing $\Delta (x, t)$ in terms of $\delta (x, t)$. This is effectively
a more precise version
of~{\cite[Theorem~5.1]{BoldoClementFilliatreMayeroMelquiondWeis2014}} covering
initial data with numeric errors.

\begin{proposition}
  \label{prop:wave expr}One has
  \begin{equation}
    \Delta (x, t) = \lambda (x, t) \eta (x, t) \label{eq:wave gen series}
  \end{equation}
  where
  \begin{equation}
    \lambda (x, t) = \frac{1}{1 - \varphi (x) t + t^2}, \qquad \eta (x, t) =
    \delta (x, t) - \varphi (x) t \delta_0 (x) \label{eq:wave lambda eta}
  \end{equation}
  with
  \[ \varphi (x) = 2 + a (x^{- 1} - 2 + x) . \]
\end{proposition}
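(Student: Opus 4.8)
The plan is to derive a recurrence for the error slices $\Delta^k(x) = \sum_i \Delta_i^k x^i$ by subtracting the exact scheme from the computed one, to translate that recurrence into an identity in $\Omega[[t]]$ via the dictionary ``multiplication by $x^{\pm 1}$ or by $t$ $=$ shift of the corresponding index'' (in the spirit of Lemma~\ref{lem:recdeq}), and then to invert a series whose constant coefficient is a unit of~$\Omega$. Throughout I write $\delta^0(x) = \sum_i \delta_i^0 x^i$ for the initial-data slice, i.e.\ the series denoted $\delta_0(x)$ in~\eqref{eq:wave lambda eta}.

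First I would subtract \eqref{eq:wave exact scheme} from \eqref{eq:wave approx scheme} line by line. The first line gives $\Delta_i^0 = \delta_i^0$. In the second line, subtracting $p_i^1 = p_i^0 + \tfrac{a}{2}(p_{i+1}^0 - 2 p_i^0 + p_{i-1}^0)$ and then using $\Delta_i^0 = \delta_i^0$, the correcting term $-\bigl(\delta_i^0 + \tfrac{a}{2}(\delta_{i+1}^0 - 2\delta_i^0 + \delta_{i-1}^0)\bigr)$ exactly cancels everything except $\delta_i^1$, so that $\Delta_i^1 = \delta_i^1$; this is precisely what that term is there for. For $k \geqslant 1$, the third line gives
\[ \Delta_i^{k+1} = 2\Delta_i^k - \Delta_i^{k-1} + a\bigl(\Delta_{i+1}^k - 2\Delta_i^k + \Delta_{i-1}^k\bigr) + \delta_i^{k+1}. \]

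Next I would pass to generating series. With $\varphi(x) = 2 + a(x^{-1} - 2 + x)$ (legitimate since $x$ is invertible in $\Omega$, with $x^{-1} = x^{2n-1}$), the combination $2\Delta_i^k + a(\Delta_{i+1}^k - 2\Delta_i^k + \Delta_{i-1}^k)$ is exactly the coefficient of $x^i$ in $\varphi(x)\Delta^k(x)$. Multiplying the displayed recurrence by $t^{k+1}$ and summing over $k \geqslant 1$, the left-hand side becomes $\Delta(x,t) - \Delta^0(x) - \Delta^1(x)t$, while the three terms on the right become $\varphi(x)t\bigl(\Delta(x,t) - \Delta^0(x)\bigr)$, $-t^2\Delta(x,t)$, and $\delta(x,t) - \delta^0(x) - \delta^1(x)t$. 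Substituting the identities $\Delta^0 = \delta^0$ and $\Delta^1 = \delta^1$ found above makes the order-$0$ and order-$1$ terms in $t$ cancel, leaving $\bigl(1 - \varphi(x)t + t^2\bigr)\Delta(x,t) = \delta(x,t) - \varphi(x)t\,\delta^0(x) = \eta(x,t)$. Since $1 - \varphi(x)t + t^2$ has constant term $1 \in \Omega$, it is a unit of $\Omega[[t]]$ with inverse $\lambda(x,t)$, and \eqref{eq:wave gen series} follows.

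Most of this is mechanical. The two steps needing care are: (i) checking, as was done in the text preceding the statement, that the odd, $2n$-periodic extension in $i$ makes \eqref{eq:wave exact scheme}--\eqref{eq:wave approx scheme} hold for every $i \in \mathbb{Z}$, which is what licenses the computation in $\Omega[[t]]$; and (ii) handling the exceptional first time step, where one must keep the $k = 0$ and $k = 1$ error coefficients out of the generic recurrence and verify that the correcting term really does force $\Delta_i^1 = \delta_i^1$, so that the low-order terms cancel exactly rather than leaving a stray polynomial in~$t$. I expect (ii) to be the one genuinely delicate point; everything else is routine index bookkeeping.
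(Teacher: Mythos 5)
Your proposal is correct and follows essentially the same route as the paper's proof: subtract the schemes to get $\Delta_i^0 = \delta_i^0$, $\Delta_i^1 = \delta_i^1$ (using the correcting term) together with the generic recurrence, translate to a relation in $\Omega[[t]]$, simplify using the two initial slices, and invert $1 - \varphi(x)t + t^2$ as a unit of $\Omega[[t]]$. The only cosmetic difference is that you sum the recurrence against $t^{k+1}$ directly, whereas the paper works with the truncation operators $\Delta^{1:}$, $\Delta^{2:}$; the calculation is the same.
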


\begin{proof}
  By comparing \eqref{eq:wave approx scheme} with \eqref{eq:wave exact scheme} and
  observing how~$\Delta_i^1$ simplifies thanks to the correcting term
  in~$\tilde{p}_i^1$, we get
  \begin{equation}
    \left\{\begin{aligned}
      \Delta_i^k &= \delta_i^k, && k = 0, 1,\\
      \Delta_i^{k + 1} &= 2 \Delta_i^k - \Delta_i^{k - 1} + a (\Delta_{i + 1}^k
      - 2 \Delta_i^k + \Delta_{i - 1}^k) + \delta_i^{k + 1}, && k + 1 \geqslant
      2.
    \end{aligned}\right. \label{eq:num scm}
  \end{equation}
  In terms of series, \eqref{eq:num scm} translates into
  \begin{gather}
    \Delta^0 (x) = \delta^0 (x), \qquad \Delta^1 (x) = \delta^1 (x),
    \label{eq:wave series init} \\
    t^{- 1} \Delta^{2 :} (x, t) = 2 \Delta^{1 :} (x, t) - t \Delta (x, t) + a
    (x^{- 1} - 2 + x) \Delta^{1 :} (x, t) + t^{- 1} \delta^{2 :} (x, t) .
    \label{eq:wave series main}
  \end{gather}
  Taking into account~\eqref{eq:wave series init}, equation~\eqref{eq:wave series
  main} becomes
  \[ \Delta (x, t) = 2 t (\Delta (x, t) - \delta_0 (x)) - t^2 \Delta (x, t) +
     a (x^{- 1} - 2 + x) t (\Delta (x, t) - \delta^0 (x)) + \delta (x, t), \]
  that is,
  \[ (1 - (2 + a (x^{- 1} - 2 + x)) t + t^2) \Delta (x, t) = \delta (x, t) -
     \varphi (x) t \delta_0 (x) \]
  The coefficient on the left-hand side is invertible in~$\Omega [[t]]$,
  leading to the equality $\Delta (x, t) = {\lambda (x, t)} \eta (x, t)$.
\end{proof}

Rather than by bivariate majorant series, we will control elements of $\Omega
[[t]]$ by majorant series in a single variable relative to a norm on~$\Omega$.
We say that $\hat{f} \in \mathbb{R}_{\geqslant 0} [[t]]$ is a majorant series
of $f \in A [[t]]$ with respect to a norm $\|{\cdot}\|_s$ on an algebra~$A$,
and write $f \ll_s \hat{f}$, when $\| f_k \|_s \leqslant \hat{f}_k$ for all $k
\in \mathbb{N}$. The basic properties listed in Section~\ref{sec:maj} extend
in the obvious way. In particular, if $\|{\cdot}\|_q, \|{\cdot}\|_r, \| {\cdot}
\|_s$ are norms such that $\| uv \|_q \leqslant \| u \|_r  \| v \|_s$, then $f
\ll_r \hat{f}$ and $g \ll_s \hat{g}$ imply $fg \ll_q \hat{f}  \hat{g}$.

For $u = \sum_i u_i x^i \in \Omega$, we define
\[ \| u \|_1 = \sum_{i = - n}^{n - 1} \abs{u_i}, \qquad \| u \|_2 = \left(
   \sum_{i = - n}^{n - 1} \abs{u_i}^2 \right)^{1 / 2}, \qquad \| u \|_{\infty} =
   \max_{i = - n}^{n - 1}  \abs{u_i} . \]
Note the inequality $\| uv \|_{\infty} \leqslant \| u \|_1  \| v \|_{\infty}$
for $u, v \in \Omega$ (an instance of Young's convolution inequality).

The local error analysis yields
\[ \delta (x, t) \ll_{\infty} \bar{\delta}^0 + \bar{\delta}^1 t +
   \frac{\bar{\delta} t^2}{1 - t} . \]
Using the notation of Proposition~\ref{prop:wave expr}, one has $\| \varphi
(x) \|_1 \leqslant 2$, hence $\| \varphi (x) \delta^0 (x) \|_{\infty}
\leqslant 2 \bar{\delta}^0$ and
\begin{equation}
  \eta (x, t) \ll_{\infty} \bar{\delta}^0 + (\bar{\delta}^1 + 2
  \bar{\delta}_0) t + \frac{\bar{\delta} t^2}{1 - t} \ll \frac{\bar{\delta}}{1
  - t} . \label{eq:wave bound eta}
\end{equation}
We first deduce a bound on the global error in quadratic mean with respect to
space. We will later get a second proof of this result as a corollary of
Proposition~\ref{prop:wave uniform norm}; the main interest of the present one
is that it does not rely on Lemma~\ref{lem:wave positivity}.

\begin{proposition}
  \label{prop:wave two norm}One has
  \[ \Delta (x, t) \ll_2 \frac{\sqrt{2 n}}{(1 - t)^3}  \bar{\delta} . \]
  In other words, the root mean square error at time~$k$ satisfies
  \[ \left( \frac{1}{n}  \sum_{i = 0}^{n - 1} (\Delta_i^k)^2 \right)^{1 / 2}
     \leqslant \frac{(k + 1)  (k + 2)}{2}  \bar{\delta} . \]
\end{proposition}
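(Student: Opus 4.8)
The plan is to feed the expression $\Delta(x,t) = \lambda(x,t)\,\eta(x,t)$ from Proposition~\ref{prop:wave expr} into the $\ll_2$ machinery. The right-hand side is a product, so the strategy is to majorize each factor separately with respect to appropriate norms and combine them via the submultiplicativity property $\|uv\|_q \leqslant \|u\|_r\|v\|_s$. For $\eta$ we already have the bound~\eqref{eq:wave bound eta}, namely $\eta(x,t) \ll_\infty \bar\delta/(1-t)$. For $\lambda(x,t) = (1 - \varphi(x)t + t^2)^{-1}$ I would expand it as a geometric-type series in $t$ and bound the $\Omega$-norm of each coefficient.

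First I would pick the norm pairing: since $\|uv\|_2 \leqslant \|u\|_1\|v\|_\infty$ (Young's inequality, as for the $\infty$ case noted in the text), it suffices to show $\lambda(x,t) \ll_1 \sqrt{2n}/(1-t)^2$ and then multiply by $\eta(x,t)\ll_\infty \bar\delta/(1-t)$ to land on $\sqrt{2n}\,\bar\delta/(1-t)^3$. So the heart of the matter is the bound $\|\lambda^k(x)\|_1 \leqslant \sqrt{2n}\,(k+1)$, where $\lambda^k(x)$ is the coefficient of $t^k$ in $\lambda(x,t)$.

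The key observation is that $1 - \varphi(x)t + t^2 = (1 - \mu(x)t)(1 - \bar\mu(x)^{-1}... )$ — more usefully, for each fixed root of unity $x = \zeta$ with $\zeta^{2n}=1$, one has $\varphi(\zeta) = 2 - 2a(1-\cos\theta) \in [2-4a, 2] \subseteq [-2,2]$ because $0 < a \leqslant 1$ (this is exactly the CFL condition doing its work). Hence $1 - \varphi(\zeta)t + t^2$ has its two reciprocal roots on the unit circle, and the coefficient of $t^k$ in $(1-\varphi(\zeta)t+t^2)^{-1}$ is $U_k(\varphi(\zeta)/2)$, a Chebyshev polynomial of the second kind evaluated in $[-1,1]$, which is bounded in absolute value by $k+1$. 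Evaluating $\lambda^k(x)$ at the $2n$-th roots of unity thus gives $|\lambda^k(\zeta)| \leqslant k+1$ for every $\zeta$; then a discrete Parseval/Plancherel identity on $\Omega$ converts this uniform bound on the "Fourier side" into $\|\lambda^k\|_2 \leqslant k+1$ and hence $\|\lambda^k\|_1 \leqslant \sqrt{2n}\,\|\lambda^k\|_2 \leqslant \sqrt{2n}\,(k+1)$ by Cauchy--Schwarz. Summing, $\lambda(x,t) \ll_1 \sqrt{2n}\sum_k (k+1)t^k = \sqrt{2n}/(1-t)^2$.

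Combining: $\Delta = \lambda\cdot\eta \ll_2 \bigl(\sqrt{2n}/(1-t)^2\bigr)\cdot\bigl(\bar\delta/(1-t)\bigr) = \sqrt{2n}\,\bar\delta/(1-t)^3$, which is the claimed majorant. Extracting the coefficient of $t^k$ gives $\|\Delta^k\|_2 \leqslant \binom{k+2}{2}\bar\delta$, i.e. $\bigl(\sum_{i=-n}^{n-1}(\Delta_i^k)^2\bigr)^{1/2} \leqslant \tfrac{(k+1)(k+2)}{2}\bar\delta$; restricting the sum to $i = 0,\dots,n-1$ and using the odd symmetry (which makes the sum over a full period equal to $2\sum_{i=0}^{n-1}$, up to the boundary zeros) turns the left-hand side into $\sqrt{2}\,\bigl(\sum_{i=0}^{n-1}(\Delta_i^k)^2\bigr)^{1/2}$, and dividing through by $\sqrt{2n}$ yields the stated root-mean-square bound. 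The main obstacle I anticipate is getting the Chebyshev bound $|U_k(\cos\theta)| \leqslant k+1$ cleanly and, more delicately, handling the boundary case $\varphi(\zeta) = \pm 2$ (i.e. $\theta = 0$ or the critical CFL value $a=1$, $\theta=\pi$), where the two roots collide and $U_k$ attains exactly $k+1$ at the endpoints — this is a limiting case rather than a genuine difficulty, but it is where the constant is tight and must be checked. The passage from the norm-on-roots-of-unity estimate to $\|\cdot\|_2$ on $\Omega$ also needs the right normalization of the discrete Fourier transform, which is routine but must be stated carefully.
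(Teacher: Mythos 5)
Your overall strategy (express $\Delta = \lambda\eta$, control $\lambda$ via evaluation at the $2n$-th roots of unity, use the Chebyshev bound $|U_k(\cos\psi)|\leqslant k+1$ there, and convert back with Parseval) is the same as the paper's, and the Chebyshev observation and the odd-symmetry step at the end are both correct. However, the norm pairing you use to put the pieces together is wrong, and the gap loses a factor of $\sqrt{2n}$.

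You claim $\|uv\|_2 \leqslant \|u\|_1\|v\|_\infty$ for $u,v\in\Omega$ and call it Young's inequality. It is not: Young's inequality for the cyclic convolution says $\|uv\|_r \leqslant \|u\|_p\|v\|_q$ with $1/p+1/q = 1+1/r$, so the only pairings landing in $\|\cdot\|_2$ are of the form $(1,2)$, $(2,1)$, $(4/3,4)$, etc.; $(1,\infty)$ gives $\|\cdot\|_\infty$, not $\|\cdot\|_2$. In fact $\|uv\|_2 \leqslant \|u\|_1\|v\|_\infty$ is simply false: take $u=v=1+x+\cdots+x^{2n-1}$, so $\|u\|_1 = 2n$, $\|v\|_\infty = 1$, while every coefficient of $uv$ equals $2n$ and $\|uv\|_2 = 2n\sqrt{2n}$. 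If you repair this with the valid $\|uv\|_2\leqslant\|u\|_1\|v\|_2$ and $\|v\|_2\leqslant\sqrt{2n}\|v\|_\infty$, your chain $\|\lambda^j\|_1\leqslant\sqrt{2n}(j+1)$, $\|\eta^{k-j}\|_\infty\leqslant\bar\delta$ only yields $\|\Delta^k\|_2\leqslant 2n\,\bar\delta\binom{k+2}{2}$, one factor of $\sqrt{2n}$ too large.

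The fix is to stop short of pulling $\lambda$ back to an $\Omega$-side $\|\cdot\|_1$ bound, and instead multiply on the Fourier side, where the product is \emph{entrywise} rather than a convolution: with $u^\ast = (u(\omega))_{\omega^{2n}=1}$ one has the elementary inequality $\|u^\ast v^\ast\|_2 \leqslant \|u^\ast\|_\infty\|v^\ast\|_2$. Combine your uniform bound $\|(\lambda^j)^\ast\|_\infty\leqslant j+1$ (the Chebyshev estimate, which you proved correctly) with $\|(\eta^{k-j})^\ast\|_2 = \sqrt{2n}\,\|\eta^{k-j}\|_2\leqslant 2n\,\bar\delta$, sum over $j$, and apply Parseval once more to return to $\|\Delta^k\|_2$; the $1/\sqrt{2n}$ from Parseval then exactly cancels one of the $\sqrt{2n}$'s and you recover $\|\Delta^k\|_2\leqslant \sqrt{2n}\,\bar\delta\,(k+1)(k+2)/2$. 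This is precisely the route the paper takes. The rest of your argument, including the treatment of the boundary case $|\varphi(\zeta)|=2$ (where $U_k$ attains $\pm(k+1)$ but the bound still holds) and the final symmetry reduction to $i=0,\dots,n-1$, is fine.
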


\begin{proof}
  Elements of $\Omega$ can be evaluated at $2 n$-th roots of unity, and the
  collection $u^{\ast} = (u (\omega))_{\omega^{2 n} = 1}$ of values of a
  polynomial~$u \in \Omega$ is nothing but the discrete Fourier transform of
  its coefficients. The coefficientwise Fourier transform of formal power
  series,
  \[ f (t) \mapsto f^{\ast} (t) = (f (\omega, t))_{\omega^{2 n} = 1}, \]
  is an algebra homomorphism from $\Omega [[t]]$ to $\mathbb{C}^{2 n} [[t]]$.
  One easily checks Parseval's identity for the discrete Fourier transform:
  \[ \| u \|_2 = \frac{1}{\sqrt{2 n}}  \| u^{\ast} \|_2, \qquad u \in
     \Omega, \]
  where the norm on the right-hand side is the standard Euclidean norm on
  $\mathbb{C}^{2 n}$.
  
  The uniform bound~\eqref{eq:wave bound eta} resulting from the local error
  analysis implies
  \[ \eta (x, t) \ll_2 \frac{\sqrt{2 n}}{1 - t}  \bar{\delta}, \]
  and Parseval's identity yields
  \[ \eta^{\ast} (t) \ll_2 \frac{2 n \bar{\delta}}{1 - t} . \]
  At $x = \omega = e^{\mathi \theta}$, the factor $\lambda$ in~\eqref{eq:wave
  gen series} takes the form
  \[ \lambda (\omega, t) = \frac{1}{1 - 2 b_{\omega} t + t^2}, \qquad
     b_{\omega} = 1 + a (\cos \theta - 1), \]
  where $- 1 \leqslant b_{\omega} \leqslant 1$ due to the assumption that $0 <
  a \leqslant 1$. The denominator therefore factors as $(1 - \zeta_{\omega} t)
  (1 - \bar{\zeta}_{\omega} t)$ where $\abs{\zeta_{\omega}} = 1$, so that we
  have
  \[ \lambda (\omega, t) = \frac{1}{(1 - \zeta_{\omega} t) (1 -
     \bar{\zeta}_{\omega} t)} \ll \frac{1}{(1 - t)^2} \]
  in $\mathbb{C} [[t]]$ and hence $\lambda^{\ast} (t) \ll_{\infty} (1 - t)^{-
  2}$.
  
  Since the entrywise product in~$\mathbb{C}^{2 n}$ satisfies $\| u^{\ast}
  v^{\ast} \|_2 \leqslant \| u^{\ast} \|_{\infty}  \| v^{\ast} \|_2$, the
  bounds on $\lambda^{\ast}$ and $\eta^{\ast}$ combine into
  \[ \lambda^{\ast} (t) \eta^{\ast} (t) \ll_2 \frac{2 n \bar{\delta}}{(1 -
     t)^3} . \]
  Using Parseval's identity again, we conclude that
  \[ \Delta (x, t) = \lambda (x, t) \eta (x, t) \ll_2 \frac{\sqrt{2 n} 
     \bar{\delta}}{(1 - t)^3} \]
  as claimed. The second formulation of the result comes from the symmetry of
  the data: one has $\Delta (x^{- 1}, t) = - \Delta (x, t)$, hence $\|
  \Delta^k \|_2^2 = 2 \sum_{i = 0}^{n - 1} (\Delta_i^k)^2$ for all~$k$.
\end{proof}

Proposition~\ref{prop:wave two norm} immediately implies $\Delta (x, t)
\ll_{\infty} \bar{\delta}  \sqrt{2 n}  (1 - t)^{- 3}$. However, this
estimate turns out to be too pessimistic by a factor $\sqrt{2 n}$. The key
to better bounds is the following lemma, proved (with the help of generating
series!) in Appendix~C of
{\cite{BoldoClementFilliatreMayeroMelquiondWeis2014}}. The argument, due to
M.~Kauers and V.~Pillwein, reduces the problem to an inequality of Askey and
Gasper via an explicit expression in terms of Jacobi polynomials that is
proved using Zeilberger's algorithm. It remains intriguing to find a more
direct way to derive the uniform bound on~$\Delta$.

\begin{lemma}
  \label{lem:wave positivity}The coefficients $\lambda_i^k$ of $\lambda (x,
  t)$ are nonnegative.
\end{lemma}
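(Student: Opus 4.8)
The plan is to identify $\lambda_i^k$ with a partial sum of Jacobi polynomials evaluated on $[-1,1]$ and to invoke the classical positivity inequality of Askey and Gasper. Since $\lambda(x,t)=\bigl(1-\varphi(x)\,t+t^2\bigr)^{-1}$ and $\bigl(1-2y t+t^2\bigr)^{-1}=\sum_{k\geqslant 0}U_k(y)\,t^k$ with $U_k$ the Chebyshev polynomials of the second kind, the coefficient $\lambda^k(x)\assign [t^k]\,\lambda(x,t)$ equals $U_k(\varphi(x)/2)$, where $\varphi(x)/2=(1-a)+\tfrac a2(x+x^{-1})$. With $x=e^{\mathi\theta}$ this is $U_k\bigl(1-2a\sin^2(\theta/2)\bigr)$, an even trigonometric polynomial of degree~$k$; the symmetry $x\leftrightarrow x^{-1}$ reduces the claim to $\lambda_i^k\geqslant 0$ for $i\geqslant 0$, the case $a=0$ being trivial.

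The crucial step is to make the dependence on~$a$ explicit. From $2-\varphi(x)=a\,(2-x-x^{-1})=-a\,x^{-1}(1-x)^2$ we get $1-\varphi(x)\,t+t^2=(1-t)^2-a\,t\,x^{-1}(1-x)^2$, and expanding the geometric series yields
\[
  \lambda^k(x)=\sum_{m=0}^{k}a^m\binom{k+m+1}{2m+1}\,x^{-m}(1-x)^{2m},
\]
hence, after extracting $[x^i]$, an explicit single binomial sum for $\lambda_i^k$, polynomial of degree~$k$ in~$a$. Passing to generating series in~$t$, and combining the closed form $\sum_{m\geqslant 0}\binom{2m+2i}{m}z^m=(1-4z)^{-1/2}\bigl(\tfrac{1-\sqrt{1-4z}}{2z}\bigr)^{2i}$ with the Jacobi generating function $\sum_{n\geqslant 0}P_n^{(\alpha,0)}(y)\,t^n=2^\alpha R^{-1}(1-t+R)^{-\alpha}$, $R=\sqrt{1-2yt+t^2}$, one obtains after routine simplification the identity
\[
  \lambda_i^k=a^i\sum_{n=0}^{k-i}P_n^{(2i,\,0)}(1-2a),\qquad i\geqslant 0.
\]
For $i=0$ this reads $\lambda_0^k=\sum_{n=0}^{k}P_n(1-2a)$ in terms of Legendre polynomials, and it can be checked on small cases (e.g.\ $\lambda_0^2=1+(1-2a)+(1-6a+6a^2)=3-8a+6a^2$). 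Alternatively the identity can be certified mechanically by creative telescoping (Zeilberger's algorithm) applied to the binomial sum; this is essentially the route of Kauers and Pillwein.

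The conclusion is then immediate. For $a\in[0,1]$ one has $1-2a\in[-1,1]$ and $2i\geqslant 0$, so the Askey--Gasper inequality---which asserts that $\sum_{n=0}^{N}P_n^{(\alpha,0)}(y)\geqslant 0$ for all $N$, all $\alpha\geqslant 0$, and all $y\in[-1,1]$, and which specializes for $\alpha=0$ to Fej\'er's classical positivity theorem for Legendre polynomials---gives $\sum_{n=0}^{k-i}P_n^{(2i,0)}(1-2a)\geqslant 0$. Since $a^i\geqslant 0$, we get $\lambda_i^k\geqslant 0$, and the case $i<0$ follows by the $i\leftrightarrow -i$ symmetry. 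The main obstacle is the Askey--Gasper inequality itself: the generating-function manipulations producing the displayed closed form are routine (and, as the paper suggests in pointing to~{\cite[Appendix~C]{BoldoClementFilliatreMayeroMelquiondWeis2014}}, can be offloaded to a computer algebra system), but the nonnegativity of partial sums of Jacobi polynomials is a deep classical result that I would quote rather than reprove. A secondary point of care is the sign- and square-root bookkeeping in reducing the two-variable coefficient extraction to the single sum, which is where Zeilberger's algorithm earns its keep.
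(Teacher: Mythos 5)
Your proposal is correct and reconstructs precisely the Kauers--Pillwein argument (explicit binomial-sum coefficients, reduction to partial sums of Jacobi polynomials $P_n^{(2i,0)}(1-2a)$, Askey--Gasper positivity, identity certifiable by Zeilberger's algorithm) that the paper cites from Appendix~C of Boldo \emph{et al.}~rather than reproving. Note that, like the cited source, you actually establish the slightly stronger statement about the lift of $\lambda(x,t)$ to $\mathbb{R}[x,x^{-1}][[t]]$, which the paper's remark after the lemma points out implies the version for $\Omega[[t]]$.
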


Strictly speaking, the nonnegativity result
in~{\cite{BoldoClementFilliatreMayeroMelquiondWeis2014}} is about the
coefficients, not of $\lambda (x, t) \in \Omega [[t]]$ as defined above, but
of its lift to $\mathbb{R} [x, x^{- 1}] [[t]]$ obtained by
interpreting~\eqref{eq:wave lambda eta} in the latter ring. It is thus slightly
stronger than the above lemma.

From this lemma it is easy to deduce a more satisfactory bound on the global
error, matching that
of~{\cite[Theorem~5.2]{BoldoClementFilliatreMayeroMelquiondWeis2014}}.

\begin{proposition}
  \label{prop:wave uniform norm}One has the bound
  \[ \Delta (x, t) \ll_{\infty} \frac{\bar{\delta}}{(1 - t)^3}, \]
  that is,
  \[ \abs{\Delta_i^k} \leqslant \frac{1}{2}  \bar{\delta}  (k + 1)  (k + 2)  \]
  for all $i$ and $k$.
\end{proposition}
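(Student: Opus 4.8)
The plan is to mimic the proof of Proposition~\ref{prop:wave two norm}, but to work directly in $\Omega[[t]]$ with the $\|\cdot\|_\infty$ norm instead of passing through the discrete Fourier transform, using Lemma~\ref{lem:wave positivity} to control the factor $\lambda(x,t)$. The crucial observation is that since the coefficients $\lambda_i^k$ are nonnegative, the minimal majorant series $\minmaj{\lambda^k}$ (with respect to $\|\cdot\|_1$) is obtained simply by evaluating at $x=1$: one has $\|\lambda^k\|_1 = \sum_i \lambda_i^k = \lambda^k(1)$. Thus $\lambda(x,t) \ll_1 \lambda(1,t)$, and since $\varphi(1) = 2$, the series $\lambda(1,t) = (1 - 2t + t^2)^{-1} = (1-t)^{-2}$. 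So $\lambda(x,t) \ll_1 (1-t)^{-2}$.

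Next I would combine this with the bound on $\eta$ already established in~\eqref{eq:wave bound eta}, namely $\eta(x,t) \ll_\infty \bar\delta/(1-t)$. Using the submultiplicativity $\|uv\|_\infty \leqslant \|u\|_1 \|v\|_\infty$ noted just before the statement of Proposition~\ref{prop:wave two norm}, this gives, in the extended majorant calculus, $\lambda(x,t)\,\eta(x,t) \ll_\infty \lambda(1,t)\cdot \tfrac{\bar\delta}{1-t}$, that is,
\[
  \Delta(x,t) = \lambda(x,t)\,\eta(x,t) \ll_\infty \frac{\bar\delta}{(1-t)^3}.
\]
Extracting the coefficient of $t^k$ and using $[t^k](1-t)^{-3} = \binom{k+2}{2} = (k+1)(k+2)/2$ yields $\|\Delta^k\|_\infty \leqslant \tfrac12\bar\delta\,(k+1)(k+2)$, whence the pointwise bound $|\Delta_i^k| \leqslant \tfrac12\bar\delta\,(k+1)(k+2)$ for all $i$ and $k$.

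The one point requiring a little care is the justification that $\lambda(x,t)\ll_1\lambda(1,t)$ really does follow from Lemma~\ref{lem:wave positivity}: the identity $\|\lambda^k\|_1 = \lambda^k(1)$ holds precisely because every coefficient $\lambda_i^k$ is nonnegative, so there is no cancellation when summing; without the lemma one would only get the much weaker (and useless) bound coming from a crude estimate of $\|\varphi(x)\|_1$. I expect this to be the main (indeed the only) obstacle, and it is exactly the reason the authors isolate Lemma~\ref{lem:wave positivity} beforehand; everything else is a routine application of the majorant-series machinery of Section~\ref{sec:maj} together with the already-proved expression~\eqref{eq:wave gen series} for $\Delta$ and the local-error bound~\eqref{eq:wave bound eta}. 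As a sanity check, this bound is indeed a factor $\sqrt{2n}$ better than what Proposition~\ref{prop:wave two norm} gives directly, consistent with the remark preceding the lemma.
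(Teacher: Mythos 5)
Your proof is correct and follows exactly the paper's argument: Lemma~\ref{lem:wave positivity} gives $\lambda(x,t)\ll_1\lambda(1,t)=(1-t)^{-2}$, and combining with~\eqref{eq:wave bound eta} via $\|uv\|_\infty\leqslant\|u\|_1\|v\|_\infty$ yields the claimed majorant. The explanation of why positivity makes $\|\lambda^k\|_1=\lambda^k(1)$ is a useful unpacking of the one-line implication in the paper's proof, but the route is the same.
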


\begin{proof}
  Lemma~\ref{lem:wave positivity} implies $\lambda (x, t) \ll_1 \lambda (1,
  t)$. This bound combines with \eqref{eq:wave bound eta} to give
  \[ \Delta (x, t) = \lambda (x, t) \eta (x, t) \ll_{\infty} \lambda (1, t) 
     \frac{\bar{\delta}}{(1 - t)} = \frac{\bar{\delta}}{(1 - t)^3}, \]
  just like in the proof of Proposition~\ref{prop:wave two norm}.
\end{proof}

\section{Solutions of Linear Differential Equations}\label{sec:dfinite}

For the last application, we return to recurrences of finite order in a single
variable. Instead of looking at a specific sequence, though, we now consider a
general class of recurrences with polynomial coefficients. It is technically
simpler and quite natural to restrict our attention to recurrences associated
to nonsingular differential equations under the correspondence from
Lemma~\ref{lem:recdeq}: thus, we consider a linear ordinary differential
equation
\begin{equation}
  p_r (z) y^{(r)} (z) + \cdots + p_1 (z) y' (z) + p_0 (z) y (z) = 0
  \label{eq:deq}
\end{equation}
where $p_0, \ldots, p_r \in \mathbb{C} [z]$ and assume that $p_r (0) \neq 0$.
We expect that this assumption could be lifted by working along the lines
of~{\cite{MezzarobbaSalvy2010}}.

It is classical that~\eqref{eq:deq} then has~$r$ linearly independent formal
power series solutions and all these series are convergent in a neighborhood
of~$0$. Suppose that we want to evaluate one of these solutions at a point
lying within its disk of convergence. A natural way to proceed is to sum the
series iteratively, using the associated recurrence to generate the
coefficients. Our goal in this section is to give an error bound on the
approximation of the partial sum computed by a version of this
algorithm.
We do not consider the {\tmem{truncation}} error here
(see however Remark~\ref{rk:truncations} below).

We formulate the computation as an algorithm based on interval arithmetic that
returns an enclosure of the partial sum. Running the whole loop in interval
arithmetic would typically lead to enclosures of width that growths
exponentially with the number of computed terms, and thus to a catastrophic
loss of accuracy. Instead, the algorithm executes the body of the loop in
interval arithmetic, which saves us from going into the details of the local
error analysis, but ``squashes'' the computed interval to its midpoint after
each loop iteration. It maintains a running bound on the discarded radii that
serves to control the overall effect of the propagation of local errors. This
way of using interval arithmetic does not create long chains of interval
operations depending on each other and only produces a small overestimation.

The procedure is presented as Algorithm~\ref{algo:dfsum}.
While, to the best of our knowledge, the algorithm is new,
the approach just sketched is a very natural one.
The main contribution of this section is the automated error analysis that makes it applicable.

In the algorithm and the analysis that follows, we use the notation of
Section~\ref{sec:genseries} for differential and recurrence operators, with
the symbol~$\partial$ denoting~$\mathd / \mathd z$. When $P = p_r \partial^r +
\cdots$ is a differential operator, we denote by
\[ \rho (P) = \min \{ \abs{\xi} \of p_r (\xi) = 0 \} \in [0, \infty] \]
the radius of the disk centered at the
origin and extending to the nearest singular point. Variable names set in bold
represent complex intervals, or {\tmem{balls}}, and operations involving them
obey the usual laws of midpoint-radius interval arithmetic (i.e.,
$\tmmathbf{x} \ast \tmmathbf{y}$ is a reasonably tight ball containing $x \ast
y$, for all $x \in \tmmathbf{x}$, $y \in \tmmathbf{y}$, and for every
arithmetic operation~$\ast$). We denote by $\tmop{mid} (\tmmathbf{x})$ the
center of a ball~$\tmmathbf{x}$ and by $\tmop{rad} (\tmmathbf{x})$ its radius.

\begin{algorithm}[p!]
\caption{}
\label{algo:dfsum}
\begin{minipage}{\textwidth}
\begin{description}
  \item[Input] An operator $P = p_r (z) \partial^r + \cdots + p_1 (z) \partial
  + p_0 (z)$, with $p_r (0) \neq 0$. A vector $(\tmmathbf{u}_n)_{n = 0}^{r -
  1}$ of ball initial values. An evaluation point $\zeta \in \mathbb{C}$ with
  $\abs{\zeta} < \rho (P)$. A truncation order~$N$.
  
  \item[Output] A complex ball containing $\sum_{n = 0}^{N - 1} u_n \zeta^n$,
  where $u (z)$ is the solution of $P \cdot u = 0$ corresponding to the given
  initial values.
\end{description}
\begin{enumeratenumeric}
  \item \label{step:recop}[Compute a recurrence relation.] Define $L \in
  \mathbb{K} [X] [Y]$ by $z^r P = L (z, z \partial)$. Compute polynomials $b_0
  (n), \ldots, b_s (n)$ such that
  \[ L (S^{- 1}, n) = b_0 (n) - b_1 (n) S^{- 1} - \cdots - b_s (n) S^{- s} .
  \]
  \item \label{step:majeq}[Compute a majorant differential equation.] Let $m =
  \max (1, \deg p_r)$. Compute a rational~$\alpha \approx \rho (P)^{- 1}$ such
  that $\rho (P)^{- 1} < \alpha < \abs{\zeta}^{- 1}$. (If $\rho (P) = \infty$,
  take, for instance, $\alpha \approx \abs{\zeta}^{- 1} / 2$.) Compute rationals
  $c \approx \abs{p_r (0)}$ and $M$ such that $0 < c \leqslant \abs{p_r (0)}$ and
  $M \geqslant \max_{i = 0}^{r - 1} \sum_{j = 0}^{\deg p_i} \abs{p_{i, j}}
  \alpha^{- i - j - 1}$.
  
  \item \label{step:maj ini}[Initial values for the bounds.] Compute positive
  lower bounds on the first~$r$ terms of the series $\hat{g} (z) = \exp \left(
  Mc^{- 1} \alpha \int_0^z (1 - \alpha z)^{- m} \right)$. (This is easily done
  using arithmetic on truncated power series with ball coefficients.) Deduce
  rationals $\hat{u}_0 \geqslant \max_{n = 0}^{r - 1} (\abs{\tmmathbf{u}_n} /
  \hat{g}_n)$ and $\hat{\delta}_0 \geqslant \max_{n = 0}^{r - 1} (\abs{\delta_n}
  / \hat{g}_n)$.
  
  \item \relax[Initialize the recurrence.] Set \[(\tilde{u}_{r - s}, \ldots, \tilde{u}_{- 1},
  \tilde{u}_0, \ldots, \tilde{u}_{r - 1}) = (0, \ldots, 0, \tmop{mid}
  (\tmmathbf{u}_0), \ldots, \tmop{mid} (\tmmathbf{u}_{r - 1})), \]
  $\tmmathbf{s}_0 = 0$, $\tmmathbf{t}_0 = 1$, $\bar{\eta} = 0$. (Although most
  variables are indexed by functions on~$n$ for ease of reference, only
  $(\tilde{u}_{n - i})_{i = 0}^s$, $\tmmathbf{s}_n$, $\tmmathbf{t}_n$, and
  $\bar{\eta}$ need to be stored from one loop iteration to the next.)
  
  \item \label{step:for}For $n = 1, \ldots, N$, do:
  \begin{enumerate}
    \item \label{step:if}If $n \geqslant r$, then:
    \begin{enumeratenumeric}
      \item \label{step:rec}[Next coefficient.] Compute
      \[ \tmmathbf{u}_n = \dfrac{1}{b_0 (n)}  (b_1 (n)  \tilde{u}_{n - 1} +
         \cdots + b_s (n)  \tilde{u}_{n - s}) \]
      in ball arithmetic.
      
      \item \relax[Round.] \label{step:round}
      Set $\tilde{u}_n = \tmop{mid} (\tmmathbf{u}_n)$.
      (If $\tmmathbf{u}_n$ contains~$0$, it can be
       better in practice to force $\tilde{u}_n$ to~$0$ even if $\tmop{mid}
       (\tmmathbf{u}_n) \neq 0$ and increase~$\tmop{rad} (\tmmathbf{u}_n)$
       accordingly.)
      
      \item \relax[Local error bound.] \label{step:local}Let $\mu = \abs{\tilde{u}_{n -
      1}} + \cdots + \abs{\tilde{u}_{n - s}}$. If $\mu \neq 0$, then
      update~$\bar{\eta}$ to $\max (\bar{\eta}, \eta_n)$ where $\eta_n =
      \tmop{rad} (\tmmathbf{u}_n) / \mu$.
    \end{enumeratenumeric}
    \item \relax[Next partial sum.] Compute $\tmmathbf{t}_n = \zeta \cdot
    \tmmathbf{t}_{n - 1}$ and $\tmmathbf{s}_n =\tmmathbf{s}_{n - 1} +
    \tilde{u}_{n - 1} \tmmathbf{t}_{n - 1}$ using ball arithmetic.
  \end{enumerate}
  \item \label{step:add-error}[Account for accumulated numerical errors.]
  Compute $\sigma \geqslant c (\abs{\zeta} + \cdots + \abs{\zeta}^s)$. If $\sigma
  \bar{\eta} \geqslant 1$, signal an error. Otherwise, compute
  \begin{equation}
    A \geqslant \frac{M \alpha \abs{\zeta}}{(1 - \alpha \abs{\zeta})^m}, \qquad
    \Delta_N \geqslant \frac{\hat{\delta}_0 + \hat{u}_0 \sigma (1 + A) 
    \bar{\eta}}{1 - \sigma \bar{\eta}} \exp \frac{A}{1 - \sigma \bar{\eta}}
    \label{eq:df final rad}
  \end{equation}
  and increase the radius of~$\tmmathbf{s}_N$ by~$\Delta_N$.
  
  \item Return $\tmmathbf{s}_N$.
\end{enumeratenumeric}
\end{minipage}
\end{algorithm}

As mentioned, the key feature of Algorithm~\ref{algo:dfsum} is that
step~\ref{step:rec} computes~$\tmmathbf{u}_n$
based only on the centers of the intervals $\tmmathbf{u}_{n - 1}, \ldots,
\tmmathbf{u}_{n - s}$, ignoring their radii. In the remainder of this section, we will prove that thanks to the
correction made at step~\ref{step:add-error}, the enclosure returned when the
computation succeeds is nevertheless correct. The algorithm may also fail at
step~\ref{step:add-error}, but that can always be avoided by increasing the
working precision (provided that interval operations on inputs of radius
tending to zero produce results of radius that tends to zero).

With the notation from the algorithm, let $u (z)$ be a power series solution
of $P \cdot u = 0$ corresponding to initial values $u_0 \in \tmmathbf{u}_0,
\ldots, u_r \in \tmmathbf{u}_r$. The Cauchy existence theorem implies that
such a solution exists and that $u (z)$ converges on the disk $\abs{z} < \rho
(P)$. We recall basic facts about the recurrence obtained at
step~\ref{step:recop}. Let $Q (X) = X (X - 1) \cdots (X - r + 1)$.

\begin{lemma}
  \label{lem:df indpol}The coefficient sequence~$(u_n)$ of $u (z)$ satisfies
  \begin{equation}
    b_0 (n) u_n = b_1 (n) u_{n - 1} + \cdots + b_s (n) u_{n - s}, \label{eq:df
    exact rec} \nopagebreak
  \end{equation}
  where one has $b_0 (n) = p_r (0) Q (n)$. In particular, $b_0$ is not the
  zero polynomial.
\end{lemma}

\begin{proof}
  Observe that for $p \in \mathbb{C} [z]$, the operator $\partial p = p
  \partial + p'$ has~$p$ as a leading coefficient when viewed as a polynomial
  in~$\partial$ with coefficients in~$\mathbb{C} [z]$ written to the left. It
  follows that $z^k \partial^k = (z \partial)^k + \left( \text{terms involving
  lower powers of~$\partial$} \right)$ and therefore that $z^r P$ can be
  written as a polynomial in $z$ and $z \partial$, as implicitly required by
  the algorithm. That the operator $L (S^{- 1}, n)$ annihilates~$(u_n)$
  follows from Lemma~\ref{lem:recdeq}.
  
  The only term of $z^r P$, viewed as a sum of monomials $p_{i, j} z^j
  \partial^i$ that can contribute to $b_0$ is $p_r (0) z^r \partial^r$, for
  all others have $i - j < 0$. The relation $z^k \partial^k = (z \partial - {k
  + 1}) z^{k - 1} \partial^{k - 1}$ then shows that $b_0 (n) = p_r (0) Q (n)$,
  where $p_0 (0) \neq 0$ by assumption.
\end{proof}

Let $\hat{a} (z) = Mc^{- 1} \alpha (1 - \alpha z)^{- m}$ where $M$, $c$, and
$\alpha$ are the quantities computed at step~\ref{step:majeq} of the
algorithm.

\begin{lemma}
  \label{lem:df maj eq}Let $y, \hat{y}$ be power series such that $P \cdot y
  \ll \partial^{r - 1}  (\partial - \hat{a}) \cdot \hat{y}$. If one has $\abs{y_n
 }
  \leqslant \hat{y}_n$ for $n < r$, then $y \ll \hat{y}$.
\end{lemma}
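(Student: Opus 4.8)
The statement is a form of Cauchy's method of majorants (Section~\ref{sec:maj}), and the plan is to run the same induction‑on‑coefficients argument as in the proof of Proposition~\ref{prop:maj deq}, after using Lemma~\ref{lem:recdeq} to turn both the hypothesis $P\cdot y\ll\partial^{r-1}(\partial-\hat a)\cdot\hat y$ and the conclusion $y\ll\hat y$ into statements about recurrences. Writing $v=P\cdot y$ and $\hat b=\partial^{r-1}(\partial-\hat a)\cdot\hat y$ (which has nonnegative coefficients, as the relation $v\ll\hat b$ presupposes), the computation underlying Lemma~\ref{lem:df indpol} gives the recurrence
\[ p_r(0)\,Q(n)\,y_n \;=\; \sum_{k=1}^{s} b_k(n)\,y_{n-k} + v_{n-r}, \qquad Q(n)=n(n-1)\cdots(n-r+1), \]
for $(y_n)$, and extracting the coefficient of $z^{n-r}$ from $\partial^{r-1}(\partial-\hat a)\cdot\hat y=\hat b$ gives a recurrence of the same shape for $(\hat y_n)$, again with leading factor $Q(n)$, which is positive for $n\geqslant r$.

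The proof then proceeds by induction on~$n$. The base case $|y_n|\leqslant\hat y_n$ for $n<r$ is the hypothesis. For the inductive step, with $|y_j|\leqslant\hat y_j$ for all $j<n$, one bounds $|y_n|$ from its recurrence using the inductive hypothesis and $|v_{n-r}|\leqslant\hat b_{n-r}$, and must check that the bound obtained is $\leqslant\hat y_n$ as computed from the model recurrence. This amounts to comparing the coefficients $b_k(n)$ with the corresponding weights of the model recurrence, and it is here that every item produced in step~\ref{step:majeq} of Algorithm~\ref{algo:dfsum} enters: the condition $\rho(P)^{-1}<\alpha$ controls the leading coefficient, since $p_r(z)/p_r(0)$ is a product of factors $1-z/\rho$ with $|\rho|\geqslant\rho(P)>\alpha^{-1}$, so that both $\minmaj{(1/p_r)}$ and $\minmaj{p_r}/|p_r(0)|$ have Taylor coefficients bounded geometrically with ratio~$\alpha$; the defining inequality $M\geqslant\max_{i<r}\sum_j|p_{i,j}|\,\alpha^{-i-j-1}$ controls the low‑order coefficients, in the normalized form $|p_{i,j}|\leqslant M\alpha^{i+j+1}$; and $c\leqslant|p_r(0)|$ together with $m=\max(1,\deg p_r)\geqslant\deg p_r$ supply the slack built into $\hat a=Mc^{-1}\alpha(1-\alpha z)^{-m}$ and the associated majorant solution $\hat g=\exp\!\bigl(\int_0^z\hat a\bigr)$ of step~\ref{step:maj ini}. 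Once the model recurrence is seen to dominate, $|y_n|\leqslant\hat y_n$ follows, hence $y\ll\hat y$.

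The delicate point, which I expect to be the main obstacle, is the handling of the \emph{non‑constant part} of the leading coefficient~$p_r$. Each monomial $p_{r,k}z^k\partial^r$ with $1\leqslant k\leqslant\deg p_r$ contributes to $b_k(n)$ a term of the same polynomial degree~$r$ in~$n$ as the leading coefficient $b_0(n)=p_r(0)\,Q(n)$, so these terms cannot be absorbed one at a time into the $\hat a$‑part of the model recurrence. One has instead to group them with the leading term — recognizing $\sum_{k=0}^{\deg p_r}p_{r,k}\,(n-k)^{\underline{r}}\,y_{n-k}$ as $[z^{n-r}]\bigl(p_r(z)\,y^{(r)}(z)\bigr)$ — and then solve for $y_n$, using $(n-k)^{\underline{r}}\leqslant n^{\underline{r}}$ for $n\geqslant r$ and the geometric bound on $\minmaj{p_r}/|p_r(0)|$ to keep the resulting recurrence under control. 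The somewhat idiosyncratic shape $\partial^{r-1}(\partial-\hat a)$ of the model operator, as opposed to a plain first‑order majorant operator, is exactly what leaves enough room in the lagged weights $(n-1)^{\underline{r-1}}$ of the model recurrence for this contribution. Granting the comparison of recurrences, the remaining bookkeeping — powers of~$\alpha$, the binomial factors in the Taylor coefficients of~$\hat a$, and the falling‑factorial weights — is routine, just as in the proof of Proposition~\ref{prop:maj deq}.
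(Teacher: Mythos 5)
Your high-level plan --- to proceed via the coefficient-wise induction of Proposition~\ref{prop:maj deq} and to use exactly the quantities $\alpha$, $c$, $M$, $m$ produced at step~\ref{step:majeq} to make the comparison close --- is in the right spirit, and you have correctly put your finger on the difficulty: in the polynomial recurrence of Lemma~\ref{lem:df indpol}, the non-constant part of $p_r$ contributes terms $p_{r,k}(n-k)^{\underline r}\,y_{n-k}$ of the same degree in~$n$ as the diagonal term $p_r(0)\,n^{\underline r}\,y_n$. But the repair you sketch does not close the gap. After solving for $y_n$ and dividing by $p_r(0)\,n^{\underline r}$, the bound $(n-k)^{\underline r}\leqslant n^{\underline r}$ leaves the weight of $\hat y_{n-k}$ coming from $p_{r,k}$ at least $|p_{r,k}|/|p_r(0)|$, a quantity of order~$1$ in~$n$; whereas $\hat P = \partial^{r-1}(\partial-\hat a)$, being monic with power-series lower coefficients, produces a recurrence for $\hat y_n$ whose off-diagonal weights, after the same division, are all $O(1/n)$. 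A term-by-term comparison therefore fails for large~$n$, and closing it would require a genuine lower bound on the ratios $\hat y_n/\hat y_{n-k}$ --- a substantial additional argument, not the ``routine bookkeeping'' you anticipate.

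The paper's proof avoids the polynomial recurrence entirely: it normalizes $P$ by dividing by the \emph{power series} $p_r(z)$, not merely by the scalar~$p_r(0)$. This yields the monic operator $\partial^r + \sum_{i<r}(p_i/p_r)\,\partial^i$ with power-series lower-order coefficients; the problematic degree-$r$-in-$n$ off-diagonal terms simply never appear, being absorbed into the expansion of~$1/p_r$. The whole content of the argument is then a short verification that $p_i(z)/p_r(z)\ll\hat f_i(z)$, where the $\hat f_i$ are the coefficients of $\hat P$ written in the same normalized form; this uses $1/p_r(z)\ll c^{-1}(1-\alpha z)^{-m}$ (which follows from $\alpha>\rho(P)^{-1}$, $c\leqslant|p_r(0)|$, $m\geqslant\deg p_r$) together with the definition of~$M$. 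With these coefficient bounds in hand, the paper does not re-run the induction: it invokes Proposition~\ref{prop:maj deq} as a black box. Your remark that one should ``recognize $\sum_k p_{r,k}(n-k)^{\underline r}y_{n-k}$ as $[z^{n-r}]\bigl(p_r(z)\,y^{(r)}(z)\bigr)$'' is the right instinct, but to be exploited it must be followed by dividing the equation through by $p_r(z)$ itself, which is precisely what removes the terms you are struggling to absorb.
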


\begin{proof}
  Let $\hat{P} = \partial^{r - 1}  (\partial - \hat{a} (z))$, that is,
  \[ \hat{P} = \partial^{r - 1}  \left( \partial - \frac{Mc^{- 1} \alpha}{(1 -
     \alpha z)^m} \right) = \partial^r - \sum_{i = 0}^{r - 1} \hat{f}_i (z)
     \partial^{r - 1 - i} \]
  where
  \[ \hat{f}_i (z) = \binom{r - 1}{i}  \frac{(m + i - 1) !}{(m - 1) !} 
     \frac{Mc^{- 1} \alpha^{i + 1}}{(1 - \alpha z)^{m + i}} . \]
  The parameters $c$, $m$, and $\alpha$ are chosen so that $(1 - \xi^{- 1}
  z)^{- 1} \ll (1 - \alpha z)^{- 1}$ for every root~$\xi$ of~$p_r$, and hence
  $p_r (z)^{- 1} \ll c^{- 1}  (1 - \alpha z)^{- m}$. Since, additionally, one
  has $(\alpha z)^j  (1 - \alpha z)^{- 1} \ll (1 - \alpha z)^{- 1}$, it
  follows that, for $0 \leqslant i < r$,
  \[ \frac{p_i (z)}{p_r (z)} \ll \frac{\sum_j \abs{p_{i, j}} z^j}{c (1 - \alpha
     z)^m} = \sum_j \abs{p_{i, j}} \alpha^{- j}  \frac{(\alpha z)^j}{c (1 -
     \alpha z)^m} \ll \frac{Mc^{- 1} \alpha^{i + 1}}{(1 - \alpha z)^m} \ll
     \hat{f}_i (z), \]
  by definition of~$M$. By Proposition~\ref{prop:maj deq}, these inequalities
  and our assumptions on~$\hat{y}$ imply that one has ${y \ll \hat{y}}$.
\end{proof}

Lemma~\ref{lem:df maj eq} applies in particular to series $y, \hat{y}$ with $P
\cdot y = 0$ and $\hat{y}' = \hat{a}  \hat{y}$. The solution~$\hat{g}$ of the
latter equation with $\hat{g}_0 = 1$ is the series
\begin{equation}
  \hat{g} (z) = \exp \int_0^z \hat{a} (w) \mathd w \label{eq:df ghat}
\end{equation}
already encountered at step~\ref{step:maj ini} of the algorithm. Observe that
none of its coefficients vanishes. Therefore, step~\ref{step:maj ini} runs
without error, and ensures that $\abs{u_n} \leqslant \hat{u}_0  \hat{g}_n$ (and
$\abs{\delta_n} \leqslant \hat{\delta}_0  \hat{g}_n$) for $n < r$. As $P \cdot u
= 0$, the lemma implies $u \ll \hat{u}_0  \hat{g}$.

\begin{remark} \label{rk:truncations}
Therefore, the tails of the series
$\hat{u} (z) = \hat{u}_0  \hat{g} (z)$ determined by
Algorithm~\ref{algo:dfsum} are majorant series of the tails of~$u (z)$. This
means that the algorithm can be modified to simultaneously bound the
truncation and rounding error, and most of the steps involved can be shared between
both bounds.
We refer the reader to~{\cite{Mezzarobba2019}} and the references therein for more on the computation of tight bounds on truncation errors.
Though our focus here is on the propagation of local errors,
the modified algorithm is the more interesting one for applications in rigorous computing, for it can serve as the basic brick of an algorithm for computing rigorous enclosures of solutions of ODEs with polynomial coefficients.
\end{remark}

Let us now turn to the loop. As usual, consider the computed coefficient
sequence~$(\tilde{u}_n)$, and let $\delta_n = \tilde{u}_n - u_n$. Write
\[ \tilde{u}_n = \dfrac{1}{b_0 (n)}  (b_1 (n)  \tilde{u}_{n - 1} + \cdots +
   b_s (n)  \tilde{u}_{n - s}) + \varepsilon_n, \qquad r \leqslant n \leqslant
   N, \]
so that $\abs{\varepsilon_n} \leqslant \tmop{rad} (\tmmathbf{u}_n)$. Let $\eta_n
= \varepsilon_n / (\abs{\tilde{u}_{n - 1}} + \cdots + \abs{\tilde{u}_{n - s}})$
when $r \leqslant n \leqslant N$ and the denominator is nonzero, and $\eta_n =
0$ otherwise. We thus have, for all $n \in \mathbb{Z}$,
\begin{equation}
  b_0 (n)  \tilde{u}_n - b_1 (n)  \tilde{u}_{n - 1} - \cdots - b_s (n) 
  \tilde{u}_{n - s} = b_0 (n) \eta_n  (\abs{\tilde{u}_{n - 1}} + \cdots + |
  \tilde{u}_{n - s} |) \label{eq:df approx rec}
\end{equation}
and, thanks to step~\ref{step:local} of the
algorithm, $\abs{\eta_n} \leqslant \bar{\eta}$. By subtracting \eqref{eq:df exact
rec} from \eqref{eq:df approx rec} and bounding~$\eta_n$ by $\bar{\eta}$, we
obtain
\begin{equation}
  \abs{b_0 (n) \delta_n - b_1 (n) \delta_{n - 1} - \cdots - b_s (n) \delta_{n -
  s}} \leqslant c \bar{\eta} Q (n)  (\abs{\tilde{u}_{n - 1}} + \cdots +
  \abs{\tilde{u}_{n - s}}) . \label{eq:df approx rec ineq}
\end{equation}
Let $\varphi (z) = z + \cdots + z^s$.

\begin{lemma}
  \label{lem:df maj ineq}Let $\hat{v} \in \mathbb{R}_{\geqslant 0} [[z]]$ be
  any majorant series of $( \varphi \, \minmaj{u} )' (z)$. The
  equation
  \begin{equation}
    (1 - c \bar{\eta} \varphi (z))  \hat{\delta}' (z) = (\hat{a} (z) + c
    \bar{\eta} \varphi' (z))  \hat{\delta} (z) + c \bar{\eta}  \hat{v} (z) .
    \label{eq:df maj deq}
  \end{equation}
  admits a solution $\hat{\delta} (z)$ with the initial value $\hat{\delta}_0$
  computed at step~\ref{step:maj ini}, and this solution is a majorant series
  of~$\delta (z)$.
\end{lemma}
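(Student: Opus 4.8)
The plan is to set up \eqref{eq:df maj deq} as the majorant counterpart of the (inhomogeneous, infinite--order) recurrence satisfied by $\delta$, and then to run the Cauchy--style induction behind Proposition~\ref{prop:maj deq} and Lemma~\ref{lem:df maj eq} by hand, the twist being that the bound on the right--hand side of \eqref{eq:df approx rec ineq} itself involves $\minmaj{\delta}$, so that \eqref{eq:df maj deq} is implicit and the comparison has to be proved coefficient by coefficient. As a first step I would rewrite \eqref{eq:df maj deq} in the equivalent form $\hat\delta'(z) = \hat a(z)\hat\delta(z) + c\bar\eta\,((\varphi\hat\delta)'(z) + \hat v(z))$. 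Since $\varphi(0)=0$, extracting the coefficient of $z^n$ expresses $(n+1)\hat\delta_{n+1}$ as a fixed $\mathbb R_{\geqslant 0}$--linear combination of $\hat\delta_0,\dots,\hat\delta_n$ plus the nonnegative term $c\bar\eta\,\hat v_n$; hence \eqref{eq:df maj deq} has a unique power series solution with the prescribed value $\hat\delta_0$, and all its coefficients are nonnegative. Comparing this recurrence with the one for $\hat g$ (the solution of $\hat g'=\hat a\hat g$ with $\hat g_0=1$) gives $\hat\delta\gg\hat\delta_0\,\hat g$, so the choice of $\hat\delta_0$ at step~\ref{step:maj ini} guarantees $|\delta_n|\leqslant\hat\delta_0\,\hat g_n\leqslant\hat\delta_n$ for $n<r$; this settles the existence claim and the initial inequalities needed later.

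Next I would turn \eqref{eq:df approx rec ineq} into a differential inequality. By Lemma~\ref{lem:recdeq} applied to $z^rP=L(z,z\partial)$, the left--hand side of \eqref{eq:df approx rec ineq} is $[z^n]\,(z^rP\cdot\delta)$. Writing $\mu_n=|\tilde u_{n-1}|+\dots+|\tilde u_{n-s}|=[z^n](\varphi\minmaj{\tilde u})$ and using $Q(X)=X(X-1)\cdots(X-r+1)$, one has $\sum_n Q(n)\mu_n z^n = z^r(\varphi\minmaj{\tilde u})^{(r)}$, a series with nonnegative coefficients. Thus \eqref{eq:df approx rec ineq} reads $z^rP\cdot\delta\ll c\bar\eta\,z^r(\varphi\minmaj{\tilde u})^{(r)}$, i.e.\ $P\cdot\delta\ll c\bar\eta\,(\varphi\minmaj{\tilde u})^{(r)}$ after dividing by $z^r$. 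Now $\minmaj{\tilde u}\ll\minmaj u+\minmaj{\delta}$ (triangle inequality on coefficients) and $(\varphi\minmaj u)'\ll\hat v$, so that $(\varphi\minmaj u)^{(r)}\ll\hat v^{(r-1)}$ and therefore
\[ P\cdot\delta \;\ll\; c\bar\eta\,( \hat v^{(r-1)} + (\varphi\minmaj{\delta})^{(r)} ) . \]

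The comparison step comes next. The series $\hat\delta$ built above, being a solution of the \emph{first--order} equation \eqref{eq:df maj deq}, satisfies $\hat P\cdot\hat\delta = \partial^{r-1}(\hat\delta'-\hat a\hat\delta) = c\bar\eta\,((\varphi\hat\delta)^{(r)}+\hat v^{(r-1)})$, where $\hat P=\partial^{r-1}(\partial-\hat a)$ is the operator of Lemma~\ref{lem:df maj eq}. If $\minmaj{\delta}\ll\hat\delta$ were already known, the displayed inequality would give $P\cdot\delta\ll\hat P\cdot\hat\delta$ and Lemma~\ref{lem:df maj eq} would conclude; to break this circularity I would unroll the induction underlying that lemma (and Proposition~\ref{prop:maj deq}). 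Concretely: normalize $P\cdot\delta=b_\delta$ by $p_r$, using $p_r(z)^{-1}\ll c^{-1}(1-\alpha z)^{-m}$ and $p_i/p_r\ll\hat f_i$ from the proof of Lemma~\ref{lem:df maj eq}, and likewise write out the recurrence defining $\hat\delta$ from $\hat P\cdot\hat\delta=c\bar\eta((\varphi\hat\delta)^{(r)}+\hat v^{(r-1)})$. Because $\varphi(0)=0$, in both recurrences the coefficient of index $n$ depends only on coefficients of index strictly less than $n$ and on the data $\hat v$; so a strong induction on $n$, with base case supplied by the first step, yields $|\delta_n|\leqslant\hat\delta_n$ for every $n$, that is, $\delta\ll\hat\delta$.

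The crux is this last induction. One must check, term by term, that the recurrence governing $\hat\delta$ genuinely dominates the one governing $\delta$ once the inductive hypothesis $|\delta_k|\leqslant\hat\delta_k$ ($k<n$) is used to bound the implicit occurrences of $\delta$ in the right--hand side of \eqref{eq:df approx rec ineq} (through $\minmaj{\tilde u}$), and that the various constants — $c$, $M$, $\alpha$, the factor $c^{-1}(1-\alpha z)^{-m}$ majorizing $p_r^{-1}$, and the binomial factors hidden in the $\hat f_i$ — line up exactly as in the proof of Lemma~\ref{lem:df maj eq}. This bookkeeping is where the quantitative content of the statement resides; everything else is formal manipulation of majorant series.
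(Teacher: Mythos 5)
Your route differs from the paper's. The paper avoids the circularity you identify not by a bare induction, but by introducing an auxiliary series $\hat\gamma$ defined as the solution of $(\partial-\hat a)\cdot\hat\gamma = c\bar\eta\,\partial\varphi\cdot\minmaj{\tilde u}$ with $\hat\gamma_0=\hat\delta_0$. The key point is that the right-hand side involves only the \emph{computed} sequence $(\tilde u_n)$, which is fixed data, so $\hat\gamma$ is determined with no self-reference whatsoever. Lemma~\ref{lem:df maj eq} then gives $\delta\ll\hat\gamma$ in one stroke; next, writing $\tilde u=u+\delta$ and using $\delta\ll\hat\gamma$ converts the defining equation of $\hat\gamma$ into the first-order differential inequality $\hat\gamma'\ll c\bar\eta\varphi\hat\gamma'+(\hat a+c\bar\eta\varphi')\hat\gamma+c\bar\eta\hat v$, and Lemma~\ref{lem:maj diff ineq} yields $\hat\gamma\ll\hat\delta$, whence $\delta\ll\hat\gamma\ll\hat\delta$. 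Your plan instead merges these two stages into a single coefficient-by-coefficient strong induction. The idea is sound --- you correctly note that $\varphi(0)=0$ makes the recurrences well-founded, that the $r$th-order inequality on $\delta$ (after normalizing by $p_r$) should be compared to $\hat P\cdot\hat\delta=c\bar\eta\bigl((\varphi\hat\delta)^{(r)}+\hat v^{(r-1)}\bigr)$, and that at stage $n$ both sides depend on $\delta_j,\hat\delta_j$ only for $j<n$ --- but you explicitly leave the inductive comparison unchecked, and that comparison is precisely the content of the Cauchy-style inductions underlying Proposition~\ref{prop:maj deq} and Lemma~\ref{lem:maj diff ineq}. In effect you are proposing to re-prove those two lemmas inline; the paper's $\hat\gamma$ lets it invoke them as black boxes, which is shorter and less error-prone. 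If you actually carried out the coefficient-level comparison (including the $p_i/p_r\ll\hat f_i$ bookkeeping from the proof of Lemma~\ref{lem:df maj eq}), your argument would be a valid, more self-contained alternative; as written, that step is a genuine gap.
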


\begin{proof}
  In terms of generating series, \eqref{eq:df approx rec ineq} rewrites as $z^r
  P \cdot \delta (z) \ll c \bar{\eta} Q (z \partial) \varphi (z) \cdot
  \minmaj{\tilde{u} (z)}$. As already observed in the proof of
  Lemma~\ref{lem:df indpol}, one has $Q (z \partial) = z^r \partial^r$, so
  that the previous equation is equivalent to
  \begin{equation}
    P \cdot \delta (z) \ll c \bar{\eta} \partial^r \varphi (z) \cdot
    \minmaj{\tilde{u} (z)} . \label{eq:df diff ineq}
  \end{equation}
  Let $\hat{\gamma}$ be the solution of
  \begin{equation}
    (\partial - \hat{a} (z)) \cdot \hat{\gamma} (z) = c \bar{\eta} \partial
    \varphi (z) \cdot \minmaj{\tilde{u} (z)} \label{eq:df maj eq1}
  \end{equation}
  with $\hat{\gamma}_0 = \hat{\delta}_0$. By Proposition~\ref{prop:maj deq},
  we have $\hat{\delta}_0  \hat{g} (z) \ll \hat{\gamma} (z)$ where
  $\hat{g}$~is given by~\eqref{eq:df ghat}. In addition, as noted when
  discussing step~\ref{step:maj ini}, we have $\abs{\delta_n} \leqslant
  \hat\delta_0  \hat{g}_n$ for $n < r$, hence $\abs{\delta_n} \leqslant
  \hat{\gamma}_n$ for $n < r$. As $\hat{\gamma}$ also satisfies $P \cdot
  \delta \ll \partial^r  (\partial - \hat{a}) \cdot \hat{\gamma}$, we can
  conclude that $\delta \ll \hat{\gamma}$ using Lemma~\ref{lem:df maj eq}. But
  then, since $\tilde{u} = u + \delta$, we have $\minmaj{\tilde{u} \ll
  \minmaj{u} + \hat{\gamma}}$, hence $\left( \varphi \minmaj{\tilde{u}}
  \right)' \ll \hat{v} + (\varphi \hat{\gamma})'$, and \eqref{eq:df maj
  eq1}~implies
  \[ \hat{\gamma}' = \hat{a}  \hat{\gamma} + c \bar{\eta}  \left( \varphi
     \minmaj{\tilde{u}} \right)' \ll c \bar{\eta} \varphi \hat{\gamma}' +
     (\hat{a} + c \bar{\eta} \varphi')  \hat{\gamma} + c \bar{\eta}  \hat{v}
  \]
  where we note that $\varphi (0) = 0$. This inequality is of the form
  required by Lemma~\ref{lem:maj diff ineq}, which yields the existence of
  $\hat{\delta}$ and the inequality $\hat{\gamma} \ll \hat{\delta}$. We thus
  have $\delta \ll {\hat{\gamma} \ll \hat{\delta}}$.
\end{proof}

It remains to solve the majorant equation~\eqref{eq:df maj deq} to get an
explicit bound on~$\hat{\delta}$.

\begin{proposition}
  \label{prop:df maj series}The generating series~$\delta (z)$ of the global
  error on $u_n$ committed by Algorithm~\ref{algo:dfsum} satisfies
  \begin{equation}
    \delta (z) \ll \frac{\hat{\delta}_0 + c \bar{\eta}  \hat{u}_0 \varphi (z) 
    (1 + z \hat{a} (z))}{1 - c \bar{\eta} \varphi (z)} \exp \left( \frac{z
    \hat{a} (z)}{1 - c \bar{\eta} \varphi (z)}  \right), \quad \hat{a} (z) =
    \frac{Mc^{- 1} \alpha}{(1 - \alpha z)^m} . \label{eq:df final maj}
  \end{equation}
\end{proposition}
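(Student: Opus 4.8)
The plan is to apply Lemma~\ref{lem:df maj ineq} with the explicit choice $\hat{v} = \hat{u}_0\,(\varphi\hat{g})'$ and then to bound the resulting solution~$\hat{\delta}$ of the majorant equation~\eqref{eq:df maj deq} by the right-hand side of~\eqref{eq:df final maj}, which I will denote $\hat{D}(z)$. That $\hat{v}$ is a legitimate majorant of~$(\varphi\minmaj{u})'$ follows from the inequality $u \ll \hat{u}_0\hat{g}$ established after Lemma~\ref{lem:df maj eq}: it gives $\minmaj{u} \ll \hat{u}_0\hat{g}$, whence $(\varphi\minmaj{u})' \ll \hat{u}_0(\varphi\hat{g})' \in \mathbb{R}_{\geqslant 0}[[z]]$ by Lemma~\ref{lem:maj-series}(d),(f). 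Lemma~\ref{lem:df maj ineq} then yields $\delta \ll \hat{\delta}$, and it remains to prove $\hat{\delta} \ll \hat{D}$.

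Writing $\psi = 1 - c\bar{\eta}\varphi$, $P = \hat{\delta}_0 + c\bar{\eta}\hat{u}_0\,\varphi\,(1 + z\hat{a})$ and $g = z\hat{a}/\psi$, so that $\hat{D} = \psi^{-1}Pe^{g}$, I would first record that $\hat{D}$ has nonnegative coefficients (as $\psi^{-1}$, $P$ and $e^{g}$ do) and that $\hat{D}(0) = \hat{\delta}_0 = \hat{\delta}(0)$ since $\varphi(0) = 0$. The heart of the argument is to check that $\hat{D}$ \emph{over}-satisfies~\eqref{eq:df maj deq}, i.e.\ that $(1 - c\bar{\eta}\varphi)\,\hat{D}' - (\hat{a} + c\bar{\eta}\varphi')\,\hat{D} \gg c\bar{\eta}\,\hat{v}$. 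I would get this by logarithmic differentiation: $\psi\hat{D}' = \hat{D}\,(\psi P'/P + c\bar{\eta}\varphi' + \psi g')$ with $\psi g' = \hat{a} + z\hat{a}' + z\hat{a}\,c\bar{\eta}\varphi'/\psi$, so that
\[ (1 - c\bar{\eta}\varphi)\,\hat{D}' - (\hat{a} + c\bar{\eta}\varphi')\,\hat{D} = P'e^{g} + \frac{Pe^{g}}{\psi}\Bigl(z\hat{a}' + \frac{z\hat{a}\,c\bar{\eta}\varphi'}{\psi}\Bigr) \gg P'e^{g}. \]
Then $P' = c\bar{\eta}\hat{u}_0\bigl(\varphi'(1 + z\hat{a}) + \varphi(\hat{a} + z\hat{a}')\bigr) \gg c\bar{\eta}\hat{u}_0(\varphi' + \varphi\hat{a})$, while Lemma~\ref{lem:ipp} gives $\int_0^z\hat{a} \ll z\hat{a} \ll z\hat{a}/\psi = g$, hence $\hat{g} = \exp\int_0^z\hat{a} \ll e^{g}$ by item~2 of Lemma~\ref{lem:maj-series}; combining these, $P'e^{g} \gg c\bar{\eta}\hat{u}_0(\varphi' + \varphi\hat{a})\hat{g} = c\bar{\eta}\hat{u}_0(\varphi\hat{g})' = c\bar{\eta}\hat{v}$.

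To conclude, this inequality means I may write $(1 - c\bar{\eta}\varphi)\,\hat{D}' - (\hat{a} + c\bar{\eta}\varphi')\,\hat{D} = c\bar{\eta}\hat{v} + \hat{e}$ with $\hat{e} \gg 0$, i.e.\ $\hat{D}$ is the unique solution with $\hat{D}(0) = \hat{\delta}(0)$ of $\hat{D}' = c\bar{\eta}\varphi\,\hat{D}' + (\hat{a} + c\bar{\eta}\varphi')\,\hat{D} + \hat{b}$, where $\hat{b} = c\bar{\eta}\hat{v} + \hat{e} \gg c\bar{\eta}\hat{v}$ has nonnegative coefficients; whereas $\hat{\delta}$ satisfies $\hat{\delta}' = c\bar{\eta}\varphi\,\hat{\delta}' + (\hat{a} + c\bar{\eta}\varphi')\,\hat{\delta} + c\bar{\eta}\hat{v}$, and \emph{a fortiori} $\hat{\delta}' \ll c\bar{\eta}\varphi\,\hat{\delta}' + (\hat{a} + c\bar{\eta}\varphi')\,\hat{\delta} + \hat{b}$. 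Since $c\bar{\eta}\varphi$ vanishes at the origin and every series in sight has nonnegative coefficients, Lemma~\ref{lem:maj diff ineq} applies and gives $\hat{\delta} \ll \hat{D}$, hence $\delta \ll \hat{D}$, which is~\eqref{eq:df final maj}.

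The only real work is the calculus in the middle paragraph: nothing subtle happens, but one must keep every quantity in $\mathbb{R}_{\geqslant 0}[[z]]$ and be careful about which non-negative terms are dropped. Conceptually, \eqref{eq:df final maj} is just the exact solution of the linear first-order equation~\eqref{eq:df maj deq}—a homogeneous part $\psi^{-1}\exp\int_0^z(\hat{a}/\psi)$ plus a particular part—in which the antiderivatives appearing in its homogeneous and particular parts have been coarsened to $z\hat{a}/\psi$ and $z\hat{a}$ via Lemma~\ref{lem:ipp}; rather than carrying those antiderivatives around, I verify the final bound a posteriori through Lemma~\ref{lem:maj diff ineq}.
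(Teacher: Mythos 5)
Your proof is correct, but it takes a genuinely different route to the formula than the paper does. The paper also picks a $\hat{v}$ for Lemma~\ref{lem:df maj ineq} (actually a slightly larger one, $\hat{u}_0(\varphi'+\varphi\hat{a})\hat{h}$, chosen so that $\hat{v}/\hat{h}$ is trivial), but then solves~\eqref{eq:df maj deq} exactly by variation of parameters, obtaining
\[
  \hat{\delta}(z)=\hat{h}(z)\Bigl(\hat{\delta}_0+c\bar{\eta}\hat{u}_0\Bigl(\varphi(z)+\int_0^z\varphi\hat{a}\Bigr)\Bigr),\qquad
  \hat{h}(z)=\frac{1}{1-c\bar{\eta}\varphi(z)}\exp\!\int_0^z\frac{\hat{a}}{1-c\bar{\eta}\varphi},
\]
and then applies Lemma~\ref{lem:ipp} to the two remaining integrals to reach~\eqref{eq:df final maj}. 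You instead take the candidate bound $\hat D$ as given, confirm by logarithmic differentiation that it \emph{over}-satisfies~\eqref{eq:df maj deq} in the $\gg$ sense, and close with a second invocation of Lemma~\ref{lem:maj diff ineq}. The paper's route explains where the expression comes from; yours is a self-contained a posteriori certificate that does not need the closed-form solution of the linear ODE or the variation-of-parameters step. Both are valid, and the inequality chain $\int_0^z\hat a\ll z\hat a\ll g$, $\hat g\ll e^g$, $P'\gg c\bar\eta\hat u_0(\varphi'+\varphi\hat a)$ in your middle paragraph is exactly the right bookkeeping. One small point worth making explicit: when you invoke Lemma~\ref{lem:maj diff ineq} with $y=\hat\delta$, the hypothesis $y\gg 0$ holds because $\hat\delta$ is defined by a recurrence (derived from~\eqref{eq:df maj deq}) with nonnegative coefficients and a nonnegative initial value $\hat\delta_0$; it is implicit but should be noted.
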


\begin{proof}
  The solution~$\hat{h} (z)$ of the homogeneous part of~\eqref{eq:df maj deq}
  with $\hat{h}_0 = 1$ is given by
  \[ \hat{h} (z) = \frac{1}{1 - c \bar{\eta} \varphi (z)} \exp \int_0^z
     \frac{\hat{a} (w)}{1 - c \bar{\eta} \varphi (w)} \mathd w. \]
  Observe that
  \[ \left( \varphi \minmaj{u} \right)' \ll \hat{u}_0  (\varphi \hat{g})' =
     \hat{u}_0  (\varphi' + \varphi \hat{a})  \hat{g} \ll \hat{u}_0  (\varphi'
     + \varphi \hat{a})  \hat{h}, \]
  so that, in Lemma~\ref{lem:df maj ineq}, we can take $\hat{v} (z) =
  \hat{u}_0  (\varphi' + \varphi \hat{a})  \hat{h}$. The method of variation
  of parameters then leads to the expression
  \[ \hat{\delta} (z) = \hat{h} (z)  \left( \hat{\delta}_0 + c \bar{\eta}
     \int_0^z \frac{\hat{v} (w)}{\hat{h} (w)} \mathd w \right) = \hat{h} (z) 
     \left( \hat{\delta}_0 + c \bar{\eta}  \hat{u}_0  \left( \varphi (z) +
     \int_0^z \varphi (w)  \hat{a} (w) \mathd w \right) \right) . \]
  Using the bounds from Lemma~\ref{lem:ipp}
  \[ \int_0^z \frac{\hat{a} (w)}{1 - c \bar{\eta} \varphi (w)} \mathd w \ll
     \frac{z \hat{a} (z)}{1 - c \bar{\eta} \varphi (z)}, \qquad \int_0^z
     \varphi (w)  \hat{a} (w) \mathd w \ll z \varphi (z)  \hat{a} (z), \]
  we see that $\hat{\delta} (z)$
  is bounded by the right-hand side of~\eqref{eq:df final maj}.
  Note in passing that $z \hat{a} (z)$ could be replaced by $\int_0^z \hat{a}$ at the price
  of a slightly more complicated final bound.
\end{proof}

Step~\ref{step:add-error} of Algorithm~\ref{algo:dfsum} effectively computes
an upper bound on $\abs{\delta (\zeta)}$ using inequality~\eqref{eq:df final maj}. It follows
that the returned interval~$\tmmathbf{s}_N$ contains the exact partial sum
$\sum_{n = 0}^{N - 1} u_n \zeta^n$ corresponding to the input data, as stated
in the specification of the algorithm.
This concludes the proof of correctness of Algorithm~\ref{algo:dfsum}.

\begin{remark}
  Instead of $\varepsilon_n / (\abs{\tilde{u}_{n - 1}} + \cdots + \abs{\tilde{u}_{n
  - s}})$, one could compute a run-time bound
  directly on $\abs{\varepsilon_n / \hat{h}_n}$. Doing so leads to a somewhat
  simpler variant of the above analysis. We chose to present the version given
  here because it is closer to plain floating-point error analysis --- and
  gives us an excuse to illustrate the generalization to recurrences with
  polynomial coefficients of the technique of Section~\ref{sec:toy rel}.
  Another small advantage is that plugging in a sharper first-order majorant
  equation as suggested below requires no other change
  to the algorithm, whereas it may not be obvious how to compute a good lower
  bound on~$\hat{h}_n$.
\end{remark}

It is natural to ask how this algorithm compares to naive interval summation.
We limit ourselves to a short informal discussion.
When the operator~$P$, the series~$u (z)$ and the evaluation point~$\zeta$
are fixed, our bound~$\Delta_N$ on the global error decreases linearly with
$\hat{\delta}_0 + \bar{\eta}$. Suppose that we run the algorithm with a
relative working precision of $t$~bits. Under the reasonable assumptions
that $\tmop{rad} (\tmmathbf{u}_n) = \Omicron (2^{- t})$ for $0 \leqslant n <
r$ and that both $\eta_n$ and $\tmop{rad} (\tmmathbf{s}_n)$
are\footnote{Such a growth for~$\bar{\eta}$ is reasonable since the
coefficients~$b_i$ of the recurrence are polynomials.
Regarding~$\tmmathbf{s}_n$, we can in fact expect to have $\tmop{rad}
(\tmmathbf{t}_n) / \zeta^n = \Omicron (n 2^{- t})$, and, since $u_n \zeta^n$
converges geometrically to zero, \ $\tmop{rad} (\tmmathbf{s}_n) \approx
\tmop{rad} (\tmmathbf{s}_{n - 1}) + \Omicron (nu_n \zeta^n 2^{- t}) +
\Omicron (2^{- t})$, leading to $\tmop{rad} (\tmmathbf{s}_n) = \Omicron (n
2^{- t})$.} $\Omicron (n^d 2^{- t})$ for some~$d$, we then have $\Delta_N =
\Omicron (N^d 2^{- t})$. As the truncation order~$N$ necessary for reaching
an accuracy $\abs{\tmmathbf{s}_N - u (\zeta)} \leqslant 2^{- q}$ is $N =
\Omicron (q)$, this means that, for fixed $u$ and $\zeta$, the algorithm
needs no more than $q + \Omicron (\log q)$ bits of working precision to
compute an enclosure of $u (\zeta)$ of width $2^{- q}$.
In the same setting, computing the partial sum purely in ball arithmetic
(that is, without Step~\ref{step:round} of Algorithm~\ref{algo:dfsum})
may require a working precision of the order of $q + \lambda N$ bits,
for some~$\lambda$ depending on the recurrence.

The majorant series of Proposition~\ref{prop:df maj
series} was chosen to keep the algorithm simple, not to optimize the error
bound, so that we do not expect the version described here to be practical.
One helpful feature it does have is that the parameter $\alpha$ can be taken
arbitrarily close to~$\rho (p_r)^{- 1}$ without forcing other parts of the
bound to tend to infinity. (This is in contrast with the geometric majorant
series typically found in textbook proofs of theorems on differential
equations.) Nevertheless, the exponential factor in~\eqref{eq:df final rad}
can easily grow extremely large, and we expect Algorithm~\ref{algo:dfsum} to
lead to unusable bounds in practice on moderately complicated examples. Even
in simple cases, forcing a majorant series of finite radius of convergence
when $p_r$ is constant is far from optimal.

The same idea, though, can be used with a more sophisticated choice of
majorant series. In particular, the algorithm adapts without difficulty if
$\hat{a} (z)$ is a sharper rational majorant of the coefficients of the
equation.
As a first step toward making the algorithm practical,
we have implemented a variant based
on the more flexible framework of~{\cite{Mezzarobba2019}}, in the
ore\_algebra
package~{\cite{KauersJaroschekJohansson2015,Mezzarobba2016}} for SageMath.
Preliminary experiments suggest that, in some cases, it is very effective in
reducing the working precision necessary for computing enclosures of solutions
of differential equations with polynomial coefficients.
At this stage, though, it does not consistently run faster than naive interval
summation, due both to overestimation issues and to the computational overhead
of obtaining good bounds.
We leave it to future work to develop an efficient Taylor method for solving
linear ODEs with polynomial incorporating the technique presented in this section.
It would also be interesting to extend the analysis to the
computation of ``logarithmic series'' solutions of linear ODEs at regular
singular points.

\section*{Acknowledgments}

This work benefited from remarks by many people, including Alin Bostan,
Richard Brent, Thibault Hilaire, Philippe Langlois, and
Nicolas Louvet. The initial impulse came from discussions with Fredrik
Johansson, Guillaume Melquiond, and Paul Zimmermann. Frédéric Chyzak
suggested to work with elements of $\Omega [[t]]$ instead of $\mathbb{R}
[x^{\pm 1}] [[t]]$ in Section~\ref{sec:wave}. I am especially grateful to
Guillaume Melquiond, Anne Vaugon, and two anonymous referees for many
insightful comments,
and to Paul Zimmermann for his thorough reading of a preliminary
version.

\printbibliography

\end{document}